\documentclass[10pt]{amsart}
\usepackage[utf8]{inputenc} 
\usepackage[T1]{fontenc}
\usepackage{amsfonts,amscd}
\usepackage[a4paper, margin=1.25in]{geometry}
\usepackage[parfill]{parskip}
\usepackage{xcolor}
\usepackage{comment}
\usepackage{amssymb,latexsym,amsmath,amsthm,enumitem,mathrsfs}
\usepackage{fullpage}
\usepackage{fancyhdr}
\usepackage{hyperref}
\usepackage{diagbox}
\usepackage{array}
\usepackage{tikz}
\usetikzlibrary{arrows}
\usetikzlibrary{positioning}
\usepackage{float}\usepackage{mathtools} 
\usepackage{dsfont}
\usepackage[titletoc,title]{appendix}
\newtheorem{theorem}{Theorem}
\newtheorem{defn}[theorem]{Definition}

\newtheorem{lemma}[theorem]{Lemma}

\mathtoolsset{showonlyrefs}

\allowdisplaybreaks
\usepackage{graphicx} 

\title{Moments and sign changes of symmetric power $L$-function coefficients over sums of squares}
\author{Jewel Mahajan}
\address{(Jewel Mahajan) Department of Mathematical Sciences, Indian Institute of Science Education and Research (IISER) Berhampur, Ganjam, Odisha 760003, India}
\email{jewelmahajan421@gmail.com}
\author{Arnab Mitra}
\address{(Arnab Mitra) School of Mathematical Sciences, National Institute of Science Education and Research,  An OCC of Homi Bhabha National Institute, Bhubaneswar, Via: Jatni, Khurda, Odisha-752050, India}
\email{arnab.mitra@niser.ac.in}
\date{June 2026}
\keywords{Dirichlet series, symmetric power $L$-function, sum of squares}
\subjclass[2020]{11M36, 11M41, 11M06}

\begin{document}

\begin{abstract}
Let $f$ be a normalised Hecke eigenform of even integral weight for the full modular group $\mathrm{SL}(2,\mathbb{Z})$, let $L(s,\mathrm{sym}^{j}f)$ be the $j$th symmetric power $L$-function attached to $f$, and let $\lambda_{\mathrm{sym}^{j}f}(n)$ denote its $n$th Dirichlet coefficient. For each even integer $m$ with $2 \le m \le 12$, we establish upper bounds for the partial sums of $\lambda_{\mathrm{sym}^{j}f}(n)$ and asymptotic formulas for those of $\lambda_{\mathrm{sym}^{j}f}^{2}(n)$ taken over integers represented as a sum of $m$ squares. As an application, we obtain lower bounds for the number of sign changes of $\lambda_{\mathrm{sym}^{j}f}(n)$ along these sums of $m$ squares.
\end{abstract}

\maketitle

\section{Introduction}

Let $k$ be a positive even integer. Let \( H_k \) be the space of all normalised Hecke eigencusp forms of weight $k$ for the full modular group \(\mathrm{SL}(2,\mathbb{Z})\). Let $f \in H_k $ and \( \lambda_{f}(n) \) be the \( n \)-th normalised Fourier coefficient in the Fourier expansion of \( f(z) \) at the cusp \( \infty \), that is,
\[
f(z)=\sum_{n=1}^{\infty}\lambda_{f}(n)\,n^{\frac{k-1}{2}}e^{2\pi i n z},
\qquad \Im(z)>0,
\]
with \(\lambda_{f}(1)=1\).
The \( L \)-function attached to \( f \) (denoted \( L(s,f) \)) is defined as
\[
L(s,f)=\sum_{n=1}^{\infty}\frac{\lambda_{f}(n)}{n^{s}},
\qquad \Re(s)>1.
\]

In 1974, Deligne~\cite{deligne1974} proved that for any prime \( p \), there exist two complex numbers \( \alpha_{f}(p) \) and \( \beta_{f}(p) \) (known as Satake parameters) such that
\begin{align}\label{eq:deligne1}
\alpha_{f}(p)\beta_{f}(p) = 1,  
\end{align}
\begin{align}\label{eq:deligne2}
|\alpha_{f}(p)| = |\beta_{f}(p)| = 1,  
\end{align}
and
\begin{align}\label{eq:deligne3}
\lambda_{f}(p)=\alpha_{f}(p)+\beta_{f}(p). 
\end{align}
Since \( f \) is primitive, the coefficients \( \lambda_f(n) \) are multiplicative and satisfy the Hecke recurrence
\begin{equation}\label{recurrence}
\lambda_f(p^{\,r+1})=\lambda_f(p)\,\lambda_f(p^{\,r})-\lambda_f(p^{\,r-1}),
\qquad r\ge 1.
\end{equation}
Consequently, for \( \Re(s)>1 \), we have the Euler product
\begin{equation}\label{eulerproduct}
L(s,f)=\prod_{p}\; \sum_{r=0}^{\infty}\frac{\lambda_f(p^{\,r})}{p^{\,rs}} .
\end{equation}

Using induction and the identities \( \lambda_f(p)=\alpha_f(p)+\beta_f(p) \) and \( \alpha_f(p)\beta_f(p)=1 \), together with \eqref{recurrence}, one obtains
\begin{equation}\label{eq:closedform}
\lambda_f(p^{\,r}) 
= \frac{\alpha_f(p)^{\,r+1}-\beta_f(p)^{\,r+1}}{\alpha_f(p)-\beta_f(p)}
= \alpha_f(p)^{\,r}+\alpha_f(p)^{\,r-2}+\cdots+\alpha_f(p)^{-r}
\end{equation}
for all integers \( r\ge 1 \). From this follows Deligne's inequality
\begin{equation}\label{Deligne bound}
|\lambda_{f}(n)|\leq d(n),    
\end{equation}
where \( d(n) \) is the divisor function.

Now using the multiplicativity of \( \lambda_{f}(n) \) and formula \eqref{eq:closedform}, the Euler product \eqref{eulerproduct} can be rewritten as
\begin{equation}\label{Lfactorization}
L(s,f)=\prod_{p}\,
\biggl(1-\frac{\alpha_{f}(p)}{p^{s}}\biggr)^{\!-1}
\biggl(1-\frac{\beta_{f}(p)}{p^{s}}\biggr)^{\!-1}
\end{equation}
in \(\Re(s)>1\), where the product runs over all primes \( p \).

The $j$th symmetric power $L$-function associated to each $f\in H_k $ is defined as
\[
L(s,\mathrm{sym}^{j}f):=\sum_{n=1}^{\infty}\frac{\lambda_{\mathrm{sym}^{j}f}(n)}{n^{s}}=\prod_{p}\prod_{m=0}^{j}\left(1-\frac{\alpha_f(p)^{m}\beta_f(p)^{j-m}}{p^s}\right)^{-1}
\]
for $\Re(s)>1$, where $\lambda_{\mathrm{sym}^{j}f}(n)$ is multiplicative. From the multiplicativity, \eqref{eq:deligne1} and \eqref{eq:deligne2}, we have \begin{align}
    \lambda_{\mathrm{sym}^jf}(n) \ll n^{\epsilon}.
\end{align}

Several authors have studied the average behaviour of these Fourier coefficients. In 2004, Fomenko~\cite{Fomenko2004} was able to prove some results for the symmetric square $L$-functions. He showed that
\[
\sum_{n\leq x}\lambda_{\mathrm{sym}^{2}f}(n)\ll x^{\frac{1}{2}}\log^{2}x.
\]



In 2013, Zhai~\cite{Zhai} proved an asymptotic formula for the $l$th
power sum 
\[
\sum_{\substack{a^{2}+b^{2}\leq x \\ (a,b)\in\mathbb{Z}^{2}}}\lambda_{f}^{l}(a^{2}+b^{2})
\]
for $l \in [2,8]\cap \mathbb{Z}$ and $x\geq 1$. For results related to mean square and higher moments of the coefficients of symmetric square $L$-functions on a certain sequence of positive integers, see~\cite{SharmaSankaranarayanan2023B, SharmaSankaranarayanan2022}.

In~\cite{Sharma2022}, Sharma and Sankaranarayanan established the formulas for the Dirichlet coefficients of the symmetric square $L$-functions over sums of four squares, that is,
\begin{align*}
\sum_{\substack{a_{1}^{2}+\cdots+a_{4}^{2}\leq x \\ (a_{1},\ldots,a_{4})\in\mathbb{Z}^{4}}}\lambda^{2}_{\mathrm{sym}^{2}f}(a_{1}^{2}+\cdots+a_{4}^{2}) = cx^2+O(x^{\frac{9}{5}+\epsilon})
\end{align*}
for any $\epsilon > 0$, where $c $ is an effective constant (see~\cite[Theorem~1.1]{Sharma2022}).

In~\cite{SharmaSankaranarayanan2023}, Sharma and Sankaranarayanan proved that 
\begin{align*}
\sum_{\substack{a_{1}^{2}+\cdots+a_{6}^{2}\leq x \\ (a_{1},\ldots,a_{6})\in\mathbb{Z}^{6}}}\lambda^{2}_{\mathrm{sym}^{j}f}(a_{1}^{2}+\cdots+a_{6}^{2}) = c(j)x^3+O(x^{3-\frac{6}{3(j+1)^2+1}+\epsilon}),    
\end{align*}
where $$c(j) = \frac{16}{3}L(3,\chi_4)\prod_{n=1}^{j}L(1,\mathrm{sym}^{2n}f)L(3,\mathrm{sym}^{2n}f \otimes \chi_4)H_j(3),$$ and $\chi_4$ is the non-principal Dirichlet character modulo $4$. 
Here \[L(s,\mathrm{sym}^{2n}f\otimes\chi_{4}) = \sum_{m=1}^{\infty} \frac{\lambda_{\mathrm{sym}^{2n}f}(m)\chi_4(m)}{m^s}\] and $H_j(s)$ is a Dirichlet series, which is absolutely convergent at $s=3$. The explicit expression for $H_j(s)$  is mentioned in~\cite{SharmaSankaranarayanan2023}.

The best known result for the estimate for the sum of six squares is proved by Liu and Yang~\cite{Liu2024}

\begin{align*}
\sum_{\substack{a_{1}^{2}+\cdots+a_{6}^{2}\leq x \\ (a_{1},\ldots,a_{6})\in\mathbb{Z}^{6}}}\lambda^{2}_{\mathrm{sym}^{j}f}(a_{1}^{2}+\cdots+a_{6}^{2}) = c(j)x^3+O(x^{3-\frac{210}{105(j+1)^2-103}+\epsilon}),  
\end{align*}

where $c(j)$ is a nonzero constant.

In~\cite{Junjie2026}, Wang and Wang proved that

\begin{align}
    \sum_{\substack{a_{1}^{2}+\cdots+a_{8}^{2}\leq x \\ (a_{1},\ldots,a_{8})\in\mathbb{Z}^{8}}}\lambda_{\mathrm{sym}^{j}f}(a_{1}^{2}+\cdots+a_{8}^{2}) = \begin{cases}
        O(x^{4-\frac{2}{2j+3}+\epsilon}) & \text{ if } j \geq 3 \\
        O(x^{\frac{25}{7}+\epsilon}) & \text{ if } j=2
    \end{cases}   
\end{align}

\begin{align*}
\sum_{\substack{a_{1}^{2}+\cdots+a_{8}^{2}\leq x \\ (a_{1},\ldots,a_{8})\in\mathbb{Z}^{8}}}\lambda^{2}_{\mathrm{sym}^{j}f}(a_{1}^{2}+\cdots+a_{8}^{2}) = c(j)x^4+\begin{cases}
    O(x^{4-\frac{168}{84(j+1)^2-95}+\epsilon}) & \text{ if } j \geq 3 \\
    O(x^{\frac{654}{174}+\epsilon}) & \text{ if } j =2
\end{cases}    
\end{align*}
where $$c(j) = 4\zeta(4)\prod_{n=1}^{j}L(4, \mathrm{sym}^{2n}f)^{16}L(1, \mathrm{sym}^{2n}f)^{16}H_j(4),$$
where $H_j(4)$ is a Dirichlet series. 

On the other hand, the study of sign changes in Fourier coefficients is another interesting topic in analytic number theory.  

In 2013, Meher, Shankhadhar and Viswanadham~\cite{Jaban2013} examined the sign changes of $\{a(n^j)\}$ for $ j\in\{ 2,3,4\}$, where $a(n)$ is the $n$th Fourier coefficient of normalised Hecke eigencusp forms for the full modular group. In 2014, Meher and Murty examined the sign changes of Fourier coefficients of half-integral weight cusp forms in~\cite{Jaban2014_1} and Meher and Tanabe examined the sign changes of Fourier coefficients of Hilbert modular forms in~\cite{Jaban2014}.  In 2025, Kaur and Saha~\cite{Kaur2025} examined the number of sign changes of Fourier coefficients of $SL_2(\mathbb{Z})$ Hecke–Maass forms at sum of two squares. 

In 2026, Kaur~\cite{Kaur2026} proved that for any $j \geq 2$, $x$ sufficiently large, the number of sign changes in $\lambda_{\mathrm{sym}^jf}(n)$ over the sums of two squares and between $x$ and $2x$, is at least $x^{1-\delta_j}$, where $$\frac{21j^2+42j+19}{21j^2+42j+40}< \delta_j < 1.$$

We note that the number of representations of a natural number as a sum of $k$ squares, where $k$ varies in $[2,12] \cap 2\mathbb{Z}$, can be written as a linear combination of divisor functions, twisted with the non-principal Dirichlet character modulo $4$. Using this, we examine an asymptotic formula for the average behaviour of $\lambda_{\mathrm{sym}^jf}(n)$ and $\lambda_{\mathrm{sym}^jf}^2(n)$, over sum of $k$ squares, where $k \in [2,12] \cap 2\mathbb{Z}$. In particular, we prove the following results. 
\begin{theorem} \label{thm:main(0)}
    Let $f \in H_k$, $j \geq 2$ be a fixed integer. Then for any $\epsilon > 0$, we have $$\sum_{n \leq x}\lambda_{\mathrm{sym}^jf}(n) = O\left( x^{1-\frac{2}{j+3}+\epsilon} \right).$$
\end{theorem}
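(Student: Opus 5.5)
We prove the bound by the standard contour-integration argument, combining Perron's formula with a convexity bound for $L(s,\mathrm{sym}^jf)$. We use the known analytic properties of the symmetric power $L$-functions (Newton--Thorne automorphy for $\mathrm{SL}(2,\mathbb{Z})$ forms): $L(s,\mathrm{sym}^jf)$ is the $L$-function of a cuspidal automorphic representation of $\mathrm{GL}_{j+1}$. Hence it extends to an entire function, with no pole at $s=1$ for $j\ge1$. It also satisfies a functional equation of degree $j+1$ with gamma factors of total degree $j+1$.

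\textbf{Step 1 (convexity).} The Phragm\'en--Lindel\"of principle, applied between $\Re(s)=1+\epsilon$ (where the Dirichlet series is bounded) and $\Re(s)=-\epsilon$ (bounded via the functional equation by $|t|^{(j+1)(1/2+\epsilon)}$), gives
\[
L(\sigma+it,\mathrm{sym}^jf)\ll (1+|t|)^{\frac{(j+1)(1-\sigma)}{2}+\epsilon},\qquad 0\le\sigma\le 1 .
\]

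\textbf{Step 2 (Perron).} Since $\lambda_{\mathrm{sym}^jf}(n)\ll n^{\epsilon}$, the truncated Perron formula with $b=1+\epsilon$ gives
\[
\sum_{n\le x}\lambda_{\mathrm{sym}^jf}(n)=\frac{1}{2\pi i}\int_{b-iT}^{b+iT}L(s,\mathrm{sym}^jf)\frac{x^s}{s}\,ds+O\Bigl(\frac{x^{1+\epsilon}}{T}\Bigr)
\]
for a parameter $2\le T\le x$ to be chosen.

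\textbf{Step 3 (contour shift).} Move the line of integration to $\Re(s)=\sigma_0$ with $\sigma_0=\epsilon$ (or slightly negative). No residue arises, because $L(s,\mathrm{sym}^jf)$ is entire. On the new vertical line the integral is bounded by
\[
x^{\epsilon}\int_{1}^{T}t^{\frac{j+1}{2}-1+\epsilon}\,dt\ll x^{\epsilon}T^{\frac{j+1}{2}+\epsilon}.
\]
On the horizontal segments, the convexity bound gives
\[
\ll \max_{\epsilon\le\sigma\le b}x^{\sigma}T^{\frac{(j+1)(1-\sigma)}{2}-1+\epsilon}\ll \frac{x^{1+\epsilon}}{T}+x^{\epsilon}T^{\frac{j+1}{2}-1+\epsilon},
\]
which is dominated by the other terms.

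\textbf{Step 4 (optimisation).} Balance $x/T$ against $T^{(j+1)/2}$. This gives $T^{(j+3)/2}=x$, i.e.\ $T=x^{2/(j+3)}$, and the total error becomes $x^{1-\frac{2}{j+3}+\epsilon}$, which is the claimed bound.

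The main obstacle is justifying Step 1. It requires the analytic continuation and functional equation of $L(s,\mathrm{sym}^jf)$ for every $j\ge2$, which we cite from Newton--Thorne rather than prove. Once that input is granted, the remaining steps are routine. If we wanted to avoid the deep input, the fallback would be to argue only for small $j$ (via Gelbart--Jacquet and Kim--Shahidi), but for general $j$ we rely on the automorphy result.
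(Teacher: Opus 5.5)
Your argument is correct and reaches the paper's exponent, but it handles the left vertical line differently. The paper shifts only to $\Re(s)=\frac12+\epsilon$. It bounds that integral using the dyadic Lemma~\ref{lem:int}, Cauchy--Schwarz, and the mean-square estimate $\int_{T_1}^{2T_1}|L(\frac12+\epsilon+it,\mathrm{sym}^jf)|^2\,dt\ll T_1^{\frac{j+1}{2}+\epsilon}$ from \eqref{eq:genL2mom}, which gives $x^{\frac12+\epsilon}T^{\frac{j+1}{4}-\frac12+\epsilon}$. You shift to $\Re(s)=\epsilon$ and use only the pointwise convexity bound, which gives $x^{\epsilon}T^{\frac{j+1}{2}+\epsilon}$. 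Balancing each against $x^{1+\epsilon}/T$ gives $x^{1/2}=T^{(j+3)/4}$ in the paper's case and $x=T^{(j+3)/2}$ in yours. These are the same condition, so both choose $T=x^{2/(j+3)}$, and both vertical contributions equal $x^{1-\frac{2}{j+3}+\epsilon}$ there.

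Your route is more economical in three ways.
\begin{itemize}
\item It needs nothing beyond Phragm\'en--Lindel\"of, which the paper already uses for the horizontal segments; indeed \eqref{eq:genLbound} is stated down to $\sigma=-\epsilon$.
\item It makes explicit the automorphy input (entirety and functional equation, from Newton--Thorne) that the paper uses only tacitly, when it asserts that no pole is crossed and applies Lemma~\ref{lem:genLbound} to $L(s,\mathrm{sym}^jf)$.
\item It disposes of the range $|t|\le 1$ by continuity, whereas the paper's treatment of $I_1$ there is informal.
\end{itemize}

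What the paper's set-up on $\Re(s)=\frac12+\epsilon$ buys is reusability. In the moment theorems it combines H\"older's inequality with fourth-moment bounds for $\zeta$, $L(s,\chi_4)$ and $L(s,\mathrm{sym}^2f)$. There it genuinely beats what pointwise convexity on $\Re(s)=\epsilon$ would give: for Theorem~\ref{thm:main1(0)} it yields $x^{1-\frac{2}{(j+1)^2}}$, where your method would give only $x^{1-\frac{2}{(j+1)^2+2}}$.
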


\begin{theorem} \label{thm:main}
    Let $f \in H_k$ and $j \geq 2$ be a fixed integer. Then for any $\epsilon > 0$, we have $$\sum_{\substack{a_1^2+a_2^2 \leq x \\ (a_1,a_2) \in \mathbb{Z}^2}}\lambda_{\mathrm{sym}^jf}(a_{1}^{2}+a_2^2) = O \left( x^{1-\frac{1}{j+1}+\epsilon}\right).$$
\end{theorem}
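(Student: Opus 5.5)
The sum over lattice points equals $\sum_{n\le x} r_2(n)\lambda_{\mathrm{sym}^jf}(n)$, where $r_2(n)=4\sum_{d\mid n}\chi_4(d)$ and $\chi_4$ is the non-principal character modulo $4$; the term $n=0$ contributes $O(1)$ and is harmless. The plan is to express this as a Dirichlet series, factor it into known $L$-functions, and then apply Perron's formula with a contour shift.

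\textbf{Factorization.} For $\Re(s)>1$ put
\[
F(s)=\sum_{n\ge1}\frac{r_2(n)\lambda_{\mathrm{sym}^jf}(n)}{n^s}.
\]
Since $r_2(n)/4=(1*\chi_4)(n)$ is multiplicative, we compare Euler factors at each prime $p\nmid 2$. Write $\lambda(p)=\lambda_{\mathrm{sym}^jf}(p)$. Then
\[
\sum_{d\mid p}\chi_4(d)\lambda(p)=(1+\chi_4(p))\lambda(p),
\]
which matches the $p^{-s}$ coefficient of $L(s,\mathrm{sym}^jf)L(s,\mathrm{sym}^jf\otimes\chi_4)$. Hence
\[
F(s)=4L(s,\mathrm{sym}^jf)\,L(s,\mathrm{sym}^jf\otimes\chi_4)\,U(s),
\]
where $U(s)$ is an Euler product whose terms start at $p^{-2s}$ (with a modified factor at $p=2$). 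Using $\lambda_{\mathrm{sym}^jf}(n)\ll n^\epsilon$, $U(s)$ converges absolutely and is bounded in $\Re(s)\ge \tfrac12+\epsilon$. Both $L$-functions are entire of degree $j+1$: this uses the automorphy of $\mathrm{sym}^jf$ (Newton--Thorne) and of its twist by $\chi_4$. Because $j\ge 1$, there is no pole at $s=1$, so no main term appears.

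\textbf{Perron and contour shift.} Apply the truncated Perron formula with $b=1+\epsilon$ and height $T$:
\[
\sum_{n\le x}r_2(n)\lambda_{\mathrm{sym}^jf}(n)=\frac{1}{2\pi i}\int_{b-iT}^{b+iT}F(s)\frac{x^s}{s}\,ds+O\!\left(\frac{x^{1+\epsilon}}{T}\right).
\]
Move the line of integration to $\Re(s)=\tfrac12+\epsilon$. No residues are crossed.

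\textbf{Bounding the integrals.} This is the main obstacle, because the product of the two $L$-functions has degree $2(j+1)$. On the critical line we use the convexity bound $L(\tfrac12+it,\cdot)\ll |t|^{(j+1)/4+\epsilon}$ for each factor, so $|F(\tfrac12+it)|\ll |t|^{(j+1)/2+\epsilon}$. For the horizontal segments we interpolate via Phragmén--Lindelöf: for $\tfrac12\le\sigma\le 1+\epsilon$,
\[
F(\sigma+iT)\ll T^{(j+1)(1-\sigma)+\epsilon}.
\]
The horizontal contribution is then
\[
\ll \max_\sigma x^\sigma T^{(j+1)(1-\sigma)-1+\epsilon}\ll \frac{x^{1+\epsilon}}{T}+x^{1/2}T^{(j+1)/2-1+\epsilon}.
\]
The vertical line contributes $\ll x^{1/2+\epsilon}T^{(j+1)/2+\epsilon}$.

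Choose $T=x^{1/(j+1)}$. Then
\[
x^{1/2}T^{(j+1)/2}=x^{1/2}\cdot x^{1/2}=x,
\]
so the vertical-line bound is too large by a factor of $T$. Pointwise convexity therefore loses, and a mean-value estimate is needed instead.

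\textbf{Mean-value refinement.} Apply Cauchy--Schwarz together with the second-moment bound
\[
\int_{T}^{2T}\bigl|L(\tfrac12+it,\pi)\bigr|^2\,dt\ll T^{(j+1)/2+\epsilon}
\]
for each degree-$(j+1)$ factor. This follows from the approximate functional equation and the mean-value theorem for Dirichlet polynomials of length $T^{(j+1)/2}$, and it gives
\[
\int_{T}^{2T}\left|\frac{F(\tfrac12+it)}{\tfrac12+it}\right|dt\ll T^{(j+1)/2-1+\epsilon}.
\]
The error is then $x^{1/2}T^{(j-1)/2}+x/T$. Balancing the two terms gives $T=x^{1/(j+1)}$ and the total bound $x^{1-1/(j+1)+\epsilon}$, which is the claimed estimate.

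If the mean-value route proves delicate, the alternative is to shift only to $\Re(s)=\sigma_0<\tfrac12$ using the functional equation; the plan, however, is to use the mean-square estimate.
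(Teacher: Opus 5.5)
Your proposal is correct and follows the paper's proof essentially step for step. The shared steps are the factorisation $F(s)=L(s,\mathrm{sym}^jf)\,L(s,\mathrm{sym}^jf\otimes\chi_4)\times(\text{a series absolutely convergent in }\Re(s)>\tfrac12)$, Perron at $1+\epsilon$ shifted to $\Re(s)=\tfrac12+\epsilon$, the convexity bound $T^{(j+1)(1-\sigma)+\epsilon}$ on the horizontal segments, and a dyadic Cauchy--Schwarz with the degree-$(j+1)$ second-moment bound on the vertical line, balanced at $T=x^{1/(j+1)}$. The differences are cosmetic: your preliminary pointwise-convexity attempt, the explicit treatment of the $n=0$ term, and citing Newton--Thorne for entireness where the paper appeals to a general $L$-function lemma.
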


\begin{theorem} \label{thm:main(468)}
    Let $f \in H_k$, $j \geq 2$ be a fixed integer and $ m \in \{4,6,8\}$. Then for any $\epsilon > 0$, we have $$\sum_{\substack{a_{1}^{2}+\cdots+a_{m}^{2} \leq x \\ (a_{1},\ldots,a_{m})\in\mathbb{Z}^{m}}}\lambda_{\mathrm{sym}^jf}(a_{1}^{2}+\cdots+a_{m}^{2}) = O\left( x^{\frac{m}{2}-\frac{2}{j+3}+\epsilon} \right).$$
\end{theorem}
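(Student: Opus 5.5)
We write the left-hand side as $\sum_{n\le x}\lambda_{\mathrm{sym}^jf}(n)\,r_m(n)$, where $r_m(n)$ is the number of representations of $n$ as a sum of $m$ squares. We then use the classical divisor-sum formulas:
\[
r_4(n)=8\sum_{d\mid n,\,4\nmid d} d,\qquad r_6(n)=16\sum_{d\mid n}\chi_4(n/d)d^2-4\sum_{d\mid n}\chi_4(d)d^2,
\]
\[
r_8(n)=16\sum_{d\mid n}(-1)^{n+d}d^3 .
\]
In every case $r_m(n)$ is a finite linear combination of sums $\sum_{d\mid n}\psi_1(n/d)\psi_2(d)d^{m/2-1}$. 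Here $\psi_1,\psi_2$ range over characters modulo $4$, or over periodic weights modulo $2$ or $4$ accounting for the conditions $4\nmid d$ and $(-1)^{n+d}$. The plan is to reduce each piece to Theorem~\ref{thm:main(0)}, or to its twisted analogue, by writing $n=d\ell$ and summing over $d$ first.

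Take a typical term $T(x)=\sum_{d\ell\le x}\lambda_{\mathrm{sym}^jf}(d\ell)\psi_1(\ell)\psi_2(d)d^{m/2-1}$. Because $\lambda_{\mathrm{sym}^jf}$ is not completely multiplicative, we do not split $\lambda(d\ell)$ directly. Instead we work with Dirichlet series. The generating function of $\lambda_{\mathrm{sym}^jf}(n)r_m(n)$ is a finite combination of
\[
\sum_n\frac{\lambda_{\mathrm{sym}^jf}(n)\,(\psi_1*\psi_2\mathrm{id}^{m/2-1})(n)}{n^s}
=L(s,\mathrm{sym}^jf\otimes\psi_1)\,L(s-\tfrac m2+1,\mathrm{sym}^jf\otimes\psi_2)\,U(s).
\]
The local factors can be computed from the Satake parameters, since the convolution of $\lambda_{\mathrm{sym}^j}$ against a two-term divisor sum factors at each prime as a Rankin–Selberg type product. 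Here $U(s)$ is an Euler product that converges absolutely for $\Re s>\tfrac m2-\tfrac12$, and the nonmultiplicative periodic weights only alter finitely many Euler factors at $p=2$. The dominant factor is $L(s-\tfrac m2+1,\mathrm{sym}^jf\otimes\psi_2)$, a degree $j+1$ entire $L$-function. It is entire because $\mathrm{sym}^jf$ is cuspidal automorphic by Newton–Thorne, and twisting by a finite character preserves this. Consequently there is no pole at $s=m/2$, and hence no main term.

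The estimate then parallels the proof of Theorem~\ref{thm:main(0)}, shifted by $m/2-1$. Apply Perron's formula with truncation $T$, and move the line of integration from $\Re s=\tfrac m2+\epsilon$ to $\Re s=\tfrac m2-\tfrac12+\epsilon$. At the new line the factor $L(s,\mathrm{sym}^jf\otimes\psi_1)$ sits at $\Re s\ge 3/2$ and is bounded, as is $U(s)$. The remaining factor lies on its critical line. Using the convexity bound $L(\tfrac12+\epsilon+it,\mathrm{sym}^jf\otimes\psi_2)\ll(1+|t|)^{(j+1)/4+\epsilon}$ on the horizontal segments, and on the vertical line, Phragmén–Lindelöf gives the total
\[
\ll x^{m/2+\epsilon}T^{-1}+x^{m/2-1/2+\epsilon}T^{(j+1)/4}.
\]
(A mean-square bound on the critical line would do better on the vertical line, but convexity suffices for the claimed exponent.) Balancing the two terms with $T=x^{2/(j+3)}$ yields $O(x^{m/2-2/(j+3)+\epsilon})$.

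The main obstacle is the local computation establishing the factorisation with $U(s)$ absolutely convergent in $\Re s>\tfrac m2-\tfrac12$, and in particular the treatment of $p=2$ for the weights $4\nmid d$ and $(-1)^{n+d}$. The first thing to try is to verify at a generic prime that the $p^{-s}$ coefficient matches, namely $\lambda(p)(\psi_1(p)+\psi_2(p)p^{m/2-1})$. Then check that the discrepancy at $p^{-2s}$ is $O(p^{m-2})$, which is enough for the claimed convergence of $U(s)$. Finally, handle $p=2$ separately as a finite Euler factor that is holomorphic and bounded in the relevant region.
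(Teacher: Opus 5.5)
Your overall architecture matches the paper's (Lemmas \ref{lem:F2}--\ref{lem:F4}): you factor the generating function as $L(s,\mathrm{sym}^jf\otimes\psi_1)L(s-\frac m2+1,\mathrm{sym}^jf\otimes\psi_2)U(s)$, observe that there is no pole, and shift from $\Re s=\frac m2+\epsilon$ to $\Re s=\frac m2-\frac12+\epsilon$. The factorisation, the treatment of $p=2$ and the horizontal segments are fine. Your use of Jacobi's formula with the condition $4\nmid d$ is the correct one, and it only alters the Euler factor at $2$.

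The gap is the final step, and the claim that pointwise convexity on the vertical line suffices is false. Take your own bound $x^{m/2+\epsilon}T^{-1}+x^{m/2-1/2+\epsilon}T^{(j+1)/4}$ with $T=x^{2/(j+3)}$. The second term is then $x^{m/2-\frac12+\frac{j+1}{2(j+3)}}=x^{m/2-\frac{1}{j+3}}$, which exceeds the target $x^{m/2-\frac{2}{j+3}}$. The optimal choice for that bound is $T=x^{2/(j+5)}$, and it yields only $O(x^{m/2-2/(j+5)+\epsilon})$.

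What is missing is a saving of $T^{1/2}$ on the vertical line. The paper obtains it from a second-moment estimate rather than a pointwise one. For a degree $j+1$ $L$-function,
\[
\int_{T_1}^{2T_1}\bigl|L(\tfrac12+\epsilon+it,\mathrm{sym}^jf\otimes\psi_2)\bigr|^2\,dt\ll T_1^{(j+1)/2+\epsilon}.
\]
This is Lemma \ref{lem:genLbound}; it also follows from the approximate functional equation, of length about $t^{(j+1)/2}$, together with the mean value theorem for Dirichlet polynomials. Split $[1,T]$ dyadically (Lemma \ref{lem:int}) and apply Cauchy--Schwarz on each block:
\[
\int_1^T\bigl|L(\tfrac12+\epsilon+it,\mathrm{sym}^jf\otimes\psi_2)\bigr|\frac{dt}{t}\ll T^{\epsilon}\sup_{1\le T_1\le T}\frac{1}{T_1}\bigl(T_1^{(j+1)/2}\bigr)^{1/2}T_1^{1/2}\ll T^{\frac{j+1}{4}-\frac12+\epsilon}.
\]
The vertical contribution therefore becomes $x^{m/2-1/2+\epsilon}T^{(j+1)/4-1/2+\epsilon}$ instead of $x^{m/2-1/2+\epsilon}T^{(j+1)/4}$. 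Balancing this against $x^{m/2+\epsilon}/T$ gives $T^{(j+3)/4}=x^{1/2}$, which is exactly $T=x^{2/(j+3)}$ and the stated exponent. Convexity remains adequate on the horizontal segments, since their contribution $x^{m/2-1/2}T^{(j+1)/4-1}$ is smaller.
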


\begin{theorem} \label{thm:main1(0)}
        Let $f \in H_k$, $j \geq 2$ be a fixed integer. Then for any $\epsilon > 0$, we have 
         $$\sum_{n \leq x}\lambda_{\mathrm{sym}^jf}^2(n) = c_{0,j,f}x + O\left( x^{1-\frac{2}{(j+1)^2}+\epsilon} \right),$$ where $c_{0,j,f}$ is a constant that depends on $j$, $f$.
\end{theorem}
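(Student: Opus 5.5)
Our approach is the standard one: factor the Rankin–Selberg type Dirichlet series of $\lambda_{\mathrm{sym}^jf}^2(n)$ into known automorphic $L$-functions, then apply Perron's formula and move the contour.

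\textbf{Factorisation.} For $\Re(s)>1$ set
\[
F_j(s)=\sum_{n\ge1}\frac{\lambda_{\mathrm{sym}^jf}^2(n)}{n^s}.
\]
At a prime $p$, write $\alpha=\alpha_f(p)$, so that $\beta=\alpha^{-1}$ by \eqref{eq:deligne1}. The Satake parameters of $\mathrm{sym}^jf$ are $\alpha^{j},\alpha^{j-2},\dots,\alpha^{-j}$, and the Clebsch–Gordan decomposition gives
\[
\mathrm{sym}^j\otimes\mathrm{sym}^j=\bigoplus_{n=0}^{j}\mathrm{sym}^{2n}.
\]
Since $\lambda_{\mathrm{sym}^jf}(p)$ is real, $\lambda_{\mathrm{sym}^jf}(p)^2=\sum_{n=0}^{j}\lambda_{\mathrm{sym}^{2n}f}(p)$. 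Comparing Euler factors (using multiplicativity and $\lambda_{\mathrm{sym}^jf}(p^r)\ll_r 1$) gives
\[
F_j(s)=\zeta(s)\prod_{n=1}^{j}L(s,\mathrm{sym}^{2n}f)\,U_j(s),
\]
where $U_j(s)=\sum u_j(n)n^{-s}$ converges absolutely for $\Re(s)>1/2$. This holds because the Euler factors of $F_j$ and of $L_j(s):=\zeta(s)\prod_{n=1}^j L(s,\mathrm{sym}^{2n}f)$ agree in the $p^{-s}$ term, so their ratio is $1+O(p^{-2\sigma})$ at each prime.

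\textbf{Analytic input.} By Newton–Thorne, each $L(s,\mathrm{sym}^{2n}f)$ is the $L$-function of a cuspidal automorphic representation of $\mathrm{GL}_{2n+1}$. Hence it is entire, satisfies a functional equation of degree $2n+1$, and is nonvanishing on $\Re(s)=1$. So $L_j(s)$ is a product of standard $L$-functions of total degree
\[
d=1+\sum_{n=1}^{j}(2n+1)=(j+1)^2.
\]
The key estimate is the convexity/Phragmén–Lindelöf bound for $\epsilon\le\sigma\le1+\epsilon$:
\[
L_j(\sigma+it)\ll_\epsilon (1+|t|)^{\frac{d}{2}(1-\sigma)+\epsilon}.
\]

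\textbf{Perron and contour shift.} By truncated Perron with $b=1+\epsilon$ and the bound $\lambda_{\mathrm{sym}^jf}(n)\ll n^{\epsilon}$,
\[
\sum_{n\le x}\lambda_{\mathrm{sym}^jf}^2(n)=\frac{1}{2\pi i}\int_{b-iT}^{b+iT}F_j(s)\frac{x^s}{s}\,ds+O\Bigl(\frac{x^{1+\epsilon}}{T}\Bigr).
\]
We move the line of integration to $\Re(s)=\sigma_0=\tfrac12+\epsilon$, where $U_j$ is bounded. The only pole crossed is the simple pole of $\zeta$ at $s=1$, with residue
\[
c_{0,j,f}\,x,\qquad c_{0,j,f}=\prod_{n=1}^{j}L(1,\mathrm{sym}^{2n}f)\,U_j(1).
\]
The error terms are bounded as follows.
\begin{itemize}
\item The horizontal segments contribute $\ll \max_{\sigma_0\le\sigma\le b}x^{\sigma}T^{\frac d2(1-\sigma)-1+\epsilon}$. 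This is $\ll x^{1+\epsilon}T^{-1}+x^{1/2}T^{d/4-1+\epsilon}$.
\item The vertical line contributes $\ll x^{1/2+\epsilon}\int_1^T t^{d/4-1}\,dt\ll x^{1/2+\epsilon}T^{d/4}$.
\end{itemize}
Balancing $x/T=x^{1/2}T^{d/4}$ gives $T=x^{2/(d+4)}$. The resulting error is $x^{1-2/(d+4)+\epsilon}$, which is weaker than what is claimed.

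\textbf{Main obstacle.} To reach $x^{1-2/(j+1)^2+\epsilon}$ I would not stop at $\sigma_0=1/2$. Instead I would shift further left, to $\Re(s)=\epsilon$, where $U_j$ is not guaranteed to converge. To handle this, refine the factorisation so that $U_j$ converges absolutely in $\Re(s)>1/3$ or further. Do this by absorbing the $p^{-2s}$ terms into additional factors such as $L(2s,\cdot)^{\pm1}$, which are harmless near $\Re(s)=1/2$.

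Alternatively, and more simply, apply the classical Landau / Chandrasekharan–Narasimhan lemma for degree-$d$ $L$-functions. For coefficients $a(n)$ of a Dirichlet series with a functional equation of degree $d$, it gives
\[
\sum_{n\le x}a(n)=\text{residue}+O\bigl(x^{\frac{d-1}{d+1}+\epsilon}\bigr),
\]
and $\tfrac{d-1}{d+1}=1-\tfrac{2}{d+1}$. Combined with a convolution with the absolutely convergent $U_j$, this yields $1-\tfrac{2}{(j+1)^2+1}$. The exponent $2/(j+1)^2$ itself comes from the direct contour argument on $\Re(s)=\epsilon$: there the vertical integral is $\ll x^{\epsilon}T^{d/2}$, and balancing it against $x/T$ gives $x^{1-2/(d+2)}$.

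I expect the bookkeeping at this step, namely identifying which balancing produces exactly $1-2/(j+1)^2$ together with the needed region of absolute convergence for the correction factor, to be the main technical difficulty. The rest of the argument is routine.
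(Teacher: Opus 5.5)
\textbf{Gap: the proposal never reaches the stated exponent.} Your factorisation, the shift to $\Re(s)=\tfrac12+\epsilon$, the residue at $s=1$ and the horizontal-segment bound all agree with the paper. Beyond that, every route you describe gives something strictly weaker than $1-\frac{2}{(j+1)^2}$. With $d=(j+1)^2$:
\begin{itemize}
\item pointwise convexity on $\Re(s)=\tfrac12+\epsilon$ gives $1-\frac{2}{d+4}$;
\item the Chandrasekharan--Narasimhan lemma gives $1-\frac{2}{d+1}$;
\item your heuristic contour on $\Re(s)=\epsilon$ gives $1-\frac{2}{d+2}$.
\end{itemize}
The third route is also not available unconditionally. $U_j$ converges absolutely only for $\Re(s)>\tfrac12$. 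The factors $L(2s,\cdot)^{-1}$ you propose have poles at the zeros of $L(2s,\cdot)$ with $0<\Re(s)<\tfrac12$, which you cannot control. The remaining $p^{-3s},p^{-4s},\dots$ terms would stop you at $\Re(s)=\tfrac13,\tfrac14,\dots$ in any case. So the ``main obstacle'' is left open, and what you have proves a weaker theorem.

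\textbf{Missing idea: mean values instead of pointwise bounds.} Stay on $\Re(s)=\tfrac12+\epsilon$, as the paper does. After the dyadic reduction (Lemma~\ref{lem:int}),
\[
\int_1^T |L_j(\tfrac12+\epsilon+it)|\,\frac{dt}{t}\ll \log T\,\sup_{T_1\le T}\frac{1}{T_1}\int_{T_1}^{2T_1}|L_j(\tfrac12+\epsilon+it)|\,dt .
\]
Apply H\"older with exponents $(4,4,2)$ to the three factors $\zeta(s)$, $L(s,\mathrm{sym}^2f)$ and $\prod_{n=2}^{j}L(s,\mathrm{sym}^{2n}f)$. For these use, respectively:
\begin{itemize}
\item the fourth moment $\int_{T_1}^{2T_1}|\zeta|^4\ll T_1^{1+\epsilon}$;
\item Perelli's fourth moment $\int_{T_1}^{2T_1}|L(\cdot,\mathrm{sym}^2f)|^4\ll T_1^{3+\epsilon}$;
\item the Jiang--L\"u mean square $\ll T_1^{((j+1)^2-4)/2+\epsilon}$ for the remaining product, which has degree $(j+1)^2-4$.
\end{itemize}
Since $\tfrac14+\tfrac14+\tfrac12=1$, no factor $(\int_{T_1}^{2T_1}dt)^{1/2}$ survives. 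You get $\int_{T_1}^{2T_1}|L_j|\,dt\ll T_1^{(j+1)^2/4+\epsilon}$, so the vertical line contributes $x^{1/2+\epsilon}T^{(j+1)^2/4-1+\epsilon}$.

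This is a full power of $T$ better than your $x^{1/2+\epsilon}T^{d/4}$. It is also $T^{1/2}$ better than Cauchy--Schwarz against a single mean square of $L_j$, which would only give $1-\frac{2}{d+2}$, the same as your $\Re(s)=\epsilon$ heuristic. Balancing against $x^{1+\epsilon}/T$ with $T=x^{2/(j+1)^2}$ gives exactly $x^{1-\frac{2}{(j+1)^2}+\epsilon}$.
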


\begin{theorem} \label{thm:main1}
    Let $f \in H_k$ and $j \geq 2$ be a fixed integer. Then for any $\epsilon > 0$, we have $$\sum_{\substack{a_1^2+a_2^2 \leq x \\ (a_1,a_2) \in \mathbb{Z}^2}}\lambda_{\mathrm{sym}^jf}^2(a_{1}^{2}+a_2^2) =c_{1,j,f}x+ O \left(x^{1-\frac{1}{(j+1)^2}+\epsilon}\right),$$  where $c_{1,j,f}$ is a constant that depends on $j$ and $f$.  
\end{theorem}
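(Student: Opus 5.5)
We rewrite the sum as $\sum_{n\le x} r_2(n)\lambda^2_{\mathrm{sym}^jf}(n)$ and use $r_2(n)=4\sum_{d\mid n}\chi_4(d)$, i.e. $r_2(n)/4=(1*\chi_4)(n)$. The relevant generating function is
\[
F(s)=\sum_{n\ge1}\frac{\lambda^2_{\mathrm{sym}^jf}(n)\,(1*\chi_4)(n)}{n^s}.
\]
We factor $F$ into automorphic $L$-functions times an absolutely convergent Euler product. At a prime $p$, the local coefficient is $\lambda^2_{\mathrm{sym}^jf}(p)(1+\chi_4(p))$. By Clebsch--Gordan,
\[
\lambda^2_{\mathrm{sym}^jf}(p)=\sum_{n=0}^{j}\lambda_{\mathrm{sym}^{2n}f}(p),
\]
so comparing Euler factors at primes gives
\[
F(s)=\zeta(s)L(s,\chi_4)\prod_{n=1}^{j}L(s,\mathrm{sym}^{2n}f)\,L(s,\mathrm{sym}^{2n}f\otimes\chi_4)\;G_j(s),
\]
where $G_j(s)$ converges absolutely in $\Re s>1/2+\epsilon$. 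Indeed, the coefficients agree at primes, and the prime-power terms are controlled by the bound $\lambda_{\mathrm{sym}^jf}(n)\ll n^\epsilon$ together with $(1*\chi_4)(n)\le d(n)$. The $L$-functions $L(s,\mathrm{sym}^{2n}f)$ are entire (Newton--Thorne automorphy, or Gelbart--Jacquet and Kim--Shahidi for small $n$), and so are their twists by $\chi_4$. The only pole in $\Re s>1/2$ is therefore the simple pole of $\zeta(s)$ at $s=1$, with residue $L(1,\chi_4)\prod_n L(1,\mathrm{sym}^{2n}f)L(1,\mathrm{sym}^{2n}f\otimes\chi_4)G_j(1)$. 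Multiplying this residue by $4$ gives $c_{1,j,f}$.

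Next we apply truncated Perron's formula with $b=1+\epsilon$ and parameter $T$:
\[
\sum_{n\le x}=\frac{1}{2\pi i}\int_{b-iT}^{b+iT}F(s)\frac{x^s}{s}\,ds+O\!\left(\frac{x^{1+\epsilon}}{T}\right).
\]
We shift the contour to $\Re s=\sigma_0=1/2+\epsilon$ and pick up the residue $c_{1,j,f}x$. The product of $L$-functions has total degree
\[
2\Bigl(1+\sum_{n=1}^{j}(2n+1)\Bigr)=2(j+1)^2.
\]
The main obstacle is bounding this product on the shifted contour, and I would try the crude route first. Phragmén--Lindelöf convexity gives, for each $L$-function of degree $d$, the bound $\ll(|t|+1)^{d(1-\sigma)/2+\epsilon}$ in $1/2\le\sigma\le1$. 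The horizontal segments then contribute
\[
\ll\max_{\sigma_0\le\sigma\le b}x^{\sigma}T^{(j+1)^2(1-\sigma)-1+\epsilon}\ll\frac{x^{1+\epsilon}}{T}+x^{1/2}T^{(j+1)^2/2-1+\epsilon}.
\]
The vertical line contributes
\[
\ll x^{1/2+\epsilon}\int_1^T t^{(j+1)^2/2-1+\epsilon}\,dt\ll x^{1/2}T^{(j+1)^2/2+\epsilon}.
\]
The zeta and $L(s,\chi_4)$ factors could use subconvexity, but degree counting alone suffices here.

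Balancing $x/T$ against $x^{1/2}T^{(j+1)^2/2}$ gives $T^{(j+1)^2/2+1}=x^{1/2}$, that is, $T=x^{1/((j+1)^2+2)}$. The resulting error is $x^{1-1/((j+1)^2+2)+\epsilon}$, which is slightly weaker than the claimed $x^{1-1/(j+1)^2+\epsilon}$. To reach the stated exponent, I would instead shift to a line $\sigma_0$ close to $1$, using the convexity exponent $(j+1)^2(1-\sigma)$ together with mean-value estimates. Alternatively, I would avoid the $1/|s|$ loss by a smoothed Perron formula, or use the standard bound
\[
\int_T^{2T}|F(\sigma_0+it)|\,dt\ll T^{(j+1)^2(1-\sigma_0)+\epsilon}.
\]
Balancing the term $x^{\sigma_0}T^{(j+1)^2(1-\sigma_0)}$ against $x/T$ in the limit $\sigma_0\to1$ gives exactly the exponent $1-1/(j+1)^2$. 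This optimisation of the degree-$2(j+1)^2$ convexity bound is the delicate step.
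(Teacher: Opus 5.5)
Your set-up agrees with the paper's: the factorisation through Lemma~\ref{lambdasquare} (this is Lemma~\ref{lem:F1*}), Perron at $1+\epsilon$, the shift to $\Re s=\frac12+\epsilon$ picking up the residue at $s=1$, and the horizontal-segment bound $x^{1+\epsilon}/T+x^{1/2}T^{(j+1)^2/2-1+\epsilon}$. That last bound already suffices; the paper's subconvexity input there is not needed.

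\textbf{The gap.} The gap is the left vertical line. Pointwise convexity there gives $x^{1/2}T^{(j+1)^2/2}$, hence only $x^{1-1/((j+1)^2+2)+\epsilon}$, as you say. None of your repairs is actually carried out, and two of them do not work.
\begin{itemize}
\item Moving the line towards $\Re s=1$ with pointwise convexity gives the saving $\frac{1-\sigma_0}{(j+1)^2(1-\sigma_0)+1}$. This \emph{decreases} as $\sigma_0\to1$.
\item With pointwise bounds, a smoothed Perron formula merely trades $T$ for $x/H$ and gives the same balance.
\item Your averaged bound $\int_T^{2T}|F(\sigma_0+it)|\,dt\ll T^{(j+1)^2(1-\sigma_0)+\epsilon}$ is false once $(j+1)^2(1-\sigma_0)<1$. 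Shifting $\int_{\sigma_0+iT}^{\sigma_0+2iT}F(s)\,ds$ to $\Re s=2$, where $F$ has leading coefficient $1$, shows this integral is $iT+o(T)$.
\end{itemize}
So the limit $\sigma_0\to1$ is meaningless. Your balancing is also off. The vertical contribution is $x^{\sigma_0}T^{(j+1)^2(1-\sigma_0)-1}$, since the weight $1/|s|$ costs $T^{-1}$. Balancing this against $x^{1+\epsilon}/T$ gives $T=x^{1/(j+1)^2}$ for every admissible $\sigma_0$. You should simply stay at $\sigma_0=\frac12+\epsilon$.

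\textbf{The missing ingredient.} On $\Re s=\frac12+\epsilon$ you need moment estimates instead of pointwise bounds; this saves exactly the factor $T$ you are short of. The paper does this as follows.
\begin{itemize}
\item It splits $\int_1^T|F(\frac12+\epsilon+it)|\,dt/t$ dyadically (Lemma~\ref{lem:int}).
\item On each $[T_1,2T_1]$ it applies H\"older with exponents $(4,4,2)$.
\item The fourth moments of $\zeta(\frac12+\epsilon+it)$ and $L(\frac12+\epsilon+it,\chi_4)$ are $\ll T_1^{1+\epsilon}$ (Lemmas~\ref{lem:zetabound} and \ref{lem:LSigmaChi}).
\item The product $\prod_{n=1}^{j}L(s,\mathrm{sym}^{2n}f)L(s,\mathrm{sym}^{2n}f\otimes\chi_4)$ has degree $2((j+1)^2-1)$. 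Its mean square on $[T_1,2T_1]$ is $\ll T_1^{(j+1)^2-1+\epsilon}$ (Lemma~\ref{lem:genLbound}).
\end{itemize}
Hence $T_1^{-1}\int_{T_1}^{2T_1}|F|\,dt\ll T_1^{(j+1)^2/2-1+\epsilon}$, and the vertical line contributes $x^{1/2+\epsilon}T^{(j+1)^2/2-1+\epsilon}$. Choosing $T=x^{1/(j+1)^2}$ then gives the stated $x^{1-1/(j+1)^2+\epsilon}$.

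Equivalently, your averaged bound does hold at $\sigma_0=\frac12+\epsilon$. Write $F$ as a product of two factors of degree $(j+1)^2$ each, e.g.\ $\zeta(s)\prod_n L(s,\mathrm{sym}^{2n}f)$ and $L(s,\chi_4)\prod_n L(s,\mathrm{sym}^{2n}f\otimes\chi_4)$. Then apply Cauchy--Schwarz with the mean-square bound $T^{(j+1)^2/2+\epsilon}$ for each factor. This step is the entire content of the improvement, so it has to be stated and justified at $\sigma_0=\frac12+\epsilon$, not invoked through a limit $\sigma_0\to1$.
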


\begin{theorem} \label{thm:main1(468)}
        Let $f \in H_k$, $j \geq 2$ be a fixed integer and $ m \in \{4,6,8\}$. Then for any $\epsilon > 0$, we have 
         $$\sum_{\substack{a_{1}^{2}+\cdots+a_{m}^{2} \leq x \\ (a_{1},\ldots,a_{m})\in\mathbb{Z}^{m}}}\lambda_{\mathrm{sym}^jf}^2(a_{1}^{2}+\cdots+a_{m}^{2}) = c_{m,j,f}x^\frac{m}{2} + O\left( x^{\frac{m}{2}-\frac{2}{(j+1)^2}+\epsilon} \right),$$ where $c_{m,j,f}$ is a constant that depends on $j$, $f$ and $m$.
\end{theorem}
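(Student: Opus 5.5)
Since the last statement is Theorem~\ref{thm:main1(468)}, the plan is to write the sum as $\sum_{n\le x} r_m(n)\lambda_{\mathrm{sym}^jf}^2(n)$, where $r_m(n)$ counts representations of $n$ as a sum of $m$ squares. We then use the classical formulas of Jacobi type for $m\in\{4,6,8\}$:
\begin{align}
r_4(n)&=8\sum_{d\mid n,\,4\nmid d} d,\\
r_6(n)&=16\sum_{d\mid n}\chi_4(n/d)d^2-4\sum_{d\mid n}\chi_4(d)d^2,\\
r_8(n)&=16(-1)^n\sum_{d\mid n}(-1)^d d^3.
\end{align}
Each is a finite linear combination of convolutions $\sum_{d\mid n}\psi_1(d)\psi_2(n/d)d^{m/2-1}$, where $\psi_1,\psi_2$ are $\chi_4$, the principal character mod $4$, or simple $2$-adic modifications. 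For each such piece we study the Dirichlet series
\[
D(s)=\sum_{n\ge1}\frac{\lambda_{\mathrm{sym}^jf}^2(n)\sum_{d\mid n}\psi_1(d)\psi_2(n/d)d^{m/2-1}}{n^s}.
\]
Using the Clebsch--Gordan decomposition $\lambda_{\mathrm{sym}^jf}(p)^2=\sum_{i=0}^{j}\lambda_{\mathrm{sym}^{2i}f}(p)$ at primes, we compare Euler factors and obtain a factorisation
\[
D(s)=L(s-\tfrac m2+1,\psi_1)L(s,\psi_2)\prod_{i=1}^{j}L(s-\tfrac m2+1,\mathrm{sym}^{2i}f\otimes\psi_1)\,L(s,\mathrm{sym}^{2i}f\otimes\psi_2)\,U(s).
\]
Here $U(s)$ converges absolutely for $\Re s>\frac m2-\frac12+\epsilon$. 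The bookkeeping at $p=2$, and the $(-1)^n$ in $r_8$, are absorbed into $U$ or into a finite Euler factor.

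The main term comes from the pieces with $\psi_1$ principal. There $L(s-\frac m2+1,\psi_1)$ has a simple pole at $s=m/2$, while all the other factors are entire: the $\mathrm{sym}^{2i}f$ twists are automorphic and entire for all $i$ by Newton--Thorne. The pieces with $\psi_1=\chi_4$ have no pole. The residue gives $c_{m,j,f}x^{m/2}$, where $c_{m,j,f}$ is built from $L(1,\mathrm{sym}^{2i}f)$ and the values at $m/2$ of $L(s,\mathrm{sym}^{2i}f\otimes\psi_2)$; these are nonvanishing, so the constant is nonzero.

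For the error term we use Perron's formula with truncation $T$, valid since coefficients are $\ll n^{m/2-1+\epsilon}$. We then shift the contour to $\Re s=\frac m2-1+\sigma_0$ with $\sigma_0$ slightly above $1/2$, or to $\Re s = \frac m2 - 1 + \epsilon$ as in the $m=0$ case (Theorem~\ref{thm:main1(0)}), and mirror that proof. The factor $L(s,\mathrm{sym}^{2i}f\otimes\psi_2)$ is evaluated at $\Re s\ge m/2-1\ge1$. It is therefore bounded, except when $m=4$, where the line $\Re s=1$ sits at the edge, but there it is still $\ll T^\epsilon$. So the only growing factor is the shifted product
\[
\zeta(w)\prod_{i=1}^{j}L(w,\mathrm{sym}^{2i}f)\quad\text{with } w=s-\tfrac m2+1,
\]
which is essentially $L(w,\mathrm{sym}^jf\times\mathrm{sym}^jf)$, of degree $(j+1)^2$. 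This is exactly the structure handled in Theorem~\ref{thm:main1(0)}, which we take as given. Equivalently, we can write the main piece as a convolution and apply Theorem~\ref{thm:main1(0)} directly: the sum is $\sum_{d e\le x}\psi_2(e)d^{m/2-1}a(d)b(e)$, and we use partial summation in $d$ with the bound $x^{1-2/(j+1)^2+\epsilon}$. This yields $x^{m/2-1}\cdot x^{1-2/(j+1)^2+\epsilon}=x^{m/2-2/(j+1)^2+\epsilon}$.

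Concretely, we apply the convexity bound $\ll |t|^{(j+1)^2(1-\sigma)/2+\epsilon}$ on the line $\sigma=\epsilon$ in the $w$-variable. We bound the horizontal segments in the same way and choose $T=x^{2/(j+1)^2}$. The horizontal and vertical contributions then balance at $x^{m/2-2/(j+1)^2+\epsilon}$.

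The main obstacle is the factorisation step. Specifically, we must verify that $U(s)$ converges absolutely in a half-plane to the left of $\Re s = m/2 - 1$, by comparing $p^{-2s}$ terms with coefficients $\ll p^{m-2+\epsilon}$. That half-plane is $\Re s>\frac m2-\frac12$, which is fine for a shift to $\Re w=\epsilon$ only if we instead move only to $\Re w=\frac12+\epsilon$. If that is forced, the convexity exponent at $\sigma=1/2$ gives $T^{(j+1)^2/4}$ and a weaker result. So the first thing we will try is to remove the non-absolutely convergent part of $U$ by extracting further $L$-factors at $2w$, that is, factors of the form $L(2w,\cdot)^{\pm1}$, as is standard. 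This makes $U$ absolutely convergent down to $\Re w>\epsilon$ and permits the shift that yields the stated exponent.
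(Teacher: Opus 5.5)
Your setup (the divisor-sum formulas for $r_m$, the factorisation with $U$ absolutely convergent for $\Re s>\frac{m}{2}-\frac12$, and the main term from the pole at $s=\frac{m}{2}$) agrees with the paper. The gap is in the error term: the analysis you actually carry out does not give the exponent $\frac{2}{(j+1)^2}$. On $\Re w=\epsilon$, the pointwise convexity bound $|\zeta(w)\prod_{i=1}^{j}L(w,\mathrm{sym}^{2i}f)|\ll|t|^{(j+1)^2/2+\epsilon}$ only bounds the vertical integral by
\[
x^{\frac{m}{2}-1+\epsilon}\int_{1}^{T}t^{\frac{(j+1)^2}{2}}\,\frac{dt}{t}\asymp x^{\frac{m}{2}-1+\epsilon}\,T^{\frac{(j+1)^2}{2}}.
\]
The factor $1/|s|$ is $\asymp 1/T$ only on the horizontal segments, not on the vertical line. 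For $T=x^{2/(j+1)^2}$ this is $x^{m/2+\epsilon}$, so there is no saving at all, and balancing against $x^{m/2}/T$ gives only $O(x^{\frac{m}{2}-\frac{2}{(j+1)^2+2}+\epsilon})$.

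Your repair of $U$ also fails. Since $\mathrm{Sym}^2(V\otimes V)=\mathrm{Sym}^2V\otimes\mathrm{Sym}^2V\oplus\wedge^2V\otimes\wedge^2V$, with $V$ the representation attached to $\mathrm{sym}^jf$, the $p^{-2w}$-coefficient of $U$ is, to leading order, minus the square of the trace of $\wedge^2$ of the Satake matrix. So the factor you would extract at $2w$ is a reciprocal $L$-function, which has poles at its zeros once $\Re w<\frac12$. Moreover, even after that extraction, the $p^{-3w}$-terms still converge only for $\Re w>\frac13$.

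The missing idea is a mean-value estimate on $\Re w=\frac12+\epsilon$, where $U$ already converges, so no continuation of $U$ is needed. Your diagnosis that this line forces a weaker result holds only for pointwise bounds. This is the paper's proof for $F_j^{*(2)}$, $F_{j_1}^{*(3)}$, $F_{j_2}^{*(3)}$ and $F_j^{*(4)}$:
\begin{enumerate}
\item Apply Perron's formula and shift to $\Re s=\frac{m}{2}-\frac12+\epsilon$.
\item Split $\int_1^T|\cdot|\,dt/t$ dyadically (Lemma~\ref{lem:int}).
\item Apply H\"older with exponents $(4,4,2)$ to $\zeta$, $L(\cdot,\mathrm{sym}^2f)$ and $\prod_{i=2}^{j}L(\cdot,\mathrm{sym}^{2i}f)$, using $\int_{T_1}^{2T_1}|\zeta|^4\ll T_1^{1+\epsilon}$, $\int_{T_1}^{2T_1}|L(\cdot,\mathrm{sym}^2f)|^4\ll T_1^{3+\epsilon}$, and the mean square $T_1^{((j+1)^2-4)/2+\epsilon}$ of the remaining product.
\end{enumerate}
This gives $T_1^{-1}\int_{T_1}^{2T_1}|\cdots|\,dt\ll T_1^{(j+1)^2/4-1+\epsilon}$, a full factor $T$ better than pointwise convexity. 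Then $x^{m/2}/T$ and $x^{m/2-1/2}T^{(j+1)^2/4-1}$ balance exactly at $T=x^{2/(j+1)^2}$.

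Your convolution remark can be turned into a valid alternative. The complementary series converges absolutely at $\Re s=\frac{m}{2}-\frac{2}{(j+1)^2}$, since this exceeds $\frac{m}{2}-\frac12$. However, it is not equivalent to your contour computation, and its input is the same mean-value argument. First, you need the $m=0$ asymptotic for the coefficients of $\zeta(w)\prod_{i}L(w,\mathrm{sym}^{2i}f)$ rather than for $\lambda^2_{\mathrm{sym}^jf}$. Dividing by the Euler factors of $F_j^{*(0)}$ is unsafe, since they need not be zero-free at small primes. Second, for the $\sum_{d\mid n}\chi_4(d)d^2$ part of $r_6$ you need a bound $\ll y^{1-2/(j+1)^2+\epsilon}$ for the $\chi_4$-twisted series, which Theorem~\ref{thm:main1(0)} does not supply.
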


In the sum of $10$ and $12$ squares, there exists another term besides the terms involving divisor sums and Dirichlet character, as we can see in \eqref{eq:r_10&l_5&v_5} and \eqref{eq:r_12&l_6}. But we will have the similar formula for the partial sum of $\lambda_{\mathrm{sym}^jf}(n)$ and $\lambda_{\mathrm{sym}^jf}^2(n)$ over the sum of $10$ and $12$ squares. More precisely, 

\begin{theorem} \label{thm:main(10,12)}
Let $j\geq 2$ be a fixed integer. For any fixed $\epsilon>0$ and all sufficiently large $x$, we have
\begin{align}
\sum_{\substack{a_{1}^{2}+\cdots+a_{m}^{2}\leq x \\ (a_{1},\ldots,a_{m})\in\mathbb{Z}^{m}}}\lambda_{\mathrm{sym}^{j}f}(a_{1}^{2}+\cdots+a_{m}^{2})=O\left(x^{\frac{m}{2}-\frac{2}{j+3}+\epsilon}\right),
\end{align}
for $m = 10,12$.
\end{theorem}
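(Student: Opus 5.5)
Write the left-hand side as $\sum_{n\le x} r_m(n)\lambda_{\mathrm{sym}^jf}(n)$. For $m=10,12$ we decompose $r_m(n)$, as in \eqref{eq:r_10&l_5&v_5} and \eqref{eq:r_12&l_6}, into an Eisenstein part and a cusp part. The Eisenstein part is a linear combination of twisted divisor sums $\sum_{d\mid n}\chi(d)\chi'(n/d)\,d^{m/2-1}$, possibly with $2$-adic restrictions, where $\chi,\chi'\in\{\chi_0,\chi_4\}$. The cusp part is $c\,n^{(m/2-1)/2}a(n)$, with $a(n)$ the normalised coefficients of a cusp form of weight $m/2$ on $\Gamma_0(4)$. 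For $m=10$ this is $\eta^{4}(z)\eta^4(2z)\dots$-type weight $5$; for $m=12$ it is the weight $6$ form $\eta^{12}(2z)$.

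The Eisenstein part is handled exactly as in the proof of Theorem~\ref{thm:main(468)}. Its generating Dirichlet series factors as $L(s-m/2+1,\chi)L(s,\chi')$ up to finite Euler factors at $2$. Twisting by $\lambda_{\mathrm{sym}^jf}$, we write
\[
\sum_n \frac{\lambda_{\mathrm{sym}^jf}(n)\sum_{d\mid n}\chi(d)\chi'(n/d)d^{m/2-1}}{n^s}=L(s-\tfrac m2+1,\mathrm{sym}^jf\otimes\chi)\,L(s,\mathrm{sym}^jf\otimes\chi')\,U(s),
\]
with $U$ absolutely convergent for $\Re s>m/2-1/2$. Perron's formula then gives the bound. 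Shift the line to $\Re s=m/2-1/2+\epsilon$ and use the convexity bound for the degree-$(j+1)$ $L$-function $L(s-m/2+1,\mathrm{sym}^jf\otimes\chi)$, together with absolute convergence of the other factor. Since there is no pole, this gives $x^{m/2-2/(j+3)+\epsilon}$, the same exponent as in Theorem~\ref{thm:main(0)}. Equivalently, summing by parts over $d$ reduces this part to partial sums $\sum_{n\le y}\lambda_{\mathrm{sym}^jf}(n)\chi(n)\ll y^{1-2/(j+3)+\epsilon}$, which is Theorem~\ref{thm:main(0)} with a character twist.

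For the cusp part we must bound $\sum_{n\le x} n^{(m-2)/4}a(n)\lambda_{\mathrm{sym}^jf}(n)$. By partial summation it suffices to show $\sum_{n\le y}a(n)\lambda_{\mathrm{sym}^jf}(n)\ll y^{1-\delta}$ for some $\delta$. Then the contribution is $x^{(m-2)/4+1-\delta}=x^{m/4+1/2-\delta}$. For $m\ge10$ we have $m/4+1/2\le m/2-2$, while $m/2-2/(j+3)>m/2-1$, so the trivial bound suffices. Indeed, by Deligne, $|a(n)|\le d(n)$ and $\lambda_{\mathrm{sym}^jf}(n)\ll n^\epsilon$. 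Hence the cusp contribution is $\ll x^{(m-2)/4+1+\epsilon}=x^{m/4+1/2+\epsilon}$. This is at most $x^{m/2-2/(j+3)+\epsilon}$ because $m/4+1/2\le m/2-2/(j+3)$ whenever $m\ge 4$.

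The main point is therefore not an obstacle. One only has to check the explicit formula for $r_{10}$ and $r_{12}$ carefully: the $2$-power conditions and the characters appearing must produce $L$-functions of $\mathrm{sym}^jf$ twisted by characters modulo $4$, which remain entire. The required convexity bounds for these twists also hold by the known automorphy and analytic properties used in Theorem~\ref{thm:main(468)}.
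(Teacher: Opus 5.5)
\textbf{There is one genuine gap, in the Eisenstein (divisor-sum) part.} Your decomposition of $r_m$ is the paper's, but the step you describe there does not give the exponent $\frac{2}{j+3}$. On $\Re s=\frac m2-\frac12+\epsilon$ the pointwise convexity bound only gives $|L(\frac12+\epsilon+it,\mathrm{sym}^jf\otimes\chi)|\ll(1+|t|)^{\frac{j+1}{4}+\epsilon}$. The vertical integral is then
\[
\ll x^{\frac m2-\frac12+\epsilon}\int_1^T t^{\frac{j+1}{4}-1+\epsilon}\,dt\ll x^{\frac m2-\frac12+\epsilon}T^{\frac{j+1}{4}+\epsilon}.
\]
Balancing this against the Perron error $x^{\frac m2+\epsilon}/T$ forces $T=x^{2/(j+5)}$ and yields only $O(x^{\frac m2-\frac{2}{j+5}+\epsilon})$.

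The missing ingredient is the mean-square estimate \eqref{eq:genL2mom}. The paper splits the vertical integral dyadically (Lemma \ref{lem:int}) and applies Cauchy--Schwarz, which gives
\[
\frac{1}{T_1}\int_{T_1}^{2T_1}\Bigl|L\bigl(\tfrac12+\epsilon+it,\mathrm{sym}^jf\otimes\chi\bigr)\Bigr|\,dt\ll \frac{1}{T_1}\bigl(T_1^{\frac{j+1}{2}+\epsilon}\bigr)^{1/2}T_1^{1/2}=T_1^{\frac{j+1}{4}-\frac12+\epsilon}.
\]
This saves $T^{1/2}$ over the pointwise bound. Now $T=x^{2/(j+3)}$ balances $x^{\frac m2}/T$ against $x^{\frac m2-\frac12}T^{\frac{j+1}{4}-\frac12}$. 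This is exactly how the paper gets the exponent in Theorems \ref{thm:main(0)} and \ref{thm:main(468)}, which Theorem \ref{thm:main(10,12)} reuses.

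Your fallback, reducing to partial sums $\sum_{n\le y}\lambda_{\mathrm{sym}^jf}(n)\chi(n)$, does reach the right exponent. It is not equivalent to a convexity-only argument, though. It rests on the $\chi_4$-twisted version of Theorem \ref{thm:main(0)}, whose proof again needs \eqref{eq:genL2mom} for $\mathrm{sym}^jf\otimes\chi_4$.

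The ``summing by parts over $d$'' step also needs care, because $\lambda_{\mathrm{sym}^jf}$ is not completely multiplicative. It has to go through your factorisation:
\begin{itemize}
\item the coefficients are the convolution of $n\mapsto\lambda_{\mathrm{sym}^jf}(n)\chi(n)n^{\frac m2-1}$, $n\mapsto\lambda_{\mathrm{sym}^jf}(n)\chi'(n)$, and the coefficients $u$ of $U$;
\item one needs $\sum_k|u(k)|k^{-\sigma}<\infty$ at $\sigma=\frac m2-\frac{2}{j+3}$, which holds since $\frac{2}{j+3}<\frac12$ for $j\ge2$.
\end{itemize}

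Your treatment of the cusp part is correct and cleaner than the paper's. The paper replaces $a_n$ by $n^3$ inside a signed sum, which is not valid as written, and then runs Perron on $L(s-3,\mathrm{sym}^jf)$. The trivial bound you use (even $\sum_{n\le x}|\lambda_{\mathrm{sym}^jf}(n)a_n|\ll x^{4+\epsilon}$, and likewise for $b_n$) already lies below the target $x^{\frac m2-\frac{2}{j+3}}$.
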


\begin{theorem} \label{thm:main1(10,12)}
Let $j\geq 2$ be a fixed integer. For any fixed $\epsilon>0$ and all sufficiently large $x$, we have
\[
\sum_{\substack{a_{1}^{2}+\cdots+a_{m}^{2}\leq x \\ (a_{1},\ldots,a_{m})\in\mathbb{Z}^{m}}}\lambda_{\mathrm{sym}^{j}f}^2(a_{1}^{2}+\cdots+a_{m}^{2})= D_{j,f,m}x^\frac{m}{2}+O\left(x^{\frac{m}{2}-\frac{2}{(j+1)^2}+\epsilon}\right),
\]
where $m=10,12$ and $D_{j,f,m}$ is an effective constant depending on $j,f$ and $m$. 
\end{theorem}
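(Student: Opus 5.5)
We prove Theorem \ref{thm:main1(10,12)} by writing the sum as $\sum_{n\le x}\lambda_{\mathrm{sym}^jf}^2(n)\,r_m(n)$, where $r_m(n)$ counts representations of $n$ as a sum of $m$ squares. Then we split $r_m(n)$, via the classical formulas \eqref{eq:r_10&l_5&v_5} and \eqref{eq:r_12&l_6}, into an \emph{Eisenstein part} and a \emph{cuspidal part}.

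\textbf{Eisenstein part.} This is a linear combination of twisted divisor sums $\sum_{d\mid n}\chi(d)\,d^{\,m/2-1}$ and $\sum_{d\mid n}\chi(n/d)\,d^{\,m/2-1}$, with $\chi\in\{\chi_0,\chi_4\}$; for $m=12$ there are additional $2$-adic modifications, e.g.\ $(-1)^{n}$ or parity conditions on $d$. The model case is treated as in Theorem \ref{thm:main1(468)}, which we may assume.

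\begin{enumerate}
\item Write each such term as $\sum_{d\ell=n} d^{\,m/2-1}\psi_1(d)\psi_2(\ell)$ and form the Dirichlet series
\[
\sum_n \frac{\lambda_{\mathrm{sym}^jf}^2(n)\,\sigma_{\psi_1,\psi_2}(n)}{n^s}.
\]
\item Factor it, via the Euler product and the Hecke/Clebsch--Gordan relation $\lambda_{\mathrm{sym}^jf}^2(p)=\sum_{i=0}^{j}\lambda_{\mathrm{sym}^{2i}f}(p)$, as
\[
\prod_{i=0}^{j} L\bigl(s-\tfrac m2+1,\mathrm{sym}^{2i}f\otimes\psi_1\bigr)\,L\bigl(s,\mathrm{sym}^{2i}f\otimes\psi_2\bigr)\,H(s),
\]
with $H$ absolutely convergent in $\Re s>\tfrac m2-\tfrac12$.
\item The only pole in the relevant range is at $s=m/2$, coming from $\zeta(s-\tfrac m2+1)$ when $\psi_1=\chi_0$. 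Its residue produces the main term $D_{j,f,m}x^{m/2}$, which collects $\prod_i L(1,\mathrm{sym}^{2i}f)\,L(m/2,\mathrm{sym}^{2i}f\otimes\psi_2)$. The $2$-adic modifications only change the Euler factor at $p=2$.
\item Apply Perron's formula and shift the contour to $\Re s=\tfrac m2-1+\tfrac12+\epsilon$. The product $\prod_{i}L(\sigma+it,\mathrm{sym}^{2i}f)$ has total degree $(j+1)^2$. Using the convexity/subconvexity bounds and mean values already employed for Theorems \ref{thm:main1} and \ref{thm:main1(468)} yields the error $x^{m/2-2/(j+1)^2+\epsilon}$. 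The optimisation of $T$ is exactly as before, shifted by $\tfrac m2-1$, so this step reuses the earlier lemmas verbatim.
\end{enumerate}

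\textbf{Cuspidal part.} For $m=10$ the extra term is a multiple of $\sum_{a^2+b^2=n}(a^4-6a^2b^2+b^4)$, i.e.\ the coefficients $c(n)$ of a weight-$5$ CM newform of level $4$. For $m=12$ it is a multiple of $\eta(2z)^{12}$, a weight-$6$ newform of level $4$. In both cases Deligne gives $c(n)\ll n^{(m/2-1)/2+\epsilon}$, since the cusp form has weight $m/2$. We then bound $\sum_{n\le x}\lambda_{\mathrm{sym}^jf}^2(n)c(n)$. Trivially this is only $\ll x^{m/4+1/2+\epsilon}$, i.e.\ $x^{3}$ for $m=10$ and $x^{7/2}$ for $m=12$. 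Both are already below $x^{m/2-2/(j+1)^2}$, since $m/2-(m/4+1/2)=m/4-1/2\ge 2$ while $2/(j+1)^2\le 2/9$. So the cusp contribution is absorbed into the error term with no Rankin--Selberg input needed.

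\textbf{Main obstacle.} I expect the hardest step to be the bookkeeping in the Eisenstein factorization for $m=12$. There the exact representation \eqref{eq:r_12&l_6} involves $(-1)^{n+d}$-type signs, and we must verify that $H(s)$ remains absolutely convergent in the shifted half-plane and that no further pole arises, e.g.\ from $\chi_4$-twisted factors. I would handle this by splitting $n$ according to its $2$-part, so that all sign twists become modifications of the local factor at $2$.
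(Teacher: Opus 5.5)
Your proposal is correct and follows the paper's route. The divisor-sum pieces ($l_5$ and $v_5$, resp.\ $l_6$) are handled through the factorisations of $F_{j_1}^{*(5)}$, $F_{j_2}^{*(5)}$ and $F_j^{*(6)}$. The main term comes from the pole of $\zeta(s-\frac m2+1)$ at $s=\frac m2$. The contour at $\Re s=\frac m2-\frac12+\epsilon$ is bounded exactly as in Theorem~\ref{thm:main1(468)}.

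Your one addition is the explicit treatment of the cusp-form terms $a_n$ and $b_n$, which the paper simply discards by appeal to the previous theorem. Your absolute bound $\sum_{n\le x}\lambda_{\mathrm{sym}^jf}^2(n)|c(n)|\ll x^{m/4+1/2+\epsilon}$ is the right justification for dropping them, and even the cruder $a_n,b_n\ll n^{3+\epsilon}$ would suffice here.
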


In the next theorem, we slightly improve the result for the sign changes of $\lambda_{\mathrm{sym}^jf}(n)$ in~\cite{Kaur2026}, when $n$ varies over a sum of two squares and then we also examine the number of sign changes of  $\lambda_{\mathrm{sym}^jf}(n)$ when $n$ varies over the sum of $4,6,8,10$ and $12$ squares, respectively.

\begin{theorem} \label{thm:mainsign}
    Let $f \in H_k$ and $j \geq 2$ be a fixed integer. Then, for sufficiently large $x$, the sequence $$\{ \lambda_{\mathrm{sym}^jf}(n) | n = a_1^2+a_2^2 , a_i \in \mathbb{Z} \}$$ has at least $x^{1-\delta_j}$ sign changes between $x$ and $2x$, for any $\delta_j $ with $1-\frac{1}{(j+1)^2}  < \delta_j < 1$.
\end{theorem}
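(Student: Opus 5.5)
The plan is the standard contradiction argument that turns a first-moment upper bound and a second-moment asymptotic into many sign changes, applied over short intervals. Set $r_2(n)=\#\{(a_1,a_2)\in\mathbb{Z}^2: a_1^2+a_2^2=n\}$. Every sum over $a_1^2+a_2^2\le x$ in Theorems \ref{thm:main} and \ref{thm:main1} equals $\sum_{n\le x} r_2(n)\,g(n)$, and $r_2(n)>0$ exactly when $n$ is a sum of two squares. Fix $\delta_j$ with $1-\frac{1}{(j+1)^2}<\delta_j<1$ and put $h=x^{\delta_j}$. The goal is to show that for $x$ large, every interval $(y,y+h]$ with $x\le y\le 2x$ contains some $n$ with $r_2(n)>0$ and $\lambda_{\mathrm{sym}^jf}(n)>0$, and also some such $n$ with $\lambda_{\mathrm{sym}^jf}(n)<0$. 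Cutting $[x,2x]$ into $\gg x/h=x^{1-\delta_j}$ disjoint intervals of length $h$ then gives at least $x^{1-\delta_j}$ sign changes along the sums of two squares.

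First step, the second moment on a short interval. Differencing Theorem \ref{thm:main1} at $y+h$ and $y$ gives
\[
\sum_{y<n\le y+h} r_2(n)\lambda_{\mathrm{sym}^jf}^2(n)=c_{1,j,f}h+O\bigl(x^{1-\frac{1}{(j+1)^2}+\epsilon}\bigr).
\]
Since $\delta_j>1-\frac{1}{(j+1)^2}$, we can choose $\epsilon$ small enough that the error term is $o(h)$. This needs $c_{1,j,f}>0$. I would justify this by noting that the main term is a residue at $s=1$ of the Rankin–Selberg type Dirichlet series built from $r_2(n)=4\sum_{d\mid n}\chi_4(d)$. Alternatively, positivity follows because the sum has nonnegative terms and is $\ge r_2(1)\lambda^2(1)$ contributions from many $n$; I would extract the explicit constant from the proof of Theorem \ref{thm:main1} as a product of nonvanishing $L$-values at $s=1$. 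Either way the left side is $\gg h$.

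Second step, the first moment on a short interval. Theorem \ref{thm:main} gives
\[
\sum_{y<n\le y+h} r_2(n)\lambda_{\mathrm{sym}^jf}(n)\ll x^{1-\frac{1}{j+1}+\epsilon}.
\]
Suppose, for contradiction, that $\lambda_{\mathrm{sym}^jf}(n)\ge0$ for all $n\in(y,y+h]$ with $r_2(n)>0$; the case of $\le 0$ is symmetric. Using $\lambda_{\mathrm{sym}^jf}(n)\ll n^{\epsilon}$ we get
\[
h\ll\sum r_2(n)\lambda^2(n)\le x^{\epsilon}\sum r_2(n)|\lambda(n)|=x^{\epsilon}\Bigl|\sum r_2(n)\lambda(n)\Bigr|\ll x^{1-\frac{1}{j+1}+2\epsilon}.
\]
For $j\ge2$ we have $1-\frac{1}{j+1}<1-\frac{1}{(j+1)^2}<\delta_j$, so this contradicts $h=x^{\delta_j}$ once $\epsilon$ is small. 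Hence each interval contains sums of two squares at which $\lambda_{\mathrm{sym}^jf}$ takes both signs, and in particular is nonzero.

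The main obstacle is confirming that $c_{1,j,f}\neq0$, i.e.\ that the main term in Theorem \ref{thm:main1} is genuinely positive. I would handle it by writing the generating series as $\zeta(s)L(s,\chi_4)\prod_{n=1}^{j}L(s,\mathrm{sym}^{2n}f)L(s,\mathrm{sym}^{2n}f\otimes\chi_4)$ times an absolutely convergent Euler product $H(s)$. The simple pole of $\zeta$ at $s=1$ then gives $c_{1,j,f}$ as $L(1,\chi_4)$ times the nonvanishing values $L(1,\mathrm{sym}^{2n}f)$ and $L(1,\mathrm{sym}^{2n}f\otimes\chi_4)$ times $H(1)$. $H(1)\ne0$ because its local factors are $1+O(p^{-2})$ and each factor is nonzero. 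The constant is positive since the sum has nonnegative terms and grows linearly.
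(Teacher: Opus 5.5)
Your proposal is correct and follows essentially the paper's argument: assume $\lambda_{\mathrm{sym}^jf}$ keeps one sign on the sums of two squares in a short interval of length $x^{\delta_j}$, bound the second moment there from above by $x^{\epsilon}$ times the first moment (Theorem \ref{thm:main}) and from below by $\gg x^{\delta_j}$ (Theorem \ref{thm:main1}), derive a contradiction, and then tile $(x,2x]$ with such intervals. Your extra step showing $c_{1,j,f}>0$ supplies a point the paper uses only implicitly when it invokes Lemma \ref{dominating main term}; you do this via nonvanishing of $L(1,\mathrm{sym}^{2n}f)$ and $L(1,\mathrm{sym}^{2n}f\otimes\chi_4)$, positivity of the local factors of $H$ at $s=1$, and nonnegativity of the summands.
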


\begin{theorem} \label{thm:main1(468)sign}
        Let $f \in H_k$, $j \geq 2$ be a fixed integer and $ m \in \{4,6,8, 10, 12\}$. Then, for sufficiently large $x$, the sequence
        $$\{ \lambda_{\mathrm{sym}^jf}(n) | n = \sum_{i=1}^{m}a_i^2 , a_i \in \mathbb{Z} \}$$ has at least $x^{1-\delta_j}$ sign changes between $x$ and $2x$, for any $\delta_j $ with $1-\frac{2}{(j+1)^2}  < \delta_j < 1-\frac{1}{(j+1)^2}$.
\end{theorem}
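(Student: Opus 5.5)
We follow the standard sign-change argument (as in Meher--Murty and Kaur), combining the first and second moment results over sums of $m$ squares. Write $r_m(n)$ for the number of representations of $n$ as a sum of $m$ squares. Then
\[
S_1(x)=\sum_{n\le x} r_m(n)\lambda_{\mathrm{sym}^jf}(n), \qquad S_2(x)=\sum_{n\le x} r_m(n)\lambda_{\mathrm{sym}^jf}^2(n)
\]
are exactly the sums in Theorems~\ref{thm:main(468)}, \ref{thm:main(10,12)}, \ref{thm:main1(468)} and \ref{thm:main1(10,12)}. Differencing these at $x$ and $x+h$, with $h=x^{\delta_j}$ and $x\le x+h\le 2x$, gives two estimates. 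The first is
\[
\sum_{x<n\le x+h} r_m(n)\lambda_{\mathrm{sym}^jf}(n)\ll x^{\frac m2-\frac2{j+3}+\epsilon}.
\]
For the second, we use $(x+h)^{m/2}-x^{m/2}\asymp h x^{\frac m2-1}$ together with the nonvanishing of the leading constant $c_{m,j,f}$ (respectively $D_{j,f,m}$), which is positive since $S_2$ is a sum of nonnegative terms. This gives
\[
\sum_{x<n\le x+h} r_m(n)\lambda_{\mathrm{sym}^jf}^2(n)= c\,h x^{\frac m2-1}+O\bigl(x^{\frac m2-\frac2{(j+1)^2}+\epsilon}\bigr)\gg h x^{\frac m2-1},
\]
provided $hx^{\frac m2-1}$ dominates the error term. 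That holds exactly when $\delta_j>1-\frac{2}{(j+1)^2}$ (after shrinking $\epsilon$), which is the lower constraint in the statement.

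We argue by contradiction. Suppose $\lambda_{\mathrm{sym}^jf}(n)$ has constant sign, say nonnegative, for all $n$ in $(x,x+h]$ with $r_m(n)>0$. Then $r_m(n)\lambda^2(n)\le \max_{x<n\le 2x}|\lambda(n)|\cdot r_m(n)\lambda(n)$. Deligne's bound gives $|\lambda_{\mathrm{sym}^jf}(n)|\le d_{j+1}(n)\ll x^{\epsilon}$, so
\[
h x^{\frac m2-1}\ll x^{\epsilon}\sum_{x<n\le x+h} r_m(n)\lambda_{\mathrm{sym}^jf}(n)\ll x^{\frac m2-\frac2{j+3}+2\epsilon}.
\]
This is a contradiction as soon as $\delta_j>1-\frac{2}{j+3}+3\epsilon$. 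For $j\ge2$ we have $(j+1)^2\ge j+3$, so $\frac{2}{(j+1)^2}\le\frac2{j+3}$ and $1-\frac2{(j+1)^2}\ge 1-\frac2{j+3}$. Hence every $\delta_j$ in the stated range satisfies both constraints once $\epsilon$ is small. The upper limit $\delta_j<1-\frac1{(j+1)^2}$ is needed only to make the count below nontrivial and consistent with Theorem~\ref{thm:mainsign}; any $\delta_j<1$ gives $h=o(x)$.

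Each interval $(x,x+h]$ must therefore contain an $n$ that is a sum of $m$ squares with $\lambda(n)>0$ and another with $\lambda(n)<0$. Splitting $[x,2x]$ into $\gg x/h=x^{1-\delta_j}$ disjoint intervals of length $h$ yields at least $x^{1-\delta_j}$ sign changes. Two points need care. The first is checking positivity of the leading constants $c_{m,j,f}$ and $D_{j,f,m}$. For $m=10,12$ the cusp-form part of $r_m$ only affects the error term, so the main term is still a positive multiple of $x^{m/2}$; positivity can also be seen directly from $S_2(x)\ge \sum_{n\le x,\,r_m(n)>0}\lambda^2(n)\gg x^{m/2}$ via a lower bound $r_m(n)\gg n^{m/2-1}$ on a positive proportion of $n$. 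The second is the uniformity of the $\epsilon$-losses. Neither looks like a serious obstacle.
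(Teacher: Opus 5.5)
Your proposal is correct and follows essentially the same argument as the paper. Both difference the first-moment bound and the second-moment asymptotic over $(x,x+h]$ with $h=x^{\delta_j}$, and use $\lambda_{\mathrm{sym}^jf}(n)\ll n^{\epsilon}$ under a no-sign-change assumption to reach $x^{\frac m2-1+\delta_j}\ll x^{\frac m2-\frac2{j+3}+\epsilon}$, contradicted via $(j+1)^2\ge j+3$, before counting disjoint intervals. You are right that the upper constraint $\delta_j<1-\frac1{(j+1)^2}$ is not needed and that positivity of the leading constant (taken for granted in the paper) should be checked, though your aside ``$\sum_{n\le x,\,r_m(n)>0}\lambda_{\mathrm{sym}^jf}^2(n)\gg x^{m/2}$'' is misstated: that unweighted sum is only $O(x^{1+\epsilon})$, so the weight $r_m(n)$ must be kept.
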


\subsection*{Organisation of the article}
This article is organised as follows. In sections \ref{sec:Prel} and \ref{sec:Imp-Lemma}, we discuss preliminaries, important lemmas, and bounds needed to prove our results. Then the proofs of the main theorems concerning the partial sums of the symmetric power $L$-functions attached to Hecke eigenforms are given in the sections \ref{sec:main0}, \ref{sec:main}, \ref{sec:main(468)}, \ref{sec:main1(0)}, \ref{sec:main1(468)}, \ref{sec:main(10,12)} and \ref{sec:main1(10,12)}. Finally, the sections \ref{sec:mainsign} and \ref{sec:main1(468)sign} deal with the number of sign changes of $\lambda_{\mathrm{sym}^jf}(n)$ over the sum of squares.

\section{Preliminaries} \label{sec:Prel}

Let
\[
r_k(n) := \#\bigl\{(n_1, n_2, \dots, n_k) \in \mathbb{Z}^k : n_1^2 + n_2^2 + \cdots + n_k^2 = n\bigr\},
\]
where we count all ordered $k$-tuples of integers $(n_1,\dots,n_k)$ satisfying the equation, 
including zeros and treating different signs and permutations as distinct. 

We now define the functions $r_m(n)$, where $m=2,4,6,8,10,12$, which are defined as follows
\begin{defn} \cite[p.~121]{Grosswald}
For any positive integer $n$, define
\begin{align} 
    r_2(n) &= 4\sum_{d \mid n}\chi_4(d), \label{eq:r_2(n)} \\
    r_4(n) &= 8\sum_{d \mid n}d, \label{eq:r_4(n)} \\
    r_6(n) &= 16\sum_{d \mid n}d^2\chi_4\left(\frac{n}{d}\right) - 4\sum_{d \mid n}d^2\chi_4(d), \label{eq:r_6(n)} \\
    r_8(n) &= 16\sum_{d \mid n}(-1)^{n+d}d^3, \label{eq:r_8(n)} \\
    r_{10}(n) &= \frac{64}{5}\left\{ \sum_{d \mid n}\chi(d')d^4 + \frac{1}{16}\sum_{d \mid n}\chi(d)d^4 \right\} + \frac{32}{5}a_n, \label{eq:r_10} \\
    r_{12}(n) &= 8\sum_{d \mid n}(-1)^{n+d+\frac{n}{d}-1}d^5 + 16b_n, \label{eq:r_12}
\end{align}
where $\chi_4$ is the non-principal Dirichlet character modulo $4$, that is,
\begin{equation}
    \chi_4(m) = 
    \begin{cases} 
         1, & \text{if } m \equiv 1 \pmod 4, \\
        -1, & \text{if } m \equiv 3 \pmod 4, \\
         0, & \text{if } m \text{ is even.}
    \end{cases}
\end{equation}
Here $a_n$ is defined via the identity
\[
\theta_2^4 \, \theta_3^2 \, \theta_4^4 = 16 \sum_{n=1}^\infty a_n q^n \qquad (q = e^{2\pi i z}),
\]
where the classical theta functions are given by
$$\theta_2= 2q^{\frac{1}{4}}\prod_{m=1}^{\infty}(1-q^{2m})(1+q^{2m})^2,$$
$$\theta_3= \prod_{m=1}^{\infty}(1-q^{2m})(1+q^{2m-1})^2,\text{ and}$$
$$\theta_4= \prod_{m=1}^{\infty}(1-q^{2m})(1-q^{2m-1})^2 \quad (|q|<1).$$ 
Similarly, $b_n$ is defined via the identity
\[
\left (\frac{\theta_1'}{\pi}\right)^4 = 16 \sum_{n=1}^\infty b_n q^n ,
\]
where 
$$ \theta_1' = 2\pi q^{1/4}\prod_{m=1}^{\infty}(1-q^{2m})^3 \quad (|q| < 1).$$
\end{defn}
\begin{defn} We define the arithmetic functions as follows.
    \begin{align} 
        l_1(n) &= \sum_{d \mid n}\chi_4(d), \label{eq:l_1(n)} \\
        l_2(n) &= \sum_{d \mid n}d, \label{eq:l_2(n)} \\
        l_3(n) &= \sum_{d \mid n}d^2\chi_4\left( \frac{n}{d} \right), \quad v_3(n) = \sum_{d \mid n} d^2 \chi_4(d), \label{eq:l_3(n)&v_3(n)} \\
        l_4(n) &= \sum_{d \mid n}(-1)^{n+d}d^3, \label{eq:l_4(n)} \\
        l_5(n) &= \sum_{d \mid n} \chi\!\left(\frac{n}{d}\right) d^4, \qquad v_5(n) = \sum_{d \mid n} \chi(d)d^4, \label{eq:l_5(n)&v_5(n)} \\
        l_6(n) &= \sum_{d \mid n}(-1)^{n+d+\frac{n}{d}-1}d^5. \label{eq:l_6(n)}
    \end{align}
\end{defn}

Observing the definitions of the arithmetic functions, we have
\begin{align} 
    r_2(n) &= 4l_1(n) \ll n^\epsilon, \label{eq:r_2&l_1} \\
    r_4(n) &= 8l_2(n) \ll n^{1+\epsilon}, \label{eq:r_4&l_2} \\
    r_6(n) &= 16l_3(n) - 4v_3(n) \ll n^{2+\epsilon}, \label{eq:r_6&l_3&v_3} \\
    r_8(n) &= 16l_4(n) \ll n^{3+\epsilon}, \label{eq:r_8&l_4} \\
    r_{10}(n) &= \frac{64}{5}l_5(n) + \frac{4}{5}v_5(n) + \frac{32}{5}a_n, \label{eq:r_10&l_5&v_5} \\
    r_{12}(n) &= 8l_6(n) + 16b_n, \label{eq:r_12&l_6}
\end{align}
where $\epsilon > 0$, and we note that in general, $r_{m}(n) \ll n^{\frac{m}{2}-1+\epsilon}$ for $m=2, 4, 6, 8, 10, 12$.

We have 
\begin{align}
    &\sum_{\substack{a_{1}^{2}+\cdots+a_{m}^{2}\leq x \\ (a_{1},\ldots,a_{m})\in\mathbb{Z}^{m}}}\lambda_{\mathrm{sym}^{j}f}\left(\sum_{i=1}^{m}a_{i}^{2}\right) \notag \\
    &\quad = \sum_{n\leq x}\lambda_{\mathrm{sym}^{j}f}(n)\sum_{\substack{n=a_{1}^{2}+\cdots+a_{m}^{2} \\ (a_{1},\ldots,a_{m})\in\mathbb{Z}^{m}}} 1 \\
    &\quad = \sum_{n\leq x}\lambda_{\mathrm{sym}^{j}f}(n)r_{m}(n). \notag
\end{align}

Now, using \eqref{eq:r_2&l_1}, \eqref{eq:r_4&l_2}, \eqref{eq:r_6&l_3&v_3},  \eqref{eq:r_8&l_4}, \eqref{eq:r_10&l_5&v_5}, \eqref{eq:r_12&l_6} and the equation above, we have
\begin{align} 
    \sum_{n\leq x}\lambda_{\mathrm{sym}^{j}f}(n)r_{2}(n) &= 4\sum_{n\leq x}\lambda_{\mathrm{sym}^{j}f}(n)l_1(n), \label{eq:Lmbda2Def} \\
    \sum_{n\leq x}\lambda_{\mathrm{sym}^{j}f}(n)r_{4}(n) &= 8\sum_{n\leq x}\lambda_{\mathrm{sym}^{j}f}(n)l_2(n), \label{eq:Lmbda4Def} \\
    \sum_{n\leq x}\lambda_{\mathrm{sym}^{j}f}(n)r_{6}(n) &= 16\sum_{n\leq x}\lambda_{\mathrm{sym}^{j}f}(n)l_3(n) - 4\sum_{n\leq x}\lambda_{\mathrm{sym}^{j}f}(n)v_3(n), \label{eq:Lmbda6Def} \\
    \sum_{n\leq x}\lambda_{\mathrm{sym}^{j}f}(n)\chi(n)r_{8}(n) &= 16\sum_{n\leq x}\lambda_{\mathrm{sym}^{j}f}(n)\chi(n)l_4(n), \label{eq:Lmbda8Def} \\
    \sum_{n\leq x}\lambda_{\mathrm{sym}^{j}f}(n)r_{10}(n) &= \frac{64}{5}\sum_{n\leq x}\lambda_{\mathrm{sym}^{j}f}(n)l_5(n) + \frac{4}{5}\sum_{n\leq x}\lambda_{\mathrm{sym}^{j}f}(n)v_5(n) + \frac{32}{5}\sum_{n\leq x}\lambda_{\mathrm{sym}^{j}f}(n)a_n, \label{eq:Lmbda10Def} \\
    \sum_{n\leq x}\lambda_{\mathrm{sym}^{j}f}(n)r_{12}(n) &= 8\sum_{n\leq x}\lambda_{\mathrm{sym}^{j}f}(n)l_{6}(n) + 16\sum_{n\leq x}\lambda_{\mathrm{sym}^{j}f}(n)b_n. \label{eq:Lmbda12Def}
\end{align}

\section{Important Lemmas and Bounds} \label{sec:Imp-Lemma}

Note that \eqref{eq:deligne2} yields  $\left|1-\frac{\alpha^{j-1}(p)\beta^i(p)}{p^s}\right| \geq 1-\frac{1}{p^\sigma}>0$  for $\Re (s)=\sigma>1$. Therefore,
\begin{align*}
        |L(s,\mathrm{sym}^jf)| &\leq \prod_{p}\prod_{i=0}^{j}\left(1-\frac{1}{p^\sigma} \right)^{-1} 
        =\prod_{i=0}^{j}\zeta(\sigma) = \zeta(\sigma)^{j+1}
        = \sum_{n=1}^{\infty}\frac{d_{j+1}(n)}{n^\sigma},
\end{align*} 
where $d_{j+1}(n)$ is the number of ways of expressing $n$ as a product of $j+1$ factors. 
Since $d_{k}(n) \le d(n)^{k-1}$ for positive integers $k$ and $n$, and since $d(n) \ll_{\epsilon} n^{\epsilon}$ for any $\epsilon > 0$, we obtain
\[
d_{k}(n) \ll_{k,\epsilon} n^{\epsilon} \quad \text{for any } \epsilon > 0.
\]
Therefore, the Dirichlet series for $L(s, \operatorname{sym}^j f)$ is absolutely convergent for $\Re(s) > 1$.

Note that \eqref{eq:deligne1} and \eqref{eq:deligne2} imply
\[
|\lambda_{\operatorname{sym}^{j}f}(n)| \le d_{j+1}(n).
\]
Consequently, for any $\epsilon>0$,
\begin{equation} \label{eq:lambdaSymbound}
|\lambda_{\operatorname{sym}^{j}f}(n)| \ll_{j,\epsilon} n^{\epsilon}.
\end{equation}

Since $\lambda_{\operatorname{sym}^{j}f}(n)$ is multiplicative, $L(s,\operatorname{sym}^{j}f)$ admits an Euler product
\begin{equation} \label{eq:eulerproduct}
L(s,\operatorname{sym}^{j}f) = \prod_{p} \Bigl( 1 + \frac{\lambda_{\operatorname{sym}^{j}f}(p)}{p^{s}} 
    + \frac{\lambda_{\operatorname{sym}^{j}f}(p^{2})}{p^{2s}} + \dots \Bigr),
\end{equation}
which is absolutely convergent for $\Re(s) > 1$.

Observe that
\begin{align} \label{eq:lambdaatp}
\lambda_{\mathrm{sym}^{j}f}(p)=\sum_{m=0}^{j}\alpha^{j-m}(p)\beta^{m}(p).
\end{align}

Moreover, Hecke theory gives the relation $\lambda_{\operatorname{sym}^{j}f}(p) = \lambda_f(p^{j}) $ for each prime $p$.
\begin{lemma}\label{lambdasquare}
Let $f$ be a Hecke eigenform with Satake parameters $\alpha_p,\beta_p$ satisfying $\alpha_p\beta_p=1$.
For any integer $j \ge 1$ and any prime $p$,
\[
\lambda_{\mathrm{sym}^{j}f}^{2}(p) \;=\; 1 \;+\; \sum_{\ell=1}^{j} \lambda_{\mathrm{sym}^{2\ell}f}(p).\]
\end{lemma}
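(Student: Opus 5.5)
The plan is to compute $\lambda_{\mathrm{sym}^{j}f}^{2}(p)$ directly from \eqref{eq:lambdaatp}. Write $\alpha=\alpha_f(p)$; by \eqref{eq:deligne1} we have $\beta=\alpha^{-1}$. Then \eqref{eq:lambdaatp} becomes
\[
\lambda_{\mathrm{sym}^{j}f}(p)=\sum_{m=0}^{j}\alpha^{j-2m}=\sum_{\substack{-j\le a\le j\\ a\equiv j \ (\mathrm{mod}\ 2)}}\alpha^{a}.
\]
For every $n\ge 0$, $\lambda_{\mathrm{sym}^{n}f}(p)=S_n:=\sum_{m=0}^{n}\alpha^{n-2m}$, the character of the $(n+1)$-dimensional irreducible representation of $\mathrm{SU}(2)$ evaluated at $\mathrm{diag}(\alpha,\alpha^{-1})$. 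The statement is then the Clebsch--Gordan decomposition $\mathrm{Sym}^j\otimes\mathrm{Sym}^j\cong\bigoplus_{\ell=0}^{j}\mathrm{Sym}^{2\ell}$. Note that $S_0=1$, so the right-hand side of the lemma equals $\sum_{\ell=0}^{j}S_{2\ell}$.

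To keep the argument self-contained, I will prove the Laurent polynomial identity
\[
S_j^{2}=\sum_{\ell=0}^{j}S_{2\ell}
\]
by comparing coefficients. Multiplying out gives $S_j^2=\sum_{m,m'=0}^{j}\alpha^{2j-2(m+m')}$. Set $t=j-(m+m')$, so the exponent is $2t$ with $-j\le t\le j$. The number of pairs $(m,m')\in[0,j]^2$ with $m+m'=j-t$ is $j+1-|t|$, so the coefficient of $\alpha^{2t}$ in $S_j^2$ is $j+1-|t|$.

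On the other side, $S_{2\ell}=\sum_{u=-\ell}^{\ell}\alpha^{2u}$. The coefficient of $\alpha^{2t}$ in $\sum_{\ell=0}^{j}S_{2\ell}$ is therefore $\#\{\ell: |t|\le\ell\le j\}=j+1-|t|$. The coefficients agree, so the identity holds.

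Alternatively, one can induct on $j$ using $S_{j+1}=S_1S_j-S_{j-1}$, which is \eqref{recurrence} combined with $\lambda_{\mathrm{sym}^jf}(p)=\lambda_f(p^j)$. That route is messier, so I prefer the coefficient count. The identity only needs $\alpha\beta=1$, not $|\alpha|=1$, which matches the hypotheses of the lemma.

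Nothing here is a serious obstacle. The only point needing care is the counting of pairs $(m,m')$ with fixed sum, where one must check that the boundary cases $t=\pm j$ give exactly one pair. Substituting $\alpha=\alpha_f(p)$ back in then yields the lemma.
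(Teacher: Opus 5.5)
Your proposal is correct and follows essentially the same route as the paper. Both expand the square, count the pairs $(m,m')$ with a fixed sum to get the coefficient $j+1-|t|$ of $\alpha^{2t}$, and match it against the number of $\ell$ with $|t|\le \ell\le j$. The differences are cosmetic: you index by $|t|$ directly instead of using the paper's symmetrization $N_t=N_{2j-t}$, and you absorb the constant $1$ as $S_0$.
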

\begin{proof}
Write $\lambda := \lambda_{\mathrm{sym}^{j}f}(p) = \sum_{m=0}^{j} \alpha_p^{\,j-m}\beta_p^{\,m}$.
Then, using $\beta_p = \alpha_p^{-1}$, we obtain
\[
\lambda^{2} = \sum_{m=0}^{j}\sum_{m'=0}^{j} \alpha_p^{\,2j-(m+m')}\beta_p^{\,m+m'}
      = \sum_{t=0}^{2j} N_t \, \alpha_p^{\,2j-t}\beta_p^{\,t}= \sum_{t=0}^{2j} N_t \, \alpha_p^{\,2j-2t},
\]
where $N_t$ counts pairs $(m,m')$ with $m+m'=t$ and $0\le m,m'\le j$.
One has $N_t = t+1$ for $0\le t\le j$ and $N_t = 2j-t+1$ for $j<t\le 2j$. In particular, $N_t = N_{2j-t}$ for $0\le t\le 2j$. Therefore, 
\begin{align}
    \lambda^{2} = &\sum_{t=0}^{2j} N_t \, \alpha_p^{\,2j-2t}\\
      =&N_{j}+ \sum_{t=0}^{j-1} N_t \, \alpha_p^{\,2j-2t}+ \sum_{t=j+1}^{2j} N_{2j-t} \, \alpha_p^{\,2j-2t}\quad (\text{since } N_t = N_{2j-t})\\
      =&N_{j}+ \sum_{t=0}^{j-1} N_t \, \alpha_p^{\,2j-2t}+ \sum_{t=0}^{j-1} N_{t} \, \alpha_p^{\,-2j+2t}\\
      =&N_{j}+ \sum_{k=1}^{j} N_{j-k} \, \left(\alpha_p^{\,2k}+\alpha_p^{\,-2k}\right)\\
      =& (j+1)+ \sum_{k=1}^{j} (j-k+1) \, \left(\alpha_p^{\,2k}+\alpha_p^{\,-2k}\right).
\end{align}
On the other hand, we have
\[
\sum_{l=1}^{j}\lambda_{\mathrm{sym}^{2\ell}f}(p) = \sum_{l=1}^{j}\sum_{u=0}^{2\ell} \alpha_p^{\,2\ell-u}\beta_p^{\,u}= \sum_{l=1}^{j}\sum_{u=0}^{2\ell} \alpha_p^{\,2\ell-2u} = \sum_{l=1}^{j}\sum_{m=-\ell}^{\ell} \alpha_p^{\,2m}.
\]
In the double sum, for \( k \geq 1 \), the term \( \alpha_p^{\,2k} \) (and similarly \( \alpha_p^{\,-2k} \)) occurs once for each \( \ell \) satisfying \( \ell \ge k \), that is, for \( \ell = k, k+1, \dots, j \). Thus, the coefficient of \( \alpha_p^{\,2k} \) or \( \alpha_p^{\,-2k} \) is \( j-k+1 \).
The constant term $\alpha_p^{0}=1$ occurs in every $\lambda_{\operatorname{sym}^{2\ell} f}(p)$ for $\ell = 1, \dots, j$, giving total multiplicity $j$. Therefore,
\[
\sum_{l=1}^{j}\lambda_{\mathrm{sym}^{2\ell}f}(p) = \sum_{l=1}^{j}\sum_{m=-\ell}^{\ell} \alpha_p^{\,2m}=j+ \sum_{k=1}^{j} (j-k+1) \, \left(\alpha_p^{\,2k}+\alpha_p^{\,-2k}\right).
\]This completes the proof.
\end{proof}

We define
\begin{align} \label{lem:F0}
F_{j}^{(0)}(s)=\sum_{n=1}^{\infty}\frac{\lambda_{\operatorname{sym}^{j}f}(n)\,}{n^{s}} = L(s, \mathrm{sym}^jf), \qquad \Re(s)>1.    
\end{align}

\begin{lemma} \label{lem:F1}
Let \(f\) be a normalised primitive holomorphic cusp form of weight \(k\) for \(\operatorname{SL}(2,\mathbb{Z})\), 
and let \(\lambda_{\operatorname{sym}^{j}f}(n)\) denote the \(n\)th normalised Fourier coefficient 
of the \(j\)th symmetric power \(L\)-function attached to \(f\). Define
\[
F_{j}^{(1)}(s)=\sum_{n=1}^{\infty}\frac{\lambda_{\operatorname{sym}^{j}f}(n)\,l_1(n)}{n^{s}}, \qquad \Re(s)>1,
\]
where \(l_1(n)\) is given by \eqref{eq:l_1(n)}. Then \(F_{j}^{(1)}(s)\) admits a factorisation
\[
F_{j}^{(1)}(s)=G_{j}^{(1)}(s)\,H_{j}^{(1)}(s),
\]
in which
\[
G_{j}^{(1)}(s):=L\!\bigl(s,\operatorname{sym}^{j}f\bigr)\; 
        L\!\bigl(s,\operatorname{sym}^{j}f\otimes\chi_4\bigr),
\]
where \(\chi_4\) is the unique non-principal Dirichlet character modulo \(4\), 
and \(H_{j}^{(1)}(s)\) is a Dirichlet series converging absolutely and uniformly 
in the half-plane \(\Re(s)>\frac{1}{2}\).
\end{lemma}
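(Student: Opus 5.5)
We compare Euler factors prime by prime. Since $\lambda_{\mathrm{sym}^jf}(n)$ and $l_1(n)=\sum_{d\mid n}\chi_4(d)$ are both multiplicative, so is their product. Hence for $\Re(s)>1$ we may write
\[
F_j^{(1)}(s)=\prod_p\sum_{r\ge0}\frac{\lambda_{\mathrm{sym}^jf}(p^r)l_1(p^r)}{p^{rs}}.
\]
The function $G_j^{(1)}(s)$ also has an Euler product, with local factor
\[
\prod_{m=0}^{j}\bigl(1-\alpha^m\beta^{j-m}p^{-s}\bigr)^{-1}\bigl(1-\chi_4(p)\alpha^m\beta^{j-m}p^{-s}\bigr)^{-1}.
\]
We define $H_j^{(1)}(s):=F_j^{(1)}(s)/G_j^{(1)}(s)$, taken as the Euler product of the quotients of local factors. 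It then suffices to show that each local factor of $H_j^{(1)}$ has the form $1+O(p^{-2\sigma+\epsilon})$, with no $p^{-s}$ term and with uniform bounds on the higher coefficients.

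The key step is to compare the $p^{-s}$ coefficients. We have $l_1(p)=1+\chi_4(p)$, so the $p^{-s}$ coefficient of $F_j^{(1)}$ is $\lambda_{\mathrm{sym}^jf}(p)(1+\chi_4(p))$. Expanding the local factor of $G_j^{(1)}$, its $p^{-s}$ coefficient is $\lambda_{\mathrm{sym}^jf}(p)+\chi_4(p)\lambda_{\mathrm{sym}^jf}(p)$, using \eqref{eq:lambdaatp}. These agree, so the local factor of $H_j^{(1)}$ is
\[
1+\sum_{r\ge2}h(p^r)p^{-rs}.
\]
For $p=2$ this needs no separate argument: $\chi_4(2)=0$ and $l_1(2^r)=1$, so the factor of $F_j^{(1)}$ at $2$ coincides with the local factor of $L(s,\mathrm{sym}^jf)$. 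Hence $H_j^{(1)}$ has trivial factor at $2$.

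Next we bound the coefficients $h(p^r)$. By Deligne's bounds \eqref{eq:deligne2} we have $|\lambda_{\mathrm{sym}^jf}(p^r)|\le d_{j+1}(p^r)\le (r+1)^j$, and $|l_1(p^r)|\le r+1$. The coefficients of the inverse local factor of $G_j^{(1)}$, namely $\prod(1-\cdot\,p^{-s})$, are bounded by binomial coefficients depending only on $j$. Multiplying these together gives
\[
|h(p^r)|\le C_j\,(r+1)^{A_j}
\]
for constants $C_j,A_j$ independent of $p$. We could instead write $h(n)$ for the multiplicative function with these local values; then $h(n)\ll n^{\epsilon}$ and $h$ is supported on squarefull $n$.

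Finally, for $\sigma\ge \tfrac12+\delta$ we get
\[
\sum_p\sum_{r\ge2}|h(p^r)|p^{-r\sigma}\ll\sum_p p^{-1-2\delta}<\infty,
\]
so both the product and the Dirichlet series $\sum_n h(n)n^{-s}$ converge absolutely, and uniformly in $\Re(s)\ge\tfrac12+\delta$. Equivalently, $\sum_{n\text{ squarefull}}n^{-\sigma+\epsilon}$ converges for $\sigma>1/2$.

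I expect the main care to be needed in the coefficient bound for $h(p^r)$, which must be uniform in $p$ and polynomial in $r$, so that the tail $r\ge2$ is dominated by $p^{-2\sigma}$. The Deligne bounds make this routine.
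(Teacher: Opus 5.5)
Your proposal is correct and follows essentially the same route as the paper. Both arguments compare Euler factors and note that the $p^{-s}$ coefficients of $F_j^{(1)}$ and $G_j^{(1)}$ agree because $l_1(p)=1+\chi_4(p)$; both then conclude that $H_j^{(1)}$ is supported on squarefull integers with coefficients $\ll n^{\epsilon}$, so it converges absolutely for $\Re(s)>\tfrac12$. Your one refinement is to multiply by the explicit polynomial inverse (of degree $2j+2$) of the local factor of $G_j^{(1)}$. This gives the bound $|h(p^r)|\le C_j(r+1)^{A_j}$ uniformly in $p$ more cleanly than the paper's geometric-series expansion of $(1+A)/(1+B)$, which is only justified a priori for $\Re(s)>1+2\epsilon$.
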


\begin{proof}
Here we follow the steps as in~\cite{SharmaSankaranarayanan2023}. We know that
\[
\lambda_{\operatorname{sym}^{j}f}(n)\,l_1(n) \ll n^{\epsilon} \qquad (\epsilon > 0),
\]
which implies that the Dirichlet series \(F_j^{(1)}(s)\) converges absolutely for \(\Re(s) > 1\). Since \(\lambda_{\operatorname{sym}^{j}f}(n)\) is multiplicative, \(F_j^{(1)}(s)\) therefore admits an Euler product in this half-plane in \(\Re(s) > 1\):
\begin{align} \label{eq:FjEuler1}
    F_{j}^{(1)}(s)=\prod_{p}
        \Bigl(1+
        \frac{\lambda_{\operatorname{sym}^{j}f}(p)\,l_1(p)}{p^{s}}+
        \frac{\lambda_{\operatorname{sym}^{j}f}(p^{2})\,l_1(p^{2})}{p^{2s}}+
        \cdots
        +\frac{\lambda_{\operatorname{sym}^{j}f}(p^{m})\,l_(p^{m})}{p^{ms}}+
        \cdots\Bigr).
\end{align}

Now define the multiplicative function \(b_1(n)\) via its Euler product
\[
\sum_{n=1}^{\infty}\frac{b_1(n)}{n^{s}} 
    := L(s,\operatorname{sym}^{j}f)\;L(s,\operatorname{sym}^{j}f\otimes\chi_4)
    (=: G_{j}(s)),
\]
Therefore, for a prime \(p\), we have $b_1(p) = \lambda_{\operatorname{sym}^{j}f}(p)\, 
       + \lambda_{\operatorname{sym}^{j}f}(p)\,\chi_4.$ Since \(l_1(p) = 1+\chi_4(p)\), we obtain that $b_1(p) = \lambda_{\operatorname{sym}^{j}f}(p)\,l_1(p)$, establishing the desired equality at each prime.

Note that $b_1(p^k) \neq \lambda_{\mathrm{sym}^jf}(p^k)l(p^k) \text{ for all } k>1$ and 
        \begin{align*}
            |b_1(n)| 
            =|(\lambda_{\mathrm{sym}^jf}*\lambda_{\mathrm{sym}^jf}\chi_{4}(n)|
            &\leq \sum_{d|n}|\lambda_{\mathrm{sym}^jf}(d)||\lambda_{\mathrm{sym}^jf}\left(\frac{n}{d}\right)\chi_{4}\left(\frac{n}{d}\right)| \\
            & \leq \sum_{d|n}d^{\epsilon}\left(\frac{n}{d}\right)^\epsilon \leq n^{\epsilon}d(n) \ll_\epsilon n^{\epsilon} \text{ for any } \epsilon >0.
        \end{align*}
        So $\displaystyle\sum_{n=1}^{\infty}\frac{b_1(n)}{n^s}$ is absolutely convergent by $\Re(s) > 1$ and the Euler product ensures that $$\sum_{n=1}^{\infty}\frac{b_1(n)}{n^s} = \prod_p \left(1+\sum_{m \geq1}\frac{b_1(p^m)}{p^{ms}}\right)\quad (\Re (s)>1).$$
        Now, \begin{align*}
            \bigg|\sum_{m=1}^{\infty}\frac{b_1(p^m)}{p^{ms}}\bigg| &\leq \sum_{m=1}^{\infty}\frac{p^{\epsilon m}}{p^{m\sigma}} \leq \sum_{m=1}^{\infty}\frac{p^{\epsilon m}}{p^{(1+2\epsilon)m}} = \sum_{m=1}^{\infty} \frac{1}{p^{m(1+\epsilon)}} = \frac{1}{p^{1+\epsilon}-1} <1 
        \end{align*}
        for $\Re(s) > 1+2\epsilon$.
        
       Let 
\begin{align*}
    A &= \sum_{m=1}^{\infty} \frac{\lambda_{\mathrm{sym}^jf}(p^m)l_1(p^m)}{p^{ms}}, \quad \text{and} \\
    B &= \sum_{m=1}^{\infty}\frac{b_1(p^m)}{p^{ms}} \quad (|B| < 1).
\end{align*}
Therefore,
\begin{align*}
    \frac{1+A}{1+B} 
    &= (1+A)(1 - B + B^2 - \cdots) \\
    &= 1 + A - B - AB + \cdots \\
    &= 1 + \frac{\lambda_{\mathrm{sym}^jf}(p^2)l_1(p^2)-b_1(p^2)}{p^{2s}} + \cdots + \frac{c(p^m)}{p^{ms}} + \cdots \\
    &= \sum_{n \geq 1} \frac{c_p(n)}{n^s} \quad \text{(say)},
\end{align*}
where
\begin{equation*}
    c_p(n) = 
    \begin{cases}
        1, & \text{if } n=1, \\
        c(n), & \text{if } n = p^m \; (m \geq 2), \\
        0, & \text{otherwise.}
    \end{cases}
\end{equation*}

Note that the above equality holds for $\Re(s) > 1+2\epsilon$ for all $\epsilon > 0$, and that the series is absolutely convergent in this region.
        Also note that $c_p(n) \ll n^\epsilon$ for all $\epsilon > 0$.
        We define $c(n)$ for any $n \in \mathbb{N}$ by $$\prod_{p}\frac{1+A}{1+B} = \prod_p\left( 1+ \sum_{m \geq 1}\frac{c(p^m)}{p^{ms}}\right) = \sum_{n=1}^{\infty}\frac{c(n)}{n^s}.$$
        By construction, $c(n)$ is multiplicative.

     Define \begin{align*}
             H_j^{(1)}(s) : = \frac{F_j^{(1)}(s)}{G_j^{(1)}(s)} &=\prod_{p} \frac{1+ \sum_{m \geq 1}\frac{\lambda_{\mathrm{sym}^jf}(p^m)l_1(p^m)}{p^{ms}}}{1+\sum_{m\geq1}\frac{b(p^m)}{p^{ms}}} \\
             &= \prod_p \frac{1+A}{1+B} = \sum_{n=1}^{\infty}\frac{c(n)}{n^s}.
         \end{align*}

         We now find the region of convergence for $H_j^{(1)}(s)$. Note that \begin{align*}
             \sum_{m \geq 3}\bigg|\frac{c(p^m)}{p^{ms}}\bigg| &\leq \sum_{m\geq 3}\frac{p^{m\epsilon}}{p^{m\sigma}} = \sum_{m \geq 3} \frac{1}{p^{m(\sigma - \epsilon)}} \\
             &= \frac{1}{p^{2(\sigma-\epsilon)}(p^{\sigma -\epsilon}-1)} < \frac{1}{p^{2(\sigma-\epsilon)}} \text{ for any } \epsilon >0.
         \end{align*} The above inequality of the series is true for $\Re(s)> 1+\epsilon$, and\begin{align*}
             \frac{c(p^2)}{p^{2\sigma}} = \frac{\lambda_{\mathrm{sym}^jf}(p^2)l(p^2) - b(p^2)}{p^{2\sigma}} 
             = O\left(\frac{p^{2\epsilon}}{p^{2\sigma}}\right) = O\left(\frac{1}{p^{2\sigma-2\epsilon}}\right).
         \end{align*}

         Now $\prod_p\left(1 + \bigg|\frac{c(p^2)}{p^{2s}}\bigg|+ \sum_{m\geq 3} \bigg|\frac{c(p^m)}{p^{ms}}\bigg|\right) = \prod_p\left(1+u_p\right)$ is convergent if and only if $\sum_pu_p$ is convergent, where $u_p = |\frac{c(p^2)}{p^{2s}}|+\sum_{m \geq 3}|\frac
         {c(p^m)}{p^{ms}}|$. Note that $$ \sum_pu_p \ll  \sum_p \frac{1}{p^{2\sigma -2\epsilon}} $$ is absolutely convergent for $2\sigma -2\epsilon > 1$, that is, in the region $\sigma > \frac{1}{2}+\epsilon$ for any $\epsilon >0$. So in this region $H_j^{(1)}(s) \ll_\epsilon 1$ and $H_j^{(1)}(s)$ is absolutely convergent in $\Re(s) > \frac{1}{2}$. 
\end{proof}
The proofs of the following lemmas proceed along the same lines as those in~\cite{Kaur2026, SharmaSankaranarayanan2023} and are therefore omitted.
\begin{lemma} \label{lem:F2}
Let \(f\) be a normalised primitive holomorphic cusp form of weight \(k\) for \(\operatorname{SL}(2,\mathbb{Z})\), 
and let \(\lambda_{\operatorname{sym}^{j}f}(n)\) denote the \(n\)th normalised Fourier coefficient 
of the \(j\)th symmetric power \(L\)-function attached to \(f\).  Define
\[
F_{j}^{(2)}(s)=\sum_{n=1}^{\infty}\frac{\lambda_{\operatorname{sym}^{j}f}(n)\,l_2(n)}{n^{s}}, \qquad \Re(s)>2,
\]
where \(l_2(n)\) is given by \eqref{eq:l_2(n)}. Then \(F_{j}^{(2)}(s)\) admits a factorisation
\[
F_{j}^{(2)}(s)=G_{j}^{(2)}(s)\,H_{j}^{(2)}(s),
\]
in which
\[
G_{j}^{(2)}(s):=L\!\bigl(s,\operatorname{sym}^{j}f\bigr)\; 
        L\!\bigl(s-1,\operatorname{sym}^{j}f\bigr),
\]
and \(H_{j}^{(2)}(s)\) is a Dirichlet series converging absolutely and uniformly 
in the half-plane \(\Re(s)>\frac{3}{2}\).
\end{lemma}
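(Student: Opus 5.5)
We follow the template of the proof of Lemma~\ref{lem:F1}. The plan is to match the Euler factors of $F_j^{(2)}$ and $G_j^{(2)}$ at primes, and then show that the quotient $H_j^{(2)}$ has no terms at primes. Its remaining terms begin at $p^{2s}$, and their size gives convergence for $\Re(s)>\tfrac32$.

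First we establish absolute convergence and the Euler products. Since $l_2(n)=\sigma(n)\le n\,d(n)\ll n^{1+\epsilon}$ and $\lambda_{\mathrm{sym}^jf}(n)\ll n^{\epsilon}$ by \eqref{eq:lambdaSymbound}, the product $\lambda_{\mathrm{sym}^jf}(n)l_2(n)$ is $\ll n^{1+\epsilon}$. Hence $F_j^{(2)}(s)$ converges absolutely for $\Re(s)>2$. Because both factors are multiplicative, it has an Euler product there. Next define $b_2(n)$ by
\[
\sum_{n\ge1}\frac{b_2(n)}{n^s}=L(s,\mathrm{sym}^jf)L(s-1,\mathrm{sym}^jf),
\]
so that $b_2(n)=\sum_{d\mid n}\lambda_{\mathrm{sym}^jf}(d)\lambda_{\mathrm{sym}^jf}(n/d)\,(n/d)$. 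From this, $|b_2(n)|\le n\,d(n)\,n^{\epsilon}\ll n^{1+\epsilon}$. At a prime,
\[
b_2(p)=\lambda_{\mathrm{sym}^jf}(p)+p\,\lambda_{\mathrm{sym}^jf}(p)=\lambda_{\mathrm{sym}^jf}(p)(1+p)=\lambda_{\mathrm{sym}^jf}(p)l_2(p).
\]
This is the key coincidence of the first coefficients.

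Next we form the local quotient. Put
\[
A=\sum_{m\ge1}\frac{\lambda_{\mathrm{sym}^jf}(p^m)l_2(p^m)}{p^{ms}},\qquad B=\sum_{m\ge1}\frac{b_2(p^m)}{p^{ms}}.
\]
For $\Re(s)>2+2\epsilon$ the bound $b_2(p^m)\ll p^{m(1+\epsilon)}$ gives $|B|<1$. We then expand $(1+A)/(1+B)=(1+A)(1-B+B^2-\cdots)$. The $p^{-s}$ terms cancel by the previous step, so
\[
\frac{1+A}{1+B}=1+\sum_{m\ge2}\frac{c(p^m)}{p^{ms}}.
\]
Every coefficient of $A$ and $B$ at $p^{-ms}$ is $\ll p^{m(1+\epsilon)}$. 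The expansion is a finite combination of such products at each level $m$, so $c(p^m)\ll p^{m(1+\epsilon)}$, with implied constant depending on $m$ only through a geometric-type growth. We then define $H_j^{(2)}(s)=\prod_p(1+A)/(1+B)=\sum_n c(n)n^{-s}$, which is multiplicative by construction, so that $F_j^{(2)}=G_j^{(2)}H_j^{(2)}$ on $\Re(s)>2$.

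The main step is extending convergence of $H_j^{(2)}$ down to $\Re(s)>\tfrac32$. By absolute convergence of Euler products, it suffices to show $\sum_p u_p<\infty$, where
\[
u_p=\sum_{m\ge2}\frac{|c(p^m)|}{p^{m\sigma}}.
\]
Using $|c(p^m)|\ll p^{m(1+\epsilon)}$, we get
\[
u_p\ll\sum_{m\ge2}p^{-m(\sigma-1-\epsilon)}\ll p^{-2(\sigma-1-\epsilon)}
\]
whenever $\sigma-1-\epsilon>0$, and the geometric tail is controlled. Then $\sum_p u_p$ converges once $2(\sigma-1-\epsilon)>1$, i.e. $\sigma>\tfrac32+\epsilon$. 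This yields absolute and uniform convergence, and $H_j^{(2)}(s)\ll_\epsilon 1$, on $\Re(s)\ge\tfrac32+2\epsilon$.

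The obstacle is justifying $|c(p^m)|\ll p^{m(1+\epsilon)}$ uniformly, since the expansion of $1/(1+B)$ was only derived for $\sigma>2$. The first thing to try is to avoid the series in $B$ altogether. The local factor of $G_j^{(2)}$ is the explicit reciprocal polynomial
\[
\prod_{i=0}^{j}\bigl(1-\alpha^{j-i}\beta^{i}p^{-s}\bigr)\bigl(1-\alpha^{j-i}\beta^{i}p^{1-s}\bigr).
\]
Then $(1+A)$ times this polynomial has coefficients that are polynomial in $p$ with Satake-bounded weights. Combined with $l_2(p^m)\le 2p^m$, this gives the needed bound directly, and its analytic extension holds for $\sigma>1$.
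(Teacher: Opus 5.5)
Your proposal is correct and follows essentially the same route as the paper, which omits this proof but points to the template of Lemma~\ref{lem:F1}: you match the prime coefficients via $b_2(p)=\lambda_{\mathrm{sym}^jf}(p)(1+p)=\lambda_{\mathrm{sym}^jf}(p)l_2(p)$, form the local quotient $(1+A)/(1+B)$, and sum $u_p\ll p^{-2(\sigma-1-\epsilon)}$ over primes. Your closing device, multiplying $1+A$ by the explicit inverse Euler factor of $G_j^{(2)}$, is the right way to make the coefficient bound rigorous: it gives $c(p)=0$ and $|c(p^m)|\ll_j m^{j}p^{m}$ uniformly in $p$ and $m$, and so avoids the exponential-in-$m$ loss from expanding $\sum_k(-B)^k$, a point the paper's template glosses over.
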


\begin{lemma} \label{lem:F3'}
Let \(f\) be a normalised primitive holomorphic cusp form of weight \(k\) for \(\operatorname{SL}(2,\mathbb{Z})\), 
and let \(\lambda_{\operatorname{sym}^{j}f}(n)\) denote the \(n\)th normalised Fourier coefficient 
of the \(j\)th symmetric power \(L\)-function attached to \(f\). Define
\[
F_{j_1}^{(3)}(s)=\sum_{n=1}^{\infty}\frac{\lambda_{\operatorname{sym}^{j}f}(n)\,l_3(n)}{n^{s}}, \qquad \Re(s)>3,
\]
where \(l_3(n)\) is given by \eqref{eq:l_3(n)&v_3(n)}.  Then \(F_{j_1}^{(3)}(s)\) admits a factorisation
\[
F_{j_1}^{(3)}(s)=G_{j_1}^{(3)}(s)\,H_{j_1}^{(3)}(s),
\]
in which
\[
G_{j_1}^{(3)}(s):=L\!\bigl(s,\operatorname{sym}^{j}f \otimes \chi_4\bigr)\; 
        L\!\bigl(s-2,\operatorname{sym}^{j}f\bigr),
\]
and \(H_{j_1}^{(3)}(s)\) is a Dirichlet series converging absolutely and uniformly 
in the half-plane \(\Re(s)>\frac{5}{2}\).
\end{lemma}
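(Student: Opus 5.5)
We follow the proof of Lemma~\ref{lem:F1}, comparing Euler factors prime by prime. Since $l_3(n)=\sum_{d\mid n}d^2\chi_4(n/d)$ is the Dirichlet convolution of $n\mapsto n^2$ with $\chi_4$, it is multiplicative and satisfies $l_3(n)\le \sigma_2(n)\ll n^{2+\epsilon}$. Together with \eqref{eq:lambdaSymbound}, this gives $\lambda_{\mathrm{sym}^jf}(n)l_3(n)\ll n^{2+\epsilon}$. Hence $F_{j_1}^{(3)}(s)$ converges absolutely for $\Re(s)>3$ and has an Euler product there, because $\lambda_{\mathrm{sym}^jf}\,l_3$ is multiplicative.

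Next define $b_3(n)$ by
\[
\sum_{n\ge1} \frac{b_3(n)}{n^s} := L(s,\mathrm{sym}^jf\otimes\chi_4)\,L(s-2,\mathrm{sym}^jf).
\]
This means $b_3(n)=\sum_{d\mid n}\lambda_{\mathrm{sym}^jf}(n/d)\chi_4(n/d)\,\lambda_{\mathrm{sym}^jf}(d)d^2$, so $b_3(n)\ll n^{2+\epsilon}$. At a prime,
\[
b_3(p)=\lambda_{\mathrm{sym}^jf}(p)\chi_4(p)+p^2\lambda_{\mathrm{sym}^jf}(p)=\lambda_{\mathrm{sym}^jf}(p)\bigl(p^2+\chi_4(p)\bigr)=\lambda_{\mathrm{sym}^jf}(p)\,l_3(p),
\]
since $l_3(p)=\chi_4(p)+p^2$. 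So the local factors of $F_{j_1}^{(3)}$ and $G_{j_1}^{(3)}$ agree up to the $p^{-s}$ term.

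Write the local factors as $1+A_p$ and $1+B_p$, with $A_p=\sum_{m\ge1}\lambda_{\mathrm{sym}^jf}(p^m)l_3(p^m)p^{-ms}$ and $B_p=\sum_{m\ge1} b_3(p^m)p^{-ms}$. For $\Re(s)>3+2\epsilon$ we have $|B_p|\le \sum_m p^{m(2+\epsilon-\sigma)}<1$, so we can expand
\[
\frac{1+A_p}{1+B_p}=1+\sum_{m\ge2}\frac{c_3(p^m)}{p^{ms}}.
\]
There is no $p^{-s}$ term, by the computation above. The coefficients satisfy $c_3(p^m)\ll p^{m(2+\epsilon)}$, because each coefficient is a finite combination of products of terms that are each $\ll p^{(\text{exponent})(2+\epsilon)}$. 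Setting $H_{j_1}^{(3)}(s)=\prod_p (1+A_p)/(1+B_p)=\sum_n c_3(n)n^{-s}$, we get the factorisation $F_{j_1}^{(3)}=G_{j_1}^{(3)}H_{j_1}^{(3)}$ in $\Re(s)>3$.

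It remains to extend the absolute convergence of $H_{j_1}^{(3)}$ to $\Re(s)>5/2$. Let $\sigma=\Re(s)$. The local sum is bounded by
\[
\sum_{m\ge2}|c_3(p^m)|p^{-m\sigma}\ll\sum_{m\ge2}p^{-m(\sigma-2-\epsilon)}\ll p^{-2(\sigma-2-\epsilon)},
\]
and $\sum_p p^{-2(\sigma-2-\epsilon)}$ converges when $2(\sigma-2-\epsilon)>1$, that is, when $\sigma>5/2+\epsilon$. Since $\epsilon$ is arbitrary, the product converges absolutely and uniformly on compacta in $\Re(s)>5/2$.

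The main obstacle is the bound $c_3(p^m)\ll p^{m(2+\epsilon)}$ uniformly in $m$, with the implied constant independent of $p$. The formal expansion mixes $A_p$ and powers of $B_p$, and the number of terms grows with $m$. To control it, we will first normalise by substituting $s=w+2$, i.e. consider $\lambda_{\mathrm{sym}^jf}(n)\,l_3(n)/n^2$. The normalised coefficients are then $\ll n^{\epsilon}$, bounded by a divisor-type function, and the argument of Lemma~\ref{lem:F1} applies verbatim with $1/2$ replaced by $1/2+2$.
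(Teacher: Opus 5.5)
Your outline follows the route the paper intends: the proof is omitted there and refers back to the argument of Lemma~\ref{lem:F1}. Most of it is right, namely multiplicativity, $l_3(p)=p^2+\chi_4(p)$ so that $b_3(p)=\lambda_{\mathrm{sym}^jf}(p)\,l_3(p)$, the absence of a $p^{-s}$ term in $(1+A_p)/(1+B_p)$, and the final count via $\sum_p p^{-2(\sigma-2-\epsilon)}$.

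\textbf{The gap} is the step you flag yourself and then do not close: the bound $c_3(p^m)\ll p^{m(2+\epsilon)}$ uniformly in $p$ and $m$.
\begin{itemize}
\item Your justification (``finite combination of products'') controls neither the number of terms produced by $\sum_k(-B_p)^k$, which grows exponentially in $m$, nor the accumulated implied constants.
\item The substitution $s=w+2$ is only a rescaling, so it cannot supply the bound.
\item Divisor-bounded coefficients that agree at primes do not force a divisor-bounded quotient. With $A_2=0$ and $B_2=10\cdot 2^{-2s}$ one gets $c(4^k)=(-10)^k$, and that local series diverges for $\Re(s)<\log 10/\log 4$.
\item Invoking Lemma~\ref{lem:F1} ``verbatim'' therefore just imports the unproved assertion $c_p(n)\ll n^{\epsilon}$ made there.
\item Your preliminary claim $|B_p|<1$ for $\Re(s)>3+2\epsilon$ has the same defect at small primes, since one only knows $|b_3(p)|\le (j+1)(p^2+1)$.
\end{itemize}

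\textbf{The fix} is to use the Euler product of $G_{j_1}^{(3)}$ rather than inverting $1+B_p$ as a power series. Put $\gamma_i=\alpha_f(p)^{j-i}\beta_f(p)^{i}$, so $|\gamma_i|=1$. Then the inverse local factor is a polynomial:
\[
(1+B_p)^{-1}=\prod_{i=0}^{j}\bigl(1-\gamma_i\chi_4(p)p^{-s}\bigr)\bigl(1-\gamma_i p^{2-s}\bigr)=\sum_{k=0}^{2j+2}e_k(p)\,p^{-ks},\qquad |e_k(p)|\le\binom{2j+2}{k}p^{2k}.
\]
Hence
\[
c_3(p^m)=\sum_{k=0}^{\min(m,2j+2)}e_k(p)\,\lambda_{\mathrm{sym}^jf}(p^{m-k})\,l_3(p^{m-k})
\]
has at most $2j+3$ terms. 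Using $|\lambda_{\mathrm{sym}^jf}(p^r)|\le\binom{r+j}{j}$ and $|l_3(p^r)|\le\frac{4}{3}p^{2r}$, you get
\[
|c_3(p^m)|\le\tfrac{4}{3}\,4^{j+1}\binom{m+j}{j}\,p^{2m}
\]
uniformly in $p$. Consequently every local factor converges absolutely for $\Re(s)>2$. Moreover $\sum_{m\ge2}|c_3(p^m)|p^{-m\sigma}\ll_j p^{4-2\sigma}$ uniformly for $\sigma\ge\frac{5}{2}$, and this is summable over $p$ once $\sigma>\frac{5}{2}$. Your final step then goes through unchanged, and the condition $|B_p|<1$ is no longer needed.
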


\begin{lemma} \label{lem:F3''}
Let \(f\) be a normalised primitive holomorphic cusp form of weight \(k\) for \(\operatorname{SL}(2,\mathbb{Z})\), 
and let \(\lambda_{\operatorname{sym}^{j}f}(n)\) denote the \(n\)th normalised Fourier coefficient 
of the \(j\)th symmetric power \(L\)-function attached to \(f\). Define
\[
F_{j_2}^{(3)}(s)=\sum_{n=1}^{\infty}\frac{\lambda_{\operatorname{sym}^{j}f}(n)\,v_3(n)}{n^{s}}, \qquad \Re(s)>3,
\]
where \(v_3(n)\) is given by \eqref{eq:l_3(n)&v_3(n)}. Then \(F_{j_2}^{(3)}(s)\) admits a factorisation
\[
F_{j_2}^{(3)}(s)=G_{j_2}^{(3)}(s)\,H_{j_2}^{(3)}(s),
\]
in which
\[
G_{j_2}^{(3)}(s):=L\!\bigl(s-2,\operatorname{sym}^{j}f \otimes \chi_4\bigr)\; 
        L\!\bigl(s,\operatorname{sym}^{j}f\bigr),
\]
and \(H_{j_2}^{(3)}(s)\) is a Dirichlet series converging absolutely and uniformly 
in the half-plane \(\Re(s)>\frac{5}{2}\).
\end{lemma}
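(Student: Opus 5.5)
The plan is to follow the Euler-product comparison used in the proof of Lemma~\ref{lem:F1}. First we pin down $v_3$ at prime powers. Since $v_3(n)=\sum_{d\mid n}d^2\chi_4(d)$ is the Dirichlet convolution of $\mathbf{1}$ with $n\mapsto n^2\chi_4(n)$, it is multiplicative, with
\[
v_3(p)=1+p^2\chi_4(p),\qquad v_3(p^m)\ll_\epsilon p^{2m+m\epsilon}.
\]
Consequently $\lambda_{\mathrm{sym}^jf}(n)v_3(n)\ll n^{2+\epsilon}$ by \eqref{eq:lambdaSymbound}. This shows that $F_{j_2}^{(3)}(s)$ converges absolutely for $\Re(s)>3$ and, by multiplicativity, has an Euler product there.

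Next, let $b(n)$ be the coefficients of
\[
G_{j_2}^{(3)}(s)=L(s-2,\mathrm{sym}^jf\otimes\chi_4)\,L(s,\mathrm{sym}^jf),
\]
that is, $b=(\lambda_{\mathrm{sym}^jf}\cdot n^2\chi_4)*\lambda_{\mathrm{sym}^jf}$. This $b$ is multiplicative, satisfies $b(n)\ll n^{2+\epsilon}$, and at primes
\[
b(p)=\lambda_{\mathrm{sym}^jf}(p)\bigl(p^2\chi_4(p)+1\bigr)=\lambda_{\mathrm{sym}^jf}(p)v_3(p).
\]
So the $p^{-s}$ terms of the local factors of $F_{j_2}^{(3)}$ and $G_{j_2}^{(3)}$ agree. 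For $\Re(s)>3+2\epsilon$ each local factor of $G_{j_2}^{(3)}$ is nonvanishing, since $|B|<1$ exactly as in Lemma~\ref{lem:F1}. We then define
\[
H_{j_2}^{(3)}(s)=\prod_p\frac{1+A}{1+B}=\sum_{n}\frac{c(n)}{n^s},
\]
where $A,B$ are the local sums of $\lambda_{\mathrm{sym}^jf}(p^m)v_3(p^m)p^{-ms}$ and $b(p^m)p^{-ms}$. Here $c$ is multiplicative with $c(p)=0$.

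The main point is to bound $c(p^m)$. Expanding $(1+A)(1-B+B^2-\cdots)$ as a formal power series in $p^{-s}$, each coefficient of $p^{-ms}$ is a finite combination of products of terms of total "degree" $m$. Each such term is bounded by $p^{2m+m\epsilon}$ up to a combinatorial factor, and that factor can be absorbed into $p^{m\epsilon}$ since the number of compositions of $m$ is $2^{m-1}$. This gives $c(p^m)\ll p^{(2+\epsilon)m}$. Because $c(p)=0$, the sum $\sum_p\sum_{m\ge2}|c(p^m)|p^{-m\sigma}$ is dominated by $\sum_p p^{-2(\sigma-2-\epsilon)}$, which converges when $\sigma>\tfrac52+\epsilon$. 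Hence the Euler product for $H_{j_2}^{(3)}$ converges absolutely and uniformly on $\Re(s)\ge\tfrac52+\epsilon'$ for every $\epsilon'>0$, and is $\ll1$ there.

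Finally, the identity $F_{j_2}^{(3)}=G_{j_2}^{(3)}H_{j_2}^{(3)}$ holds for $\Re(s)>3$ by construction. Since the right-hand side is defined on the larger region (where the $L$-factors continue), this gives the stated factorisation. I expect the main obstacle to be making the coefficient bound for $c(p^m)$ uniform in $m$. The cleanest way is to shift $s\mapsto s+2$, which normalises all coefficients to size $\ll n^{\epsilon}$, and then reuse verbatim the estimates of Lemma~\ref{lem:F1}, whose threshold $\tfrac12$ shifts to $\tfrac52$.
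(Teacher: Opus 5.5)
Your proposal is correct and follows essentially the argument the paper intends: the paper omits this proof and defers to the Euler-product comparison of Lemma~\ref{lem:F1}, which is exactly your matching of the $p^{-s}$ coefficients (so $c(p)=0$), bounding $c(p^m)$ by $p^{(2+\epsilon)m}$, and reading off absolute convergence of $H_{j_2}^{(3)}(s)$ for $\Re(s)>\frac{5}{2}$, with the shift $s\mapsto s+2$ reducing it to that lemma's setting. The one soft spot, which the paper shares, is that absorbing the combinatorial factor into $p^{m\epsilon}$ (and likewise the claim $|B|<1$) only works for $p$ large in terms of $\epsilon$; it is cleaner to note that $1/(1+B)=\prod_{i=0}^{j}(1-\alpha_p^{j-i}\beta_p^{i}\chi_4(p)p^{2-s})(1-\alpha_p^{j-i}\beta_p^{i}p^{-s})$ is a polynomial in $p^{-s}$, so $c(p^m)$ is a bounded combination of the $\lambda_{\mathrm{sym}^jf}(p^{m-k})v_3(p^{m-k})$ and hence $c(p^m)\ll_j m^{j}p^{2m}$ uniformly in $p$ and $m$.
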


\begin{lemma} \label{lem:F4}
Let \(f\) be a normalised primitive holomorphic cusp form of weight \(k\) for \(\operatorname{SL}(2,\mathbb{Z})\), 
and let \(\lambda_{\operatorname{sym}^{j}f}(n)\) denote the \(n\)th normalised Fourier coefficient 
of the \(j\)th symmetric power \(L\)-function attached to \(f\).  Define
\[
F_{j}^{(4)}(s)=\sum_{n=1}^{\infty}\frac{\lambda_{\operatorname{sym}^{j}f}(n)\,l_4(n)}{n^{s}}, \qquad \Re(s)>4,
\]
where \(l_4(n)\) is given by \eqref{eq:l_4(n)}. Then \(F_{j}^{(4)}(s)\) admits a factorisation
\[
F_{j}^{(4)}(s)=G_{j}^{(4)}(s)\,H_{j}^{(4)}(s),
\]
in which
\[
G_{j}^{(4)}(s):=L\!\bigl(s,\operatorname{sym}^{j}f\bigr)\; 
        L\!\bigl(s-3,\operatorname{sym}^{j}f\bigr),
\]
and \(H_{j}^{(4)}(s)\) is a Dirichlet series converging absolutely and uniformly 
in the half-plane \(\Re(s)>\frac{7}{2}\).
\end{lemma}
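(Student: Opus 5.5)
The plan is to follow the proof of Lemma~\ref{lem:F1} essentially verbatim, after shifting by $3$: $l_4(n)$ has size $n^{3+\epsilon}$ instead of $n^{\epsilon}$. The only new feature is the prime $2$, which I treat separately as a single Euler factor.

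\textbf{Step 1: multiplicativity and values of $l_4$.} Write $n=2^a u$ with $u$ odd. A divisor $d=2^b e$ of $n$ has sign $(-1)^{n+d}$ that depends only on $a$ and $b$, so
\[
l_4(n)=l_4(2^a)\,\sigma_3(u), \qquad \sigma_3(u)=\sum_{e\mid u}e^3 .
\]
Hence $l_4$ is multiplicative. Also $l_4(p^m)=\sigma_3(p^m)$ for odd $p$, and in particular $l_4(p)=1+p^3$. Since $|l_4(n)|\le \sigma_3(n)\ll n^{3+\epsilon}$, and $\lambda_{\mathrm{sym}^jf}(n)\ll n^\epsilon$ by \eqref{eq:lambdaSymbound}, the series $F_j^{(4)}(s)$ converges absolutely for $\Re(s)>4$. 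It therefore has an Euler product there, as in \eqref{eq:FjEuler1}.

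\textbf{Step 2: comparison at odd primes.} Define $b_4(n)$ by $\sum_n b_4(n)n^{-s}=L(s,\mathrm{sym}^jf)L(s-3,\mathrm{sym}^jf)$, so that $b_4=\lambda_{\mathrm{sym}^jf}*(\lambda_{\mathrm{sym}^jf}\cdot\mathrm{id}^3)$. Then $b_4(p)=\lambda_{\mathrm{sym}^jf}(p)(1+p^3)=\lambda_{\mathrm{sym}^jf}(p)l_4(p)$ for every odd prime $p$, and $|b_4(n)|\ll n^{3+\epsilon}$.

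For odd $p$, set $A=\sum_{m\ge1}\lambda_{\mathrm{sym}^jf}(p^m)l_4(p^m)p^{-ms}$ and $B=\sum_{m\ge1}b_4(p^m)p^{-ms}$, and expand $(1+A)/(1+B)$ exactly as in Lemma~\ref{lem:F1}. Because $A$ and $B$ agree at first order, this gives
\[
\frac{1+A}{1+B}=1+\sum_{m\ge2}\frac{c(p^m)}{p^{ms}}, \qquad c(p^m)\ll p^{3m+\epsilon m}.
\]
The $m=2$ term is $O(p^{6+2\epsilon-2\sigma})$, and the tail $m\ge3$ is $O(p^{9+3\epsilon-3\sigma})$. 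So $\sum_p u_p$ converges provided $2\sigma-6-2\epsilon>1$, that is, for $\sigma>\tfrac72+\epsilon$. The product over odd primes therefore converges absolutely and uniformly, and is $\ll 1$, in $\Re(s)>\tfrac72$.

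\textbf{Step 3: the prime $2$ (main obstacle).} At $p=2$ the first-order terms do not match: $l_4(2)=-1+8=7$, whereas $b_4(2)=9\lambda_{\mathrm{sym}^jf}(2)$. So the factor at $2$ must be handled directly rather than by the expansion of Step 2. Its denominator $(1+B)^{-1}$ equals
\[
\prod_{i=0}^{j}\bigl(1-\alpha^{i}\beta^{j-i}2^{-s}\bigr)\bigl(1-\alpha^{i}\beta^{j-i}2^{3-s}\bigr),
\]
which is a polynomial in $2^{-s}$ and hence entire. Its numerator $1+A$ converges absolutely for $\Re(s)>3$, since $l_4(2^m)\ll 2^{3m}$ and $|\lambda_{\mathrm{sym}^jf}(2^m)|\ll m^{j}$. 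So the local factor at $2$ is an absolutely convergent Dirichlet series supported on powers of $2$ in $\Re(s)>3\supset\{\Re(s)>\tfrac72\}$.

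Multiplying this factor at $2$ by the product over odd primes gives $H_j^{(4)}(s)=F_j^{(4)}(s)/G_j^{(4)}(s)=\sum_n c(n)n^{-s}$. This series converges absolutely and uniformly in $\Re(s)>\tfrac72$, which proves the lemma.
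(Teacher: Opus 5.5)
Your proposal is correct and follows the paper's route. The paper omits this proof and points to the template of Lemma~\ref{lem:F1}: match Euler factors to first order, bound the higher coefficients of the quotient by $p^{(3+\epsilon)m}$, and conclude absolute convergence for $\Re(s)>\tfrac72$. Your separate treatment of $p=2$ is a correct refinement that the paper's template silently overlooks, since there $l_4(2)=7$ while $G_j^{(4)}$ contributes $1+2^3=9$. It changes only one Euler factor, which converges absolutely for $\Re(s)>3$, so the lemma's conclusion stands.
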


\begin{lemma}\label{lem:F5'}
Let \(f\) be a normalised primitive holomorphic cusp form of weight \(k\) for \(\operatorname{SL}(2,\mathbb{Z})\), 
and let \(\lambda_{\operatorname{sym}^{j}f}(n)\) denote the \(n\)th normalised Fourier coefficient 
of the \(j\)th symmetric power \(L\)-function attached to \(f\). Define
\[
F_{j_1}^{(5)}(s)=\sum_{n=1}^{\infty}\frac{\lambda_{\operatorname{sym}^{j}f}(n)\,l_5(n)}{n^{s}} \qquad \Re(s)>5,
\]
where \(l_5(n)\) is given by \eqref{eq:l_5(n)&v_5(n)}. Then \(F_{j_1}^{(5)}(s)\) admits a factorisation
\[
F_{j_1}^{(5)}(s)=G_{j_1}^{(5)}(s)\,H_{j_1}^{(5)}(s),
\]
in which
\[
G_{j_1}^{(5)}(s):=L\!\bigl(s-4,\operatorname{sym}^{j}f\bigr)\; 
        L\!\bigl(s,\operatorname{sym}^{j}f\otimes\chi_4\bigr)
\]
and \(H_{j_1}^{(5)}(s)\) is a Dirichlet series converging absolutely and uniformly 
in the half-plane \(\Re(s)>\frac{9}{2}\).
\end{lemma}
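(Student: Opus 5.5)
The plan is to follow the template of Lemma~\ref{lem:F1}, using the arithmetic of $l_5$. We have $l_5(n)=\sum_{d\mid n}\chi_4(n/d)d^4$, which is the Dirichlet convolution of $n\mapsto n^4$ with $\chi_4$, so $l_5$ is multiplicative. (Throughout, the character $\chi$ in \eqref{eq:l_5(n)&v_5(n)} is $\chi_4$.) Hence $\lambda_{\mathrm{sym}^jf}(n)l_5(n)$ is multiplicative. Moreover $|\lambda_{\mathrm{sym}^jf}(n)l_5(n)|\ll n^{4+\epsilon}$, using \eqref{eq:lambdaSymbound} together with $l_5(n)\le n^4 d(n)$. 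So $F^{(5)}_{j_1}(s)$ converges absolutely for $\Re(s)>5$ and has an Euler product there, as in \eqref{eq:FjEuler1}.

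The first step is to match the prime coefficients. At a prime we have $l_5(p)=p^4+\chi_4(p)$. The series $G^{(5)}_{j_1}(s)=L(s-4,\mathrm{sym}^jf)L(s,\mathrm{sym}^jf\otimes\chi_4)=\sum b_5(n)n^{-s}$ has $b_5=(\lambda_{\mathrm{sym}^jf}\cdot n^4)*(\lambda_{\mathrm{sym}^jf}\chi_4)$. Thus $b_5(p)=\lambda_{\mathrm{sym}^jf}(p)p^4+\lambda_{\mathrm{sym}^jf}(p)\chi_4(p)=\lambda_{\mathrm{sym}^jf}(p)l_5(p)$. The same divisor-sum estimate as in Lemma~\ref{lem:F1} gives $|b_5(n)|\ll n^{4+\epsilon}$.

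The second step is to normalise and reduce exactly to the situation of Lemma~\ref{lem:F1}. Set $a(n)=\lambda_{\mathrm{sym}^jf}(n)l_5(n)n^{-4}$ and $b(n)=b_5(n)n^{-4}$. Both are multiplicative, $\ll n^\epsilon$, and agree at primes. Writing $w=s-4$, define $H^{(5)}_{j_1}(s)=\prod_p (1+A_p)/(1+B_p)$, where $A_p=\sum_{m\ge1}a(p^m)p^{-mw}$ and $B_p=\sum_{m\ge1}b(p^m)p^{-mw}$. For $\Re w>1+2\epsilon$ we have $|B_p|<1$. Expanding as before, the local factor is $1+\sum_{m\ge2}c(p^m)p^{-mw}$ with $c(p^m)\ll p^{m\epsilon}$, because the $p^{-w}$ terms cancel. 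This yields the factorisation $F^{(5)}_{j_1}=G^{(5)}_{j_1}H^{(5)}_{j_1}$ for $\Re(s)>5$.

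The last step, which is the only place needing care, is to extend $\sum_n c(n)n^{-w}$ to $\Re w>1/2$. The expansion was only derived for $\Re w>1$, so one must check that the coefficients $c(p^m)$ are defined formally, independent of $s$, with $c(p)=0$ and $c(p^m)\ll_\epsilon p^{m\epsilon}$ for all $m\ge2$. A polynomial-growth bound such as $c(p^m)\ll (m+1)^{C}p^{m\epsilon}$ suffices, and it follows from the bounded Satake parameters. Then $\sum_p\sum_{m\ge2}|c(p^m)|p^{-m\sigma'}\ll\sum_p p^{-2\sigma'+2\epsilon}$ converges for $\sigma'=\Re w>\tfrac12+\epsilon$, uniformly on compacta. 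So $H^{(5)}_{j_1}(s)$ converges absolutely and uniformly for $\Re(s)>\tfrac92$.
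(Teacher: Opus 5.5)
Your proof is correct and follows the paper's intended argument. The paper omits this proof and refers to the template of Lemma~\ref{lem:F1}: match $\lambda_{\mathrm{sym}^jf}(p)l_5(p)=\lambda_{\mathrm{sym}^jf}(p)\bigl(p^4+\chi_4(p)\bigr)$ with the prime coefficient of $L(s-4,\mathrm{sym}^jf)L(s,\mathrm{sym}^jf\otimes\chi_4)$, so that the quotient of Euler factors is $1+O(p^{-2(\sigma-4)+\epsilon})$, which is exactly your shift $w=s-4$.

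Your insistence that the $c(p^m)$ be defined formally, with bounds uniform in $p$, repairs a point the paper glosses over and makes the appeal to $|B_p|<1$ (delicate at small primes) unnecessary. The required bound is immediate: $1/(1+B_p)=\prod_{i=0}^{j}\bigl(1-\alpha_f(p)^{j-2i}p^{-w}\bigr)\bigl(1-\alpha_f(p)^{j-2i}\chi_4(p)p^{-4-w}\bigr)$ is a polynomial of degree $2j+2$ in $p^{-w}$ with coefficients bounded in terms of $j$, which gives $c(p^m)\ll_j (m+1)^{j}$.
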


\begin{lemma}\label{lem:F5''}
        Let $f$ be a normalised primitive holomorphic cusp form of weight $k$ for $SL(2,\mathbb{Z})$ and let $\lambda_{\mathrm{sym}^{j}f}(n)$ be the $n$th normalised Fourier coefficient of the $j$th symmetric power $L$-function associated to $f$. Define
        \[
        F_{j_2}^{(5)}(s)=\sum_{n=1}^{\infty}\frac{\lambda_{\mathrm{sym}^{j}f}(n)v_5(n)}{n^{s}}, \quad \Re(s) > 5,
        \]
        where $v_5(n)$ is given by \eqref{eq:l_5(n)&v_5(n)}. Then
        \[
        F_{j_2}^{(5)}(s)=G_{j_2}^{(5)}(s)H_{j_2}^{(5)}(s),
        \]
        where
        \[
        G_{j_2}^{(5)}(s):=L(s, \mathrm{sym}_{j}f)L(s-4,\mathrm{sym}_jf \otimes\chi_4)
        \]
        and $H_{j_2}^{(5)}(s)$ is a Dirichlet series that converges uniformly and absolutely in the half plane $\Re(s)>\frac{9}{2}$.
    \end{lemma}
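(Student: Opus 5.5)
The plan follows the template of Lemma \ref{lem:F1}: match the Euler factors of $F_{j_2}^{(5)}(s)$ and $G_{j_2}^{(5)}(s)$ at primes, and show that the quotient is an Euler product whose local factors differ from $1$ only from $p^{-2s}$ onward. Throughout, $\chi$ in \eqref{eq:l_5(n)&v_5(n)} is read as $\chi_4$, and $\mathrm{sym}_j f$ as $\mathrm{sym}^j f$.

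\textbf{Multiplicativity and crude bounds.} First I check that $v_5(n)=\sum_{d\mid n}\chi_4(d)d^4$ is multiplicative, being the Dirichlet convolution of $d\mapsto\chi_4(d)d^4$ with $1$. It satisfies $v_5(n)\ll n^{4+\epsilon}$. Combined with \eqref{eq:lambdaSymbound}, the coefficients $\lambda_{\mathrm{sym}^jf}(n)v_5(n)\ll n^{4+\epsilon}$, so $F_{j_2}^{(5)}(s)$ converges absolutely for $\Re(s)>5$ and has an Euler product there.

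Define $b_5(n)$ by
\[
\sum_{n\ge1}\frac{b_5(n)}{n^s}:=L(s,\mathrm{sym}^jf)\,L(s-4,\mathrm{sym}^jf\otimes\chi_4),
\]
so that $b_5=\lambda_{\mathrm{sym}^jf}*(\lambda_{\mathrm{sym}^jf}\chi_4\,\mathrm{id}^4)$. Exactly as for $b_1$, this gives $|b_5(n)|\le\sum_{d\mid n}d^{\epsilon}(n/d)^{4+\epsilon}\ll n^{4+\epsilon}$.

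\textbf{Matching at primes.} At a prime $p$,
\[
b_5(p)=\lambda_{\mathrm{sym}^jf}(p)\bigl(1+\chi_4(p)p^4\bigr)=\lambda_{\mathrm{sym}^jf}(p)\,v_5(p),
\]
since $v_5(p)=1+\chi_4(p)p^4$. Hence the $p^{-s}$ terms of the two local factors agree.

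\textbf{The quotient.} Set
\[
A=\sum_{m\ge1}\frac{\lambda_{\mathrm{sym}^jf}(p^m)v_5(p^m)}{p^{ms}},\qquad B=\sum_{m\ge1}\frac{b_5(p^m)}{p^{ms}}.
\]
For $\Re(s)>5+2\epsilon$ one has $|B|<1$, and I expand
\[
\frac{1+A}{1+B}=1+\sum_{m\ge2}\frac{c(p^m)}{p^{ms}}.
\]
The coefficients satisfy $c(p^m)\ll p^{m(4+\epsilon)}$: each $c(p^m)$ is a finite polynomial combination of terms of size $p^{m_i(4+\epsilon)}$ with $\sum m_i=m$, and the number of such terms grows at most exponentially in $m$, which the $\epsilon$-room absorbs. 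Define $H_{j_2}^{(5)}(s)=\prod_p\frac{1+A}{1+B}=\sum_n c(n)n^{-s}$, which is multiplicative by construction.

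\textbf{Region of convergence.} With $\sigma=\Re(s)$,
\[
\sum_p\Bigl(\frac{|c(p^2)|}{p^{2\sigma}}+\sum_{m\ge3}\frac{|c(p^m)|}{p^{m\sigma}}\Bigr)\ll\sum_p p^{-2(\sigma-4-\epsilon)},
\]
because the $m\ge3$ tail is dominated by its first term when $\sigma-4-\epsilon>1/2$. This series converges for $\sigma>9/2+\epsilon$, so the product converges absolutely and uniformly on $\Re(s)\ge 9/2+2\epsilon$. The identity $F_{j_2}^{(5)}=G_{j_2}^{(5)}H_{j_2}^{(5)}$ holds in $\Re(s)>5$ and gives the meromorphic continuation statement.

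\textbf{Main obstacle.} The key step is justifying uniform control of $c(p^m)$ for all $m$ with the weight $p^{4m}$, so that the tail estimate is valid right down to $\Re(s)>9/2$ and not only in $\Re(s)>5$. I would handle it the way Lemma \ref{lem:F1} does, by normalising. Replace $s$ by $s+4$ and work with $\lambda_{\mathrm{sym}^jf}(n)v_5(n)n^{-4}\ll n^{\epsilon}$. This reduces the problem exactly to the setting of Lemma \ref{lem:F1}, with the region $\Re(s)>1/2$ shifted to $\Re(s)>9/2$.
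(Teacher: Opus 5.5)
Your proposal is correct and is essentially the paper's argument: the paper omits this proof as following the template of Lemma~\ref{lem:F1}, and your prime matching $b_5(p)=\lambda_{\mathrm{sym}^jf}(p)\bigl(1+\chi_4(p)p^4\bigr)=\lambda_{\mathrm{sym}^jf}(p)v_5(p)$, combined with the normalisation $s\mapsto s+4$, reduces the statement to that template and shifts $\Re(s)>\frac12$ to $\Re(s)>\frac92$. One soft spot is your ``exponentially many terms, absorbed by the $\epsilon$-room'' justification of $c(p^m)\ll p^{m(4+\epsilon)}$: it is not uniform over small primes, since a factor $K^m$ is not $\ll p^{m\epsilon}$ when $p^{\epsilon}<K$; it is cleaner to note that $1/(1+B)$ is the inverse local factor of $G_{j_2}^{(5)}$, which after normalisation is a polynomial in $p^{-s}$ of degree $2(j+1)$ with coefficients bounded uniformly in $p$, so the normalised $c(p^m)$ grow only polynomially in $m$.
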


\begin{lemma}\label{lem:F6}
Let \(f\) be a normalised primitive holomorphic cusp form of weight \(k\) for \(\operatorname{SL}(2,\mathbb{Z})\) and let \(\lambda_{\operatorname{sym}^{j}f}(n)\) denote the \(n\)th normalised Fourier coefficient 
of the \(j\)th symmetric power \(L\)-function attached to \(f\). Define
\[
F_{j}^{(6)}(s)=\sum_{n=1}^{\infty}\frac{\lambda_{\operatorname{sym}^{j}f}(n)\,l_6(n)}{n^{s}}, \qquad \Re(s)>6,
\]
where \(l_6(n)\) is a given by \eqref{eq:l_6(n)}. Then \(F_{j}^{(6)}(s)\) admits a factorisation
\[
F_{j}^{(6)}(s)=G_{j}^{(6)}(s)\,H_{j}^{(6)}(s),
\]
in which 
\[
G_{j}^{(6)}(s):=L\!\bigl(s-5,\operatorname{sym}^{j}f\bigr)\; 
        L\!\bigl(s,\operatorname{sym}^{j}f\bigr)
\]
and \(H_{j}^{(6)}(s)\) is a Dirichlet series converging absolutely and uniformly 
in the half-plane \(\Re(s)>\frac{11}{2}\).
\end{lemma}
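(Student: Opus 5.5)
We prove Lemma~\ref{lem:F6} in the same way as Lemma~\ref{lem:F1}: compare Euler factors prime by prime, and show that the two sides agree at every prime to first order.

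\textbf{Step 1: first-order agreement at primes.} Write $\lambda=\lambda_{\mathrm{sym}^jf}$. The bound $|l_6(n)|\le \sigma_5(n)\ll n^{5+\epsilon}$ together with \eqref{eq:lambdaSymbound} gives absolute convergence of $F_j^{(6)}(s)$ for $\Re(s)>6$. Since $\lambda$ is multiplicative, $F_j^{(6)}$ has an Euler product there.

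We do \emph{not} assume $l_6$ is multiplicative. The sign factor $(-1)^{n+d+n/d-1}$ depends on the $2$-adic valuations of $n$, $d$ and $n/d$. We only need the prime values:
\begin{itemize}
\item For an odd prime $p$, $n=p$ and both divisors $d\in\{1,p\}$ are odd. The exponent is $p+1+p-1=2p$, which is even, and likewise for $d=p$. Hence $l_6(p)=1+p^5$.
\item For $p=2$ the sign pattern differs. This affects a single Euler factor, which is entire and nonvanishing for $\Re(s)>5$. We may therefore absorb it into the correction factor without harm.
\end{itemize}

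Set
\[
\sum_{n\ge1} \frac{b_6(n)}{n^s}:=L(s-5,\mathrm{sym}^jf)\,L(s,\mathrm{sym}^jf).
\]
This gives $b_6(p)=\lambda(p)p^5+\lambda(p)=\lambda(p)\,l_6(p)$ for odd $p$.

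\textbf{Step 2: the correction factor.} For each odd $p$ put
\[
A=\sum_{m\ge1}\frac{\lambda(p^m)l_6(p^m)}{p^{ms}},\qquad B=\sum_{m\ge1}\frac{b_6(p^m)}{p^{ms}}.
\]
Both numerators are $\ll p^{m(5+\epsilon)}$, so $|B|<1$ for $\Re(s)>6+2\epsilon$. Then
\[
\frac{1+A}{1+B}=1+\sum_{m\ge2}\frac{c(p^m)}{p^{ms}}, \qquad c(p^m)\ll p^{m(5+\epsilon)}.
\]
The $m=1$ term cancels by Step 1.

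Define $H_j^{(6)}:=F_j^{(6)}/G_j^{(6)}$. This is the product over odd $p$ of these factors, times the ratio of the $2$-factors. Its local sums satisfy
\[
u_p\ll \sum_{m\ge2}p^{m(5+\epsilon-\sigma)}\ll p^{-2(\sigma-5-\epsilon)},
\]
so $\sum_p u_p$ converges when $2(\sigma-5)>1$, that is, for $\sigma>\tfrac{11}{2}+\epsilon$. This yields absolute and uniform convergence of $H_j^{(6)}$ in $\Re(s)>\tfrac{11}{2}$.

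\textbf{Main obstacle.} The delicate point is the bound $c(p^m)\ll p^{m(5+\epsilon)}$ and the extension of $1+A$ and $1+B$ beyond $\Re(s)>6$. The expansion $(1+A)(1-B+\cdots)$ is first valid only for $\Re(s)>6$. To extend the identity, argue with the formal power series in $X=p^{-s}$:
\begin{itemize}
\item its coefficients are polynomially bounded, namely $\ll p^{m(5+\epsilon)}$, by comparison with $d_{2j+2}$-type bounds;
\item so $c(p^m)/p^{ms}$ is summable uniformly in $p$ for $\sigma>\tfrac{11}{2}$;
\item analytic continuation of the identity then follows from uniqueness of Dirichlet series.
\end{itemize}

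At $p=2$, the alternative is to compute directly. For $2\,\|\,n$ the sign pattern flips relative to the odd case, and $l_6(2^m)$ is still $\ll 2^{5m}$. Hence the $2$-factor of $F_j^{(6)}$ is holomorphic for $\Re(s)>5$. The $2$-factor of $G_j^{(6)}$ is nonvanishing for $\Re(s)>5$, since its local factors $(1-\gamma 2^{5-s})^{-1}$ have no zeros. So the ratio is harmless.
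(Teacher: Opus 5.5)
Your route is the paper's own. The paper omits this proof and points to the scheme of Lemma~\ref{lem:F1}: match the prime terms using $l_6(p)=1+p^5$, then show the local correction factors are $1+O(p^{-2(\sigma-5-\epsilon)})$, which gives absolute convergence for $\Re(s)>\tfrac{11}{2}$. Your bounds on $c(p^m)$ are fine.

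\textbf{The gap: multiplicativity of $l_6$.} An Euler product for $F_j^{(6)}(s)$ needs $n\mapsto\lambda_{\mathrm{sym}^{j}f}(n)\,l_6(n)$ to be multiplicative, and multiplicativity of $\lambda_{\mathrm{sym}^{j}f}$ alone does not give this. You explicitly decline to assume $l_6$ is multiplicative. Yet everything afterwards presupposes it: the local series $1+A$, and ``the $2$-factor of $F_j^{(6)}$''. So the argument as written has no foundation.

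The fix is a short parity check. Write $n=2^a m$ and $d=2^b e$, with $m$ odd, $e\mid m$ and $0\le b\le a$.
\begin{itemize}
\item If $a=0$, then $n$, $d$ and $n/d$ are all odd, so $n+d+n/d-1$ is even.
\item If $a\ge 1$, the exponent has the parity of $d+n/d-1$, which is odd exactly when $0<b<a$.
\end{itemize}
So the sign $(-1)^{n+d+n/d-1}$ depends only on $(a,b)$. Hence $l_6(n)=l_6(2^a)\,\sigma_5(m)$ with $\sigma_5(m)=\sum_{e\mid m}e^5$, and $l_6$ is multiplicative.

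\textbf{The prime $2$.} The same computation corrects your remark about $2\,\|\,n$: there is no sign change there. In fact $l_6(2)=1+2^5$, so $b_6(p)=\lambda_{\mathrm{sym}^{j}f}(p)\,l_6(p)$ holds at $p=2$ as well. The first departure from $\sigma_5$ is $l_6(4)=1-2^5+2^{10}$. You only use $|l_6(2^m)|\le\sigma_5(2^m)$, so your treatment of the $2$-factor survives.

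\textbf{Your ``main obstacle''.} This is not really an obstacle. The reciprocal
$(1+B)^{-1}=\prod_{i=0}^{j}\bigl(1-\alpha_f(p)^{i}\beta_f(p)^{j-i}p^{5-s}\bigr)\bigl(1-\alpha_f(p)^{i}\beta_f(p)^{j-i}p^{-s}\bigr)$
is a polynomial in $p^{-s}$. Hence $(1+A)/(1+B)$ is a power series in $p^{-s}$ converging for $\Re(s)>5$, with no need for $|B|<1$. That condition was not actually secured for small $p$ anyway, since your bound carried an unspecified implied constant.
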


\begin{lemma} \label{lem:F0*}
Let \(f\) be a normalised primitive holomorphic cusp form of weight \(k\) for \(\operatorname{SL}(2,\mathbb{Z})\)
and let \(\lambda_{\operatorname{sym}^{j}f}(n)\) denote the \(n\)th normalised Fourier coefficient 
of the \(j\)th symmetric power \(L\)-function attached to \(f\). Define
\[
F_{j}^{*(0)}(s)=\sum_{n=1}^{\infty}\frac{\lambda_{\operatorname{sym}^{j}f}^2(n)\,}{n^{s}}, \qquad \Re(s)>1.
\]  
Then \(F_{j}^{*(1)}(s)\) admits a factorisation
\[
F_{j}^{*(0)}(s)=G_{j}^{*(0)}(s)\,H_{j}^{*(0)}(s),
\]
in which
\[
G_{j}^{*(0)}(s):=\zeta(s)\prod_{n=1}^{j}L\!\bigl(s,\operatorname{sym}^{2n}f \bigr),
\] 
and \(H_{j}^{*(0)}(s)\) is a Dirichlet series converging absolutely and uniformly 
in the half-plane \(\Re(s)>\frac{1}{2}\).
\end{lemma}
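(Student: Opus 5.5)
We follow the template of the proof of Lemma~\ref{lem:F1}: compare the Euler factors of $F_j^{*(0)}(s)$ and $G_j^{*(0)}(s)$ prime by prime, show they agree at the first power of $p$, and conclude that the quotient has absolutely convergent Euler product for $\Re(s)>\frac12$.

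\textbf{Absolute convergence and Euler products.} By \eqref{eq:lambdaSymbound} we have $\lambda_{\mathrm{sym}^jf}^2(n)\ll n^{\epsilon}$, and $\lambda^2_{\mathrm{sym}^jf}$ is multiplicative. Hence $F_j^{*(0)}(s)$ converges absolutely for $\Re(s)>1$ and equals $\prod_p\bigl(1+\sum_{m\ge1}\lambda_{\mathrm{sym}^jf}^2(p^m)p^{-ms}\bigr)$. Define $b^*(n)$ by
\[
\sum_{n\ge1} b^*(n)n^{-s}=G_j^{*(0)}(s)=\zeta(s)\prod_{n=1}^{j}L(s,\mathrm{sym}^{2n}f),
\]
so that $b^*=\mathbf 1*\lambda_{\mathrm{sym}^2f}*\cdots*\lambda_{\mathrm{sym}^{2j}f}$. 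This is a Dirichlet convolution of $j+1$ multiplicative functions, each $\ll n^\epsilon$, so $|b^*(n)|\le n^{\epsilon}d_{j+1}(n)\ll n^{\epsilon}$. In particular $G_j^{*(0)}$ also has an absolutely convergent Euler product for $\Re(s)>1$.

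\textbf{Matching at primes.} Expanding the Euler factor of the convolution at $p$ gives
\[
b^*(p)=1+\sum_{\ell=1}^{j}\lambda_{\mathrm{sym}^{2\ell}f}(p),
\]
and by Lemma~\ref{lambdasquare} this equals $\lambda_{\mathrm{sym}^jf}^2(p)$. This is the key identity, and it is already available.

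\textbf{Quotient series.} With $A=\sum_{m\ge1}\lambda^2_{\mathrm{sym}^jf}(p^m)p^{-ms}$ and $B=\sum_{m\ge1}b^*(p^m)p^{-ms}$, we have $|B|<1$ for $\Re(s)>1+2\epsilon$. Exactly as in Lemma~\ref{lem:F1}, the ratio $(1+A)/(1+B)$ expands as
\[
1+\sum_{m\ge2}c^*(p^m)p^{-ms}, \qquad c^*(p^m)\ll p^{m\epsilon},
\]
and the $p^{-s}$ term cancels by the prime identity above. Define $H_j^{*(0)}(s):=\prod_p (1+A)/(1+B)=\sum_n c^*(n)n^{-s}$. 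Then
\[
\sum_p\Bigl(|c^*(p^2)|p^{-2\sigma}+\sum_{m\ge3}|c^*(p^m)|p^{-m\sigma}\Bigr)\ll\sum_p p^{-2\sigma+2\epsilon},
\]
which converges for $\sigma>\frac12+\epsilon$. So the product, and hence the Dirichlet series $H_j^{*(0)}$, converges absolutely and uniformly in $\Re(s)\ge\frac12+2\epsilon$ for every $\epsilon>0$, i.e.\ in $\Re(s)>\frac12$.

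\textbf{Extending the identity.} The identity $F_j^{*(0)}=G_j^{*(0)}H_j^{*(0)}$ first holds for $\Re(s)>1$. It extends to the stated region by analytic continuation, since $H_j^{*(0)}$ is holomorphic there.

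\textbf{Main obstacle.} The only step needing care is the uniform bound $c^*(p^m)\ll p^{m\epsilon}$ with an implied constant independent of $p$. When expanding the quotient, the coefficients come from finitely many products of terms bounded by $d_{j+1}$-type quantities at prime powers. These are polynomial in $m$ and independent of $p$, because every Satake parameter has modulus $1$, so the bound holds uniformly.
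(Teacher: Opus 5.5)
Your proposal is correct and is essentially the argument the paper intends: the paper omits this proof and defers to the Euler-factor comparison carried out for Lemma~\ref{lem:F1}, with Lemma~\ref{lambdasquare} supplying $b^*(p)=\lambda^2_{\mathrm{sym}^jf}(p)$ exactly as you use it. One minor remark: the inequality $|B|<1$ copied from Lemma~\ref{lem:F1} need not hold at small primes, because the bound $b^*(p^m)\ll p^{m\epsilon}$ has an implied constant; you never need it, though, since $1/(1+B)=\prod_i(1-\gamma_i p^{-s})$, with $\gamma_i$ running over the $(j+1)^2$ unimodular numbers $1$ and $\alpha_f(p)^{a}\beta_f(p)^{2\ell-a}$ ($1\le \ell\le j$, $0\le a\le 2\ell$), is a polynomial in $p^{-s}$ with coefficients bounded in terms of $j$, and multiplying it by $1+A$ gives your uniform bound directly in the form $c^*(p^m)\ll_j (m+1)^{2j}$.
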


\begin{lemma} \label{lem:F1*}
Let \(f\) be a normalised primitive holomorphic cusp form of weight \(k\) for \(\operatorname{SL}(2,\mathbb{Z})\)
and let \(\lambda_{\operatorname{sym}^{j}f}(n)\) denote the \(n\)th normalised Fourier coefficient 
of the \(j\)th symmetric power \(L\)-function attached to \(f\). Define
\[
F_{j}^{*(1)}(s)=\sum_{n=1}^{\infty}\frac{\lambda_{\operatorname{sym}^{j}f}^2(n)\,l_1(n)}{n^{s}}, \qquad \Re(s)>1,
\]
where \(l_1(n)\) is given by \eqref{eq:l_1(n)}. Then \(F_{j}^{*(1)}(s)\) admits a factorisation
\[
F_{j}^{*(1)}(s)=G_{j}^{*(1)}(s)\,H_{j}^{*(1)}(s),
\]
in which
\[
G_{j}^{*(1)}(s):=\zeta(s)L(s,\chi_4)\prod_{n=1}^{j}L\!\bigl(s,\operatorname{sym}^{2n}f \bigr)\; 
        L\!\bigl(s,\operatorname{sym}^{2n}f\otimes\chi_4\bigr),
\]
and \(H_{j}^{*(1)}(s)\) is a Dirichlet series converging absolutely and uniformly 
in the half-plane \(\Re(s)>\frac{1}{2}\).
\end{lemma}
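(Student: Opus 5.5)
We follow the template of Lemma~\ref{lem:F1}. First, $\lambda_{\mathrm{sym}^jf}^2(n)l_1(n)\ll n^{\epsilon}$ by \eqref{eq:lambdaSymbound} and $|l_1(n)|\le d(n)$. Hence $F_j^{*(1)}(s)$ converges absolutely for $\Re(s)>1$. Since $n\mapsto\lambda_{\mathrm{sym}^jf}^2(n)l_1(n)$ is multiplicative, it also has an Euler product there. Let $b_1^*(n)$ be the multiplicative coefficients of $G_j^{*(1)}(s)$, a product of $2(j+1)$ absolutely convergent Euler products in $\Re(s)>1$. By the same divisor-convolution argument as for $b_1$, we get $b_1^*(n)\ll n^{\epsilon}$.

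The key step is matching the coefficients at primes. Since $l_1(p)=1+\chi_4(p)$, Lemma~\ref{lambdasquare} gives
\[
\lambda_{\mathrm{sym}^jf}^2(p)\,l_1(p)=\Bigl(1+\sum_{n=1}^{j}\lambda_{\mathrm{sym}^{2n}f}(p)\Bigr)\bigl(1+\chi_4(p)\bigr).
\]
The coefficient of $p^{-s}$ in $G_j^{*(1)}(s)$ is the sum of the $p$-coefficients of its factors:
\[
1+\chi_4(p)+\sum_{n=1}^{j}\bigl(\lambda_{\mathrm{sym}^{2n}f}(p)+\lambda_{\mathrm{sym}^{2n}f}(p)\chi_4(p)\bigr).
\]
This expands to exactly the same expression, so $b_1^*(p)=\lambda_{\mathrm{sym}^jf}^2(p)\,l_1(p)$ for every prime $p$, including $p=2$ where $\chi_4(2)=0$.

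Next, set $H_j^{*(1)}(s):=F_j^{*(1)}(s)/G_j^{*(1)}(s)=\prod_p(1+A_p)/(1+B_p)$, where
\[
A_p=\sum_{m\ge1}\frac{\lambda_{\mathrm{sym}^jf}^2(p^m)l_1(p^m)}{p^{ms}},\qquad B_p=\sum_{m\ge1}\frac{b_1^*(p^m)}{p^{ms}}.
\]
For $\Re(s)>1+2\epsilon$ we have $|B_p|<1$, so each local factor can be expanded as a geometric series. The $p^{-s}$ terms cancel by the prime matching, so each factor has the form $1+\sum_{m\ge2}c^*(p^m)p^{-ms}$ with $c^*(p^m)\ll p^{m\epsilon}$. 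This defines a multiplicative $c^*(n)$ with $H_j^{*(1)}(s)=\sum_n c^*(n)n^{-s}$.

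Absolute convergence then follows as in Lemma~\ref{lem:F1}. The local tail satisfies
\[
\sum_{m\ge2}|c^*(p^m)|p^{-m\sigma}\ll p^{-2\sigma+2\epsilon},
\]
and $\sum_p p^{-2\sigma+2\epsilon}$ converges for $\sigma>\tfrac12+\epsilon$. So the product, and hence $H_j^{*(1)}(s)$, converges absolutely and uniformly in $\Re(s)>\tfrac12$.

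The main obstacle is the bound $c^*(p^m)\ll p^{m\epsilon}$ uniformly in $p$ and $m$. We get it from the polynomial-growth bounds on $\lambda_{\mathrm{sym}^jf}^2(p^m)$ and $b_1^*(p^m)$. Both are bounded by divisor-type functions of $p^m$ with exponents depending only on $j$, namely $d_{j+1}(p^m)^2d(p^m)$ and $d_{2(j+1)^2}(p^m)$, so $c^*(p^m)\ll_{j}(m+1)^{C_j}$. This suffices to control the $m\ge2$ tail in the half-plane $\Re(s)>\tfrac12$.
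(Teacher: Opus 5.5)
Your route is the one the paper intends. The paper omits the proof of Lemma~\ref{lem:F1*} and refers to the template of Lemma~\ref{lem:F1}, and your prime-coefficient matching via Lemma~\ref{lambdasquare} (including $p=2$) is exactly the input needed. The gap is in the step you single out yourself. Polynomial bounds on $a_m:=\lambda_{\mathrm{sym}^jf}^2(p^m)l_1(p^m)$ and on $b_1^*(p^m)$ do not by themselves bound the coefficients of $(1+A_p)/(1+B_p)$. For example, $1+\sum_{m\ge1}b_mX^m=(1-2X)/(1-X)$ has $|b_m|\le 1$, but its reciprocal has coefficients $2^{m-1}$. Growth of that kind would destroy convergence of the local factors at small primes near $\sigma=\tfrac12$. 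The proof of Lemma~\ref{lem:F1} just asserts $c_p(n)\ll n^{\epsilon}$, so the template does not supply this step either.

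The fix is to use the Euler product of $G_j^{*(1)}$. Write $1+B_p=\prod_{i=1}^{N}(1-\gamma_i(p)p^{-s})^{-1}$ with $N=2(j+1)^2$. The local roots are $\gamma_i(p)\in\{1,\ \chi_4(p),\ \alpha_p^{2k},\ \chi_4(p)\alpha_p^{2k}\}$ with $|k|\le n\le j$, all of modulus at most $1$. Hence $(1+B_p)^{-1}=\sum_{k=0}^{N}e_k(p)p^{-ks}$ is a polynomial in $p^{-s}$ with $|e_k(p)|\le\binom{N}{k}$. Using $|a_r|\le d_{j+1}(p^r)^2d(p^r)\le (r+1)^{2j+1}$, this gives
\[
c^*(p^m)=\sum_{k=0}^{\min(m,N)}e_k(p)\,a_{m-k},\qquad |c^*(p^m)|\le 2^{N}\max_{0\le r\le m}|a_r|\ll_j (m+1)^{2j+1},
\]
uniformly in $p$. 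At the first coefficient, $c^*(p)=a_1-b_1^*(p)=0$ by your prime matching. This also removes the need for the expansion $1-B_p+B_p^2-\cdots$. That expansion requires $|B_p|<1$, and the bound $b_1^*(p^m)\ll p^{m\epsilon}$ does not give this at small primes for $\sigma$ near $1$, since the implied constant is not $1$. With this repair, your tail estimate $\sum_{m\ge2}|c^*(p^m)|p^{-m\sigma}\ll_j p^{-2\sigma}$ and the absolute convergence for $\Re(s)>\tfrac12$ go through as you describe.
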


\begin{lemma}[{\cite{Hua2022}}]\label{lem:F2*}
Let \(f\) be a normalised primitive holomorphic cusp form of weight \(k\) for \(\operatorname{SL}(2,\mathbb{Z})\)
and let \(\lambda_{\operatorname{sym}^{j}f}(n)\) denote the \(n\)th normalised Fourier coefficient 
of the \(j\)th symmetric power \(L\)-function attached to \(f\). Define
\[
F_{j}^{*(2)}(s)=\sum_{n=1}^{\infty}\frac{\lambda_{\operatorname{sym}^{j}f}^2(n)\,l_2(n)}{n^{s}}, \qquad \Re(s)>2,
\]
where \(l_2(n)\) is given by \eqref{eq:l_2(n)}. Then \(F_{j}^{*(2)}(s)\) admits a factorisation
\[
F_{j}^{(2)}(s)=G_{j}^{*(2)}(s)\,H_{j}^{*(2)}(s),
\]
in which
\[
G_{j}^{*(2)}(s):=\zeta(s)\zeta(s-1)\prod_{n=1}^{j}L\!\bigl(s,\operatorname{sym}^{2n}f \bigr)\; 
        L\!\bigl(s-1,\operatorname{sym}^{2n}f\bigr)
\]
and \(H_{j}^{*(2)}(s)\) is a Dirichlet series converging absolutely and uniformly 
in the half-plane \(\Re(s)>\frac{3}{2}\).
\end{lemma}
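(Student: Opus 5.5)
Writing $\lambda:=\lambda_{\mathrm{sym}^jf}(n)$ and $\lambda_p:=\lambda_{\mathrm{sym}^jf}(p)$, the plan is to compare the Euler factors of $F_j^{*(2)}(s)$ and $G_j^{*(2)}(s)$ prime by prime, in the same way as in the proof of Lemma~\ref{lem:F1}. Since $\lambda^2(n)\,l_2(n)\ll n^{1+\epsilon}$, the series $F_j^{*(2)}(s)$ converges absolutely for $\Re(s)>2$. Because $\lambda^2$ and $l_2$ are both multiplicative, it has an Euler product there.

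The first step is to match the coefficients at primes. We have $l_2(p)=1+p$, and Lemma~\ref{lambdasquare} gives
\[
\lambda_p^{2}=1+\sum_{\ell=1}^{j}\lambda_{\mathrm{sym}^{2\ell}f}(p).
\]
Hence the $p$-th coefficient of $F_j^{*(2)}$ is
\[
(1+p)\Bigl(1+\sum_{\ell=1}^{j}\lambda_{\mathrm{sym}^{2\ell}f}(p)\Bigr).
\]
Now define the multiplicative function $b(n)$ by $\sum_n b(n)n^{-s}=G_j^{*(2)}(s)$. Its $p$-th coefficient is the sum of the first coefficients of the factors. From $\zeta(s)\prod_{\ell}L(s,\mathrm{sym}^{2\ell}f)$ this contributes $1+\sum_\ell\lambda_{\mathrm{sym}^{2\ell}f}(p)$. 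From $\zeta(s-1)\prod_\ell L(s-1,\mathrm{sym}^{2\ell}f)$ it contributes $p\bigl(1+\sum_\ell\lambda_{\mathrm{sym}^{2\ell}f}(p)\bigr)$. So $b(p)=\lambda_p^2\,l_2(p)$ exactly.

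Next I bound the higher prime powers. Deligne's bound gives $\lambda_{\mathrm{sym}^{2\ell}f}(n)\ll n^\epsilon$, and $G_j^{*(2)}$ is a product of $2(j+1)$ degree-controlled factors, half of them shifted by $1$. Together these give $b(p^m)\ll p^{m(1+\epsilon)}$, and likewise $\lambda^2(p^m)l_2(p^m)\ll p^{m(1+\epsilon)}$. Set
\[
A=\sum_{m\ge1}\frac{\lambda^2(p^m)\,l_2(p^m)}{p^{ms}},\qquad B=\sum_{m\ge1}\frac{b(p^m)}{p^{ms}}.
\]
For $\Re(s)>2+2\epsilon$ we have $|B|<1$, so $(1+A)/(1+B)$ can be expanded. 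Its $p^1$ term cancels by the previous step, leaving $1+\sum_{m\ge2}c(p^m)p^{-ms}$ with $c(p^m)\ll p^{m(1+\epsilon)}$.

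Then $H_j^{*(2)}(s):=\prod_p (1+A)/(1+B)=\sum_n c(n)n^{-s}$. Its tail at each prime is
\[
\ll \sum_{m\ge2} p^{m(1+\epsilon-\sigma)}\ll p^{-2(\sigma-1-\epsilon)}.
\]
This is summable over $p$ when $2(\sigma-1-\epsilon)>1$, that is, for $\sigma>\tfrac32+\epsilon$. This gives absolute and uniform convergence in $\Re(s)>\tfrac32$, and the identity $F_j^{*(2)}=G_j^{*(2)}H_j^{*(2)}$ extends there by analytic continuation.

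The main obstacle is making the uniform bound $c(p^m)\ll p^{m(1+\epsilon)}$ rigorous when $m$ ranges over all values and not just small ones. The geometric expansion of $(1+A)/(1+B)$ mixes terms of arbitrary order, so the bound has to hold with an implied constant independent of $m$ and $p$. To get this, I would compare with majorants. Each of $1+A$ and $(1+B)^{-1}$ has its coefficients dominated by those of a fixed power of $(1-p^{1-s})^{-1}(1-p^{-s})^{-1}$. The power is polynomial in $j$, since all Satake parameters have modulus one. A divisor-type count of the product of these majorants then gives $c(p^m)\ll_j (m+1)^{C_j}p^{m}$, which suffices.
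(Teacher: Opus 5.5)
Your proposal is correct and takes the same route the paper uses: the paper cites \cite{Hua2022} for this lemma, and its model argument is the proof of Lemma~\ref{lem:F1}. In both, you match the $p$-th Euler coefficients via Lemma~\ref{lambdasquare} and $l_2(p)=1+p$, and then show that $H_j^{*(2)}$ has no $p^{-s}$ term and higher coefficients of size $O_j\bigl((m+1)^{C_j}p^{m}\bigr)$, which gives absolute convergence for $\Re(s)>\tfrac32$. Your majorant argument for uniformity in $m$ is more careful than the paper's sketch, and because $(1+B)^{-1}=\prod_i(1-\alpha_ip^{-s})(1-\alpha_ip^{1-s})$ is a polynomial in $p^{-s}$, you do not actually need the claim $|B|<1$, which need not hold at small primes.
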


\begin{lemma}[{\cite{SharmaSankaranarayanan2023}}]\label{lem:F3*'}
Let \(f\) be a normalised primitive holomorphic cusp form of weight \(k\) for \(\operatorname{SL}(2,\mathbb{Z})\), 
and let \(\lambda_{\operatorname{sym}^{j}f}(n)\) denote the \(n\)th normalised Fourier coefficient 
of the \(j\)th symmetric power \(L\)-function attached to \(f\). Define
\[
F_{j_1}^{*(3)}(s)=\sum_{n=1}^{\infty}\frac{\lambda_{\operatorname{sym}^{j}f}^2(n)\,l_3(n)}{n^{s}}, \qquad \Re(s)>3,
\]
where \(l_3(n)\) is given by \eqref{eq:l_3(n)&v_3(n)}. Then \(F_{j_1}^{*(3)}(s)\) admits a factorisation
\[
F_{j_1}^{(3)}(s)=G_{j_1}^{*(3)}(s)\,H_{j_1}^{*(3)}(s),
\]
in which
\[
G_{j_1}^{*(3)}(s):=L(s,\chi_4)\zeta(s-2)\prod_{n=1}^{j}L\!\bigl(s,\operatorname{sym}^{2n}f \otimes \chi_4\bigr)\; 
        L\!\bigl(s-2,\operatorname{sym}^{2n}f\bigr)
\]
and \(H_{j_1}^{*(3)}(s)\) is a Dirichlet series converging absolutely and uniformly 
in the half-plane \(\Re(s)>\frac{5}{2}\).
\end{lemma}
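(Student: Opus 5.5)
The proof follows the template of Lemma~\ref{lem:F1}: compare the Euler factors of $F_{j_1}^{*(3)}(s)$ and $G_{j_1}^{*(3)}(s)$ prime by prime. The main task is to show that they agree at $p^{1}$. Write $b(n)$ for the coefficients of $G_{j_1}^{*(3)}(s)$. Since $F_{j_1}^{*(3)}$ involves $\lambda_{\mathrm{sym}^jf}^2(n)l_3(n)$ and $l_3(n)\ll n^{2+\epsilon}$, it is natural to work with the shifted variable $w=s-2$. The factorisation then becomes a statement about the series $\sum \lambda^2_{\mathrm{sym}^jf}(n)l_3(n)n^{-2}\, n^{-w}$ in $\Re(w)>1$, which has coefficients $\ll n^{\epsilon}$.

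First I compute both sides at a prime. By definition $l_3(p)=p^2+\chi_4(p)$. Lemma~\ref{lambdasquare} then gives
\[
\lambda_{\mathrm{sym}^jf}^2(p)l_3(p)=p^2\Bigl(1+\sum_{n=1}^{j}\lambda_{\mathrm{sym}^{2n}f}(p)\Bigr)+\chi_4(p)\Bigl(1+\sum_{n=1}^{j}\lambda_{\mathrm{sym}^{2n}f}(p)\Bigr).
\]
On the other side, $b(p)$ is the sum of the $p$-coefficients of the individual factors. These are $\chi_4(p)$ from $L(s,\chi_4)$, $p^2$ from $\zeta(s-2)$, $\lambda_{\mathrm{sym}^{2n}f}(p)\chi_4(p)$ from $L(s,\mathrm{sym}^{2n}f\otimes\chi_4)$, and $p^2\lambda_{\mathrm{sym}^{2n}f}(p)$ from $L(s-2,\mathrm{sym}^{2n}f)$. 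Adding them gives exactly the displayed expression, so $b(p)=\lambda_{\mathrm{sym}^jf}^2(p)l_3(p)$.

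Next I bound the coefficients. Each factor of $G_{j_1}^{*(3)}$ is an Euler product of degree at most $2j+1$ with Satake parameters of modulus $\le 1$, up to the shift by $p^2$. Hence $|b(p^m)|\le d_{K}(p^m)p^{2m}\ll p^{m(2+\epsilon)}$ with $K=2+\sum_{n=1}^{j}2(2n+1)$. Likewise $|\lambda^2_{\mathrm{sym}^jf}(p^m)l_3(p^m)|\ll p^{m(2+\epsilon)}$ by \eqref{eq:lambdaSymbound} and the trivial bound for $l_3$.

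Then I form the Euler-factor quotient exactly as in Lemma~\ref{lem:F1}. Put $A=\sum_{m\ge1}\lambda^2_{\mathrm{sym}^jf}(p^m)l_3(p^m)p^{-ms}$ and $B=\sum_{m\ge1}b(p^m)p^{-ms}$, so that $|B|<1$ for $\Re(s)>3+2\epsilon$. Expand $(1+A)/(1+B)=1+A-B-AB+\cdots$. The $p^{-s}$ terms cancel by the first step. The remaining coefficients $c(p^m)$ with $m\ge2$ satisfy $c(p^m)\ll p^{m(2+\epsilon)}$. Defining $H_{j_1}^{*(3)}(s)=\prod_p (1+A)/(1+B)=\sum c(n)n^{-s}$, convergence reduces to the sum
\[
\sum_p\Bigl(\frac{p^{2(2+\epsilon)}}{p^{2\sigma}}+\sum_{m\ge3}\frac{p^{m(2+\epsilon)}}{p^{m\sigma}}\Bigr)\ll\sum_p p^{-2(\sigma-2-\epsilon)},
\]
which is finite for $\sigma>\frac52+\epsilon$. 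This gives absolute and uniform convergence of $H_{j_1}^{*(3)}$ in $\Re(s)>\frac52$, which in turn extends the identity $F=GH$ from $\Re(s)>3$.

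The main obstacle is justifying that the $p^{-s}$ coefficients cancel exactly, since this is the only non-routine step. It depends on two facts: the prime-level identity of Lemma~\ref{lambdasquare}, and correctly reading off $\lambda_{\mathrm{sym}^{2n}f\otimes\chi_4}(p)=\chi_4(p)\lambda_{\mathrm{sym}^{2n}f}(p)$. For $p=2$ we have $\chi_4(2)=0$, so this prime needs a separate check, but it is harmless because the identity still holds with those terms vanishing. The only other care needed is tracking the uniform $p^{2m}$ growth, so that the $m\ge2$ tail is controlled by $p^{-2(\sigma-2)}$ rather than something weaker.
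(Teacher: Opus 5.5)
Your proposal is correct and follows essentially the same route as the paper, namely the template of Lemma~\ref{lem:F1}, which the paper says the cited proof follows. You match the Euler factors at $p$ via $l_3(p)=p^2+\chi_4(p)$ and Lemma~\ref{lambdasquare}, then bound the higher coefficients of the local quotient to get absolute convergence of $H_{j_1}^{*(3)}$ for $\Re(s)>\frac52$; the uniform bound $c(p^m)\ll p^{m(2+\epsilon)}$, which you assert without proof, is most cleanly justified by noting that $1/(1+B)=\prod_i(1-\gamma_i p^{-s})$ is a polynomial of degree $2(j+1)^2$ in $p^{-s}$ with $|\gamma_i|\le p^2$, though the paper states the analogous bound just as briefly.
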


\begin{lemma}[{\cite{SharmaSankaranarayanan2023}}]\label{lem:F3*''}
Let \(f\) be a normalised primitive holomorphic cusp form of weight \(k\) for \(\operatorname{SL}(2,\mathbb{Z})\)
and let \(\lambda_{\operatorname{sym}^{j}f}(n)\) denote the \(n\)th normalised Fourier coefficient 
of the \(j\)th symmetric power \(L\)-function attached to \(f\). Define
\[
F_{j_2}^{*(3)}(s)=\sum_{n=1}^{\infty}\frac{\lambda_{\operatorname{sym}^{j}f}^2(n)\,v_3(n)}{n^{s}}, \qquad \Re(s)>3,
\]
where \(v_3(n)\) is given by \eqref{eq:l_3(n)&v_3(n)}. Then \(F_{j_2}^{*(3)}(s)\) admits a factorisation
\[
F_{j_2}^{(3)}(s)=G_{j_2}^{*(3)}(s)\,H_{j_2}^{*(3)}(s),
\]
in which
\[
G_{j_2}^{*(3)}(s):=L(s-2,\chi_4)\zeta(s)\prod_{n=1}^{j}L\!\bigl(s-2,\operatorname{sym}^{2n}f \otimes \chi_4\bigr)\; 
        L\!\bigl(s,\operatorname{sym}^{2n}f\bigr)
\]
and \(H_{j_2}^{*(3)}(s)\) is a Dirichlet series converging absolutely and uniformly 
in the half-plane \(\Re(s)>\frac{5}{2}\).
\end{lemma}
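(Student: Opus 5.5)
We prove Lemma~\ref{lem:F3*''} along the same lines as Lemma~\ref{lem:F1}. We compare Euler factors of $F_{j_2}^{*(3)}(s)$ and $G_{j_2}^{*(3)}(s)$ at primes, show they agree to first order, and conclude that the quotient has coefficients supported essentially on squarefull numbers.

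\textbf{Absolute convergence and Euler product.} First, $\lambda_{\mathrm{sym}^jf}^2(n)\ll n^{\epsilon}$ by \eqref{eq:lambdaSymbound}, and $v_3(n)=\sum_{d\mid n}d^2\chi_4(d)\ll n^{2+\epsilon}$. Hence $F_{j_2}^{*(3)}(s)$ converges absolutely for $\Re(s)>3$. Since $\lambda_{\mathrm{sym}^jf}^2$ and $v_3$ are both multiplicative, $F_{j_2}^{*(3)}(s)$ has an Euler product there.

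\textbf{Matching at primes.} Note $v_3(p)=1+\chi_4(p)p^2$. By Lemma~\ref{lambdasquare},
\[
\lambda_{\mathrm{sym}^jf}^2(p)v_3(p)=\Bigl(1+\sum_{n=1}^{j}\lambda_{\mathrm{sym}^{2n}f}(p)\Bigr)\bigl(1+\chi_4(p)p^2\bigr).
\]
Define $b(n)$ by $\sum_n b(n)n^{-s}=G_{j_2}^{*(3)}(s)$. The $p$-th coefficient of a product of $L$-functions is the sum of the $p$-th coefficients of the factors. Here:
\begin{itemize}
\item $\zeta(s)$ contributes $1$;
\item $L(s-2,\chi_4)$ contributes $\chi_4(p)p^2$;
\item $L(s,\mathrm{sym}^{2n}f)$ contributes $\lambda_{\mathrm{sym}^{2n}f}(p)$;
\item $L(s-2,\mathrm{sym}^{2n}f\otimes\chi_4)$ contributes $\lambda_{\mathrm{sym}^{2n}f}(p)\chi_4(p)p^2$.
\end{itemize}
Summing gives exactly $\bigl(1+\sum_n\lambda_{\mathrm{sym}^{2n}f}(p)\bigr)\bigl(1+\chi_4(p)p^2\bigr)$, so $b(p)=\lambda_{\mathrm{sym}^jf}^2(p)v_3(p)$.

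\textbf{The quotient $H_{j_2}^{*(3)}$.} We bound $b(p^m)\ll p^{2m+\epsilon m}$, since $b$ is a Dirichlet convolution of $2j+2$ functions each bounded by $n^{2+\epsilon}$. The same bound holds for $\lambda_{\mathrm{sym}^jf}^2(p^m)v_3(p^m)$.

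For $\Re(s)>3+2\epsilon$ write the local factors as $1+A$ and $1+B$, with $|B|<1$. Then
\[
\frac{1+A}{1+B}=1+\sum_{m\ge2}\frac{c(p^m)}{p^{ms}},\qquad c(p^m)\ll p^{m(2+\epsilon)}.
\]
There is no $m=1$ term because the prime coefficients agree. Setting $\sigma'=\sigma-2$, we get
\[
\sum_{m\ge2}\frac{|c(p^m)|}{p^{m\sigma}}\ll \frac{1}{p^{2\sigma'-2\epsilon}}.
\]
This sum over $p$ converges for $\sigma'>\tfrac12+\epsilon$, that is, for $\sigma>\tfrac52+\epsilon$. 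So $H_{j_2}^{*(3)}(s)=\prod_p(1+A)/(1+B)$ converges absolutely and uniformly in $\Re(s)>\tfrac52+\epsilon$, and by analytic continuation $F_{j_2}^{*(3)}=G_{j_2}^{*(3)}H_{j_2}^{*(3)}$.

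\textbf{Main obstacle.} The only delicate point is keeping the bound $c(p^m)\ll p^{m(2+\epsilon)}$ uniform when the geometric series for $(1+B)^{-1}$ is expanded for $\Re(s)$ near $\tfrac52$. The first expansion is valid only for $\Re(s)>3$. I would handle this by working formally on coefficients: $c$ is the convolution of $\lambda_{\mathrm{sym}^jf}^2 v_3$ with the Dirichlet inverse of $b$. That inverse is itself a product of inverses of the $L$-factors, so its coefficients are also $\ll p^{m(2+\epsilon)}$. This gives the bound on $c(p^m)$ directly, and the region follows as above.
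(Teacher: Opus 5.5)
Your proposal is correct and follows the paper's own route: the template proof of Lemma~\ref{lem:F1}, along whose lines the paper says these cited lemmas go. You match the prime coefficients via Lemma~\ref{lambdasquare} and $v_3(p)=1+\chi_4(p)p^2$, note that the quotient of local factors has no $p^{-s}$ term, and bound $c(p^m)\ll p^{m(2+\epsilon)}$ to obtain absolute convergence for $\Re(s)>\frac{5}{2}+\epsilon$. Your handling of $(1+B)^{-1}$ through the Dirichlet inverse of $G_{j_2}^{*(3)}$, whose local factors are polynomials in $p^{-s}$ with coefficients $\ll p^{2m}$, is slightly cleaner than the paper's argument, which ignores the constants needed to guarantee $|B|<1$.
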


\begin{lemma}[{\cite{PanWang2025}}]\label{lem:F4*}
Let \(f\) be a normalised primitive holomorphic cusp form of weight \(k\) for \(\operatorname{SL}(2,\mathbb{Z})\) 
and let \(\lambda_{\operatorname{sym}^{j}f}(n)\) denote the \(n\)th normalised Fourier coefficient 
of the \(j\)th symmetric power \(L\)-function attached to \(f\). Define
\[
F_{j}^{*(4)}(s)=\sum_{n=1}^{\infty}\frac{\lambda_{\operatorname{sym}^{j}f}^2(n)\,l_4(n)}{n^{s}}, \qquad \Re(s)>4,
\]
where \(l_4(n)\) is given by \eqref{eq:l_4(n)}. Then \(F_{j}^{*(4)}(s)\) admits a factorisation
\[
F_{j}^{(4)}(s)=G_{j}^{*(4)}(s)\,H_{j}^{*(4)}(s),
\]
in which
\[
G_{j}^{*(4)}(s):=\zeta(s)\zeta(s-3)\prod_{n=1}^{j}L\!\bigl(s,\operatorname{sym}^{2n}f\bigr)\; 
        L\!\bigl(s-3,\operatorname{sym}^{2n}f\bigr)
\]
and \(H_{j}^{*(2)}(s)\) is a Dirichlet series converging absolutely and uniformly 
in the half-plane \(\Re(s)>\frac{7}{2}\).
\end{lemma}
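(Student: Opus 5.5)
The plan is the same Euler-product comparison used in the proof of Lemma~\ref{lem:F1}: match the local factors of $F_{j}^{*(4)}(s)$ and $G_{j}^{*(4)}(s)$ at each prime to first order, and show that the quotient only has terms of order $p^{-2s}$ and beyond.

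\textbf{Step 1: setup.} By \eqref{eq:lambdaSymbound} and $l_4(n)\ll n^{3+\epsilon}$, the series $F_{j}^{*(4)}(s)$ converges absolutely for $\Re(s)>4$. Since $\lambda_{\mathrm{sym}^jf}^2$ is multiplicative and $l_4$ is multiplicative (a twisted divisor sum), $F_{j}^{*(4)}(s)$ has an Euler product $\prod_p\bigl(1+\sum_{m\ge1}\lambda^2_{\mathrm{sym}^jf}(p^m)l_4(p^m)p^{-ms}\bigr)$. Write $G_{j}^{*(4)}(s)=\sum_n b^*(n)n^{-s}$, where $b^*$ is multiplicative.

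\textbf{Step 2: matching at odd primes.} For odd $p$, every divisor of $p$ is odd, so all signs $(-1)^{p+d}$ equal $+1$ and $l_4(p)=1+p^3$. On the other side, the $p^{-s}$ coefficient of $\zeta(s)\prod_{n=1}^{j}L(s,\mathrm{sym}^{2n}f)$ is $1+\sum_{n=1}^{j}\lambda_{\mathrm{sym}^{2n}f}(p)$. By Lemma~\ref{lambdasquare} this equals $\lambda^2_{\mathrm{sym}^jf}(p)$. The shifted factor $\zeta(s-3)\prod_{n=1}^{j}L(s-3,\mathrm{sym}^{2n}f)$ contributes $p^3\lambda^2_{\mathrm{sym}^jf}(p)$ in the same way. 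Hence
\[
b^*(p)=\lambda^2_{\mathrm{sym}^jf}(p)(1+p^3)=\lambda^2_{\mathrm{sym}^jf}(p)\,l_4(p).
\]

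\textbf{Step 3: the prime $2$.} Here $l_4(2)=-1+8=7\neq 1+2^3$, so the first-order terms do not match. This affects only one Euler factor. That local factor of $F_{j}^{*(4)}$ is holomorphic and bounded for $\Re(s)>4-\delta$ (and, after checking the coefficient growth $\ll 2^{m(3+\epsilon)}$, throughout $\Re(s)>\frac72$). The local factor of $G_{j}^{*(4)}$ at $2$ is a finite product of factors $(1-\gamma 2^{-s})^{-1}$ and $(1-\gamma 2^{3-s})^{-1}$ with $|\gamma|=1$, so its reciprocal is a polynomial in $2^{-s}$. The quotient of the two factors at $2$ is therefore absolutely convergent in $\Re(s)>\frac72$. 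It is absorbed into $H_{j}^{*(4)}$ without affecting the region of convergence.

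\textbf{Step 4: the remaining primes.} For $p$ odd, set $A=\sum_{m\ge1}\lambda^2_{\mathrm{sym}^jf}(p^m)l_4(p^m)p^{-ms}$ and $B=\sum_{m\ge1}b^*(p^m)p^{-ms}$. Both sets of coefficients are $\ll p^{m(3+\epsilon)}$; for $b^*$ this follows from Deligne-type bounds on the Satake parameters, giving $b^*(p^m)\ll d_{K}(p^m)p^{3m}$ for some fixed $K$. As in Lemma~\ref{lem:F1}, in $\Re(s)>4+2\epsilon$ we expand
\[
\frac{1+A}{1+B}=1+\sum_{m\ge2}\frac{c(p^m)}{p^{ms}}, \qquad c(p^m)\ll p^{m(3+\epsilon)},
\]
where the $m=1$ term vanishes by Step 2. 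Then $\sum_p\bigl|\sum_{m\ge2}c(p^m)p^{-ms}\bigr|\ll\sum_p p^{6+2\epsilon-2\sigma}$, which converges for $\sigma>\frac72+\epsilon$. Hence $H_{j}^{*(4)}(s)=\prod_p(1+A)/(1+B)$ converges absolutely and uniformly in $\Re(s)>\frac72$, and $F_{j}^{*(4)}=G_{j}^{*(4)}H_{j}^{*(4)}$ holds first for $\Re(s)>4$ and then by analytic continuation.

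\textbf{Main obstacle.} The delicate points are the sign bookkeeping in $l_4$ at $p=2$ and the uniform bound $c(p^m)\ll p^{m(3+\epsilon)}$. For the latter it is cleanest to write $b^*(p^m)$ as a convolution of the two shifted pieces and bound each by a divisor function times $p^{3m}$.
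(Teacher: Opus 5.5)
Your proposal is correct and follows the same local Euler-factor comparison as the paper's proof of Lemma~\ref{lem:F1}; the paper gives no separate proof of this lemma and simply cites \cite{PanWang2025}. Your separate treatment of $p=2$ is a necessary refinement of that template, because $l_4(2)=7\neq 1+2^3$ means the first-order coefficients genuinely fail to match there, and absorbing this single factor into $H_{j}^{*(4)}$ is exactly right.

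For the uniform bound on $c(p^m)$, apply the Step~3 observation at every prime. The factor $(1+B)^{-1}=\prod_{i=1}^{(j+1)^2}(1-\gamma_i p^{-s})(1-\gamma_i p^{3-s})$ is a polynomial in $p^{-s}$ whose $k$th coefficient is at most $\binom{2(j+1)^2}{k}p^{3k}$, which gives $c(p^m)\ll_j (m+1)^{2j}p^{3m}$ uniformly in $p$. By contrast, bounding $b^*(p^m)$ and expanding $\sum_k(-B)^k$ termwise only yields a bound of the shape $C_j^{m}p^{3m}$, which is not $\ll p^{m(3+\epsilon)}$ at small primes.
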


The next three lemmas can be proved similarly to those in~\cite{Kaur2026, PanWang2025, SharmaSankaranarayanan2023}.

\begin{lemma} \label{lem:F5*'}
Let $f$ be a normalised primitive holomorphic cusp form of weight $k$ for $SL(2,\mathbb{Z})$ and let $\lambda_{\mathrm{sym}^{j}f}(n)$ be the $n$th normalised Fourier coefficient of the $j$th symmetric power $L$-function associated to $f$. Define
\[
F_{j_1}^{*(5)}(s)=\sum_{n=1}^{\infty}\frac{\lambda_{\mathrm{sym}^{j}f}^{2}(n)l_5(n)}{n^{s}}, \quad \Re(s) > 5,
\]
where \(l_5(n)\) is given by \eqref{eq:l_5(n)&v_5(n)}. Then $F_{j_1}^{*(5)}(s)$ admits a factorisation
\[
F_{j_1}^{*(5)}(s)=G_{j_1}^{*(5)}(s)H_{j_1}^{*(5)}(s),
\]
in which
\[
G_{j_1}^{*(5)}(s):=\zeta(s-4)L(s,\chi_4)\prod_{n=1}^{j}L(s-4,\mathrm{sym}^{2n}f)L(s,\mathrm{sym}^{2n}f\otimes\chi_4)
\]
and $H_{j_1}^{*(5)}(s)$ is a Dirichlet series that converges uniformly and absolutely in the half plane $\Re(s)>\frac{9}{2}$. 
\end{lemma}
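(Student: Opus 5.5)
The plan is the same Euler-product comparison used in Lemma~\ref{lem:F1}: match the local factors of $F_{j_1}^{*(5)}(s)$ and $G_{j_1}^{*(5)}(s)$ at primes, then show the quotient converges absolutely for $\Re(s)>\frac92$.

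\emph{Absolute convergence and Euler product.} By \eqref{eq:lambdaSymbound} and $l_5(n)\ll n^{4+\epsilon}$ we have $\lambda_{\mathrm{sym}^jf}^2(n)l_5(n)\ll n^{4+\epsilon}$. Hence $F_{j_1}^{*(5)}(s)$ converges absolutely for $\Re(s)>5$. Since $l_5$ is a Dirichlet convolution of the multiplicative functions $\chi_4$ and $n\mapsto n^4$, it is multiplicative. So $F_{j_1}^{*(5)}(s)$ has an Euler product there.

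\emph{Matching at primes.} From \eqref{eq:l_5(n)&v_5(n)} (with $\chi=\chi_4$) we get $l_5(p)=\chi_4(p)+p^4$. By Lemma~\ref{lambdasquare},
\[
\lambda_{\mathrm{sym}^jf}^2(p)\,l_5(p)=\Bigl(1+\sum_{n=1}^{j}\lambda_{\mathrm{sym}^{2n}f}(p)\Bigr)\bigl(p^4+\chi_4(p)\bigr).
\]
Now write $\sum_m b(m)m^{-s}:=G_{j_1}^{*(5)}(s)$. Its coefficient at $p$ is the sum of the $p$-coefficients of the factors:
\begin{itemize}
\item $p^4$ from $\zeta(s-4)$;
\item $\chi_4(p)$ from $L(s,\chi_4)$;
\item $p^4\lambda_{\mathrm{sym}^{2n}f}(p)$ from $L(s-4,\mathrm{sym}^{2n}f)$;
\item $\chi_4(p)\lambda_{\mathrm{sym}^{2n}f}(p)$ from $L(s,\mathrm{sym}^{2n}f\otimes\chi_4)$.
\end{itemize}
Their total is exactly the expression above, so $b(p)=\lambda_{\mathrm{sym}^jf}^2(p)l_5(p)$ for every prime $p$.

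\emph{Bounding the quotient.} As a product of $2j+2$ absolutely convergent series, $G_{j_1}^{*(5)}$ satisfies $b(m)\ll m^{4+\epsilon}$. Define $H_{j_1}^{*(5)}:=F_{j_1}^{*(5)}/G_{j_1}^{*(5)}$ prime by prime as $\prod_p(1+A_p)/(1+B_p)$, exactly as in Lemma~\ref{lem:F1}. Here $A_p,B_p$ are the prime-power tails of the two local factors, and $|B_p|<1$ for $\Re(s)>5+2\epsilon$. Expanding, the coefficient $c(p)$ vanishes because the $p$-coefficients agree. The remaining coefficients satisfy $c(p^m)\ll p^{m(4+\epsilon)}$ for $m\ge2$. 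Therefore
\[
\sum_{m\ge2}\bigl|c(p^m)p^{-ms}\bigr|\ll p^{-2(\sigma-4-\epsilon)},
\]
and $\sum_p p^{-2(\sigma-4-\epsilon)}$ converges for $\sigma>\frac92+\epsilon$. This gives absolute and uniform convergence of $H_{j_1}^{*(5)}(s)$ in $\Re(s)>\frac92$, and $H_{j_1}^{*(5)}(s)\ll_\epsilon1$ there.

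\emph{Main obstacle.} The delicate step is the uniform bound $c(p^m)\ll p^{m(4+\epsilon)}$. The constant must not grow exponentially in $m$ in a way that breaks summability. To control it, I would follow Lemma~\ref{lem:F1} and use the crude bounds $|\lambda_{\mathrm{sym}^jf}(p^m)|\le d_{j+1}(p^m)\ll(m+1)^j$ and $|l_5(p^m)|\le(m+1)p^{4m}$. This shows each local series is dominated by $\sum_m (m+1)^{C}p^{m(4-\sigma)}$, which is harmless for $\sigma>\frac92$.
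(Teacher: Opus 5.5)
Your proposal is correct and is essentially the argument the paper intends: the paper omits the proof of this lemma and points to the Euler-product comparison of Lemma~\ref{lem:F1}, which is exactly what you carry out, matching the prime coefficients via Lemma~\ref{lambdasquare} and $l_5(p)=p^4+\chi_4(p)$ and then showing the local factors of the quotient are $1+O(p^{-2(\sigma-4)+\epsilon})$. The step you flag as the main obstacle needs a different justification: bounding the local series of $F_{j_1}^{*(5)}$ and $G_{j_1}^{*(5)}$ separately does not control the coefficients of $(1+B_p)^{-1}$, and with such crude bounds $|B_p|<1$ actually fails at small primes; the fix is to use that $(1+B_p)^{-1}$ is a polynomial of degree $2(j+1)^2$ in $p^{-s}$ (a product of $(j+1)^2$ factors $1-\gamma p^{4-s}$ and $(j+1)^2$ factors $1-\gamma' p^{-s}$ with $|\gamma|,|\gamma'|\le 1$), so its $k$th coefficient is $\ll_j p^{4k}$, hence each $c(p^m)$ is a combination of at most $2(j+1)^2+1$ terms of size $\ll_j (m+1)^{2j+1}p^{4m}$, which gives the bound you need.
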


\begin{lemma} \label{lem:F5*''}
Let $f$ be a normalised primitive holomorphic cusp form of weight $k$ for $SL(2,\mathbb{Z})$, and let $\lambda_{\mathrm{sym}^{j}f}(n)$ be the $n$th normalised Fourier coefficient of the $j$th symmetric power $L$-function associated to $f$. Define
\[
F_{j_2}^{*(5)}(s)=\sum_{n=1}^{\infty}\frac{\lambda_{\mathrm{sym}^{j}f}^{2}(n)v_5(n)}{n^{s}}, \quad \Re(s) > 5,
\]
where \(v_5(n)\) is given by \eqref{eq:l_5(n)&v_5(n)}. Then $F_{j_2}^{*(5)}(s)$ admits a factorisation
\[
F_{j_2}^{*(5)}(s)=G_{j_2}^{*(5)}(s)H_{j_2}^{*(5)}(s),
\]
in which
\[
G_{j_2}^{*(5)}(s):=\zeta(s)L(s-4,\chi_4)\prod_{n=1}^{j}L(s,\mathrm{sym}^{2n}f)L(s-4,\mathrm{sym}^{2n}f\otimes\chi_4),
\]
and $H_{j_2}^{*(5)}(s)$ is a Dirichlet series that converges uniformly and absolutely in the half plane $\Re(s)>\frac{9}{2}$.
\end{lemma}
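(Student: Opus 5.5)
The factorisation will be proved by comparing Euler factors at each prime, in the same way as for Lemma~\ref{lem:F1}. We only need to check that the Dirichlet coefficients of $F_{j_2}^{*(5)}(s)$ and $G_{j_2}^{*(5)}(s)$ agree at primes. Once that holds, the quotient of the local factors is $1+O(p^{-2\sigma+8+\epsilon})$ near $\sigma>9/2$, after the shift by $4$.

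\textbf{Step 1 (values at primes).} The function $v_5(n)=\sum_{d\mid n}\chi_4(d)d^4$ is multiplicative, being the convolution of $1$ with $\chi_4(n)n^4$. At a prime, $v_5(p)=1+\chi_4(p)p^4$. By Lemma~\ref{lambdasquare},
\[
\lambda^2_{\mathrm{sym}^jf}(p)=1+\sum_{n=1}^{j}\lambda_{\mathrm{sym}^{2n}f}(p).
\]
Hence the coefficient of $p^{-s}$ in $F_{j_2}^{*(5)}$ is
\[
\Bigl(1+\sum_{n=1}^j\lambda_{\mathrm{sym}^{2n}f}(p)\Bigr)\bigl(1+\chi_4(p)p^4\bigr).
\]
Now write $G_{j_2}^{*(5)}(s)=\sum b(n)n^{-s}$. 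Its coefficient at $p$ is the sum of the $p$-coefficients of the factors:
\[
1+\chi_4(p)p^4+\sum_{n=1}^{j}\bigl(\lambda_{\mathrm{sym}^{2n}f}(p)+\lambda_{\mathrm{sym}^{2n}f}(p)\chi_4(p)p^4\bigr).
\]
Here $1$ comes from $\zeta(s)$, $\chi_4(p)p^4$ from $L(s-4,\chi_4)$, and the last two terms from $L(s,\mathrm{sym}^{2n}f)$ and $L(s-4,\mathrm{sym}^{2n}f\otimes\chi_4)$. The two expressions agree, so $b(p)=\lambda^2_{\mathrm{sym}^jf}(p)v_5(p)$.

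\textbf{Step 2 (the quotient).} Define $H_{j_2}^{*(5)}=F_{j_2}^{*(5)}/G_{j_2}^{*(5)}$ on $\Re(s)>5$, where both Euler products converge absolutely. We have the bounds $\lambda^2_{\mathrm{sym}^jf}(n)v_5(n)\ll n^{4+\epsilon}$ and $b(n)\ll n^{4+\epsilon}$; the latter holds because the coefficients of $G$ are a convolution of $2(j+1)$ sequences, each $\ll n^{4+\epsilon}$ or $\ll n^{\epsilon}$. Write the local factors as $1+A$ and $1+B$, with $|B|<1$ for $\sigma>5+2\epsilon$.

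Expanding $(1+A)(1+B)^{-1}$ as in Lemma~\ref{lem:F1}, the $p^{-s}$ terms cancel by Step 1. The local factor of $H$ then becomes
\[
1+\sum_{m\ge2}c(p^m)p^{-ms}, \qquad c(p^m)\ll p^{m(4+\epsilon)}.
\]
The crude bound $|c(p^m)|\ll_\epsilon p^{4m+m\epsilon}$ follows by induction from the recursion $c(p^m)=a(p^m)-\sum_{k\ge1}b(p^k)c(p^{m-k})$. This gives
\[
\sum_p\sum_{m\ge2}|c(p^m)|p^{-m\sigma}\ll\sum_p p^{-2(\sigma-4-\epsilon)},
\]
which converges for $\sigma>9/2+\epsilon$. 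So $H_{j_2}^{*(5)}(s)=\sum c(n)n^{-s}$ converges absolutely and uniformly on compact subsets of $\Re(s)>9/2$.

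\textbf{Main obstacle.} The only real difficulty is the bound on $c(p^m)$. The coefficients are no longer $\ll n^\epsilon$, so the geometric-series argument of Lemma~\ref{lem:F1} must be run after the shift $s\mapsto s+4$. One way is to note that $p^{-4m}a(p^m)$ and $p^{-4m}b(p^m)$ are $\ll p^{m\epsilon}$, which reduces the problem exactly to the setting of Lemma~\ref{lem:F1} with $\sigma$ replaced by $\sigma-4$. The key point there is that $m\ge2$, which is what yields the threshold $\sigma-4>1/2$.
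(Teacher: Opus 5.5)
Your proposal is correct and is essentially the argument the paper intends: the paper defers to the Euler-product comparison written out for Lemma~\ref{lem:F1}, and you run it with the right inputs. You use Lemma~\ref{lambdasquare} and $v_5(p)=1+\chi_4(p)p^4$ to match the $p^{-s}$ coefficients, then shift by $4$, so the first surviving term $c(p^2)p^{-2s}\ll p^{8+\epsilon-2\sigma}$ gives the threshold $\sigma>9/2$. The one soft spot, which the paper's own Lemma~\ref{lem:F1} shares, is the bound on $c(p^m)$: a triangle-inequality induction on the recursion only gives constants growing like $C_j^m$, which is harmless for large $p$ but not for small $p$. It is cleaner to note that $(1+B)^{-1}$ is a polynomial in $p^{-s}$ of degree at most $2(j+1)^2$ with $k$th coefficient $\ll_j p^{4k}$, which gives $c(p^m)\ll_j (m+1)^{2j+1}p^{4m}$ uniformly in $p$.
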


\begin{lemma} \label{lem:F6*}
Let $f$ be a normalised primitive holomorphic cusp form of weight $k$ for $SL(2,\mathbb{Z})$, and let $\lambda_{\mathrm{sym}^{j}f}(n)$ be the $n$th normalised Fourier coefficient of the $j$th symmetric power $L$-function associated to $f$. Define
\[
F_{j}^{*(6)}(s)=\sum_{n=1}^{\infty}\frac{\lambda_{\mathrm{sym}^{j}f}^{2}(n)l_6(n)}{n^{s}}, \quad \Re(s) > 6,
\]
where \(l_6(n)\) is given by \eqref{eq:l_6(n)}. Then $F_{j}^{*(6)}(s)$ admits a factorisation
\[
F_{j}^{*(6)}(s)=G_{j}^{*(6)}(s)H_{j}^{*(6)}(s),
\]
in which
\[
G_{j}^{*(6)}(s):=\zeta(s-5)\zeta(s)\prod_{n=1}^{j}L(s-5,\mathrm{sym}^{2n}f)L(s,\mathrm{sym}^{2n}f),
\]
and $H_{j}^{*(6)}(s)$ is a Dirichlet series that converges uniformly and absolutely in the half plane $\Re(s)>\frac{11}{2}$. 
\end{lemma}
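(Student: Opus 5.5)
The plan is to mirror the proof of Lemma~\ref{lem:F1}, now for the coefficients $\lambda_{\mathrm{sym}^{j}f}^{2}(n)\,l_6(n)$. The bounds $\lambda_{\mathrm{sym}^{j}f}^{2}(n)\ll n^{\epsilon}$ from \eqref{eq:lambdaSymbound} and $|l_6(n)|\le \sigma_5(n)\ll n^{5+\epsilon}$ give absolute convergence of $F_j^{*(6)}(s)$ for $\Re(s)>6$. Multiplicativity of $l_6$ needs a separate argument, because the sign $(-1)^{n+d+n/d-1}$ is not obviously multiplicative. For odd $n$ every divisor $d$ and cofactor $n/d$ is odd, so the sign is $(-1)^{2}=1$ and $l_6(n)=\sigma_5(n)$. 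For $n=2^a m$ with $m$ odd, I will split each divisor as $d=2^b d'$ and check that the sign depends only on $(a,b)$. Then $l_6(n)=l_6(2^a)\sigma_5(m)$, so $\lambda_{\mathrm{sym}^jf}^2 l_6$ is multiplicative and $F_j^{*(6)}$ has an Euler product
\[
F_j^{*(6)}(s)=\prod_p\Bigl(1+\sum_{r\ge1}\frac{\lambda_{\mathrm{sym}^jf}^2(p^r)\,l_6(p^r)}{p^{rs}}\Bigr).
\]

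Next I match the coefficients at primes. For odd $p$, $l_6(p)=1+p^5$. For $p=2$, the divisor $d=1$ contributes $(-1)^{2+1+2-1}=1$ and $d=2$ contributes $(-1)^{2+2+1-1}\cdot 2^5=2^5$, so $l_6(2)=1+2^5$ as well. Combining this with Lemma~\ref{lambdasquare},
\[
\lambda_{\mathrm{sym}^jf}^2(p)\,l_6(p)=\Bigl(1+\sum_{n=1}^{j}\lambda_{\mathrm{sym}^{2n}f}(p)\Bigr)(1+p^5).
\]
This is exactly the $p$-th Dirichlet coefficient of $G_j^{*(6)}(s)=\zeta(s-5)\zeta(s)\prod_{n=1}^{j}L(s-5,\mathrm{sym}^{2n}f)L(s,\mathrm{sym}^{2n}f)$. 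Let $b^*(n)$ denote the coefficients of $G_j^{*(6)}$. They satisfy $b^*(n)\ll n^{5+\epsilon}$, since $G_j^{*(6)}$ is a convolution of $j+1$ series with coefficients $\ll n^{\epsilon}$ and $j+1$ series with coefficients $\ll n^{5+\epsilon}$. Hence $G_j^{*(6)}$ converges absolutely for $\Re(s)>6$.

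I then define $H_j^{*(6)}=F_j^{*(6)}/G_j^{*(6)}$ prime by prime, setting $A=\sum_{r\ge1}\lambda^2(p^r)l_6(p^r)p^{-rs}$ and $B=\sum_{r\ge1}b^*(p^r)p^{-rs}$. As in Lemma~\ref{lem:F1}, $(1+A)/(1+B)=1+\sum_{r\ge2}c(p^r)p^{-rs}$, because the $p^{-s}$ terms cancel. The coefficients satisfy $c(p^r)\ll p^{5r+\epsilon r}$, since the relevant coefficients of $A$, $B$ and their products are all $\ll p^{r(5+\epsilon)}$. For $\sigma=\Re(s)$, the local factor is therefore $1+O\bigl(p^{-2(\sigma-5-\epsilon)}\bigr)$. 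The product over $p$ converges absolutely when $2(\sigma-5-\epsilon)>1$, that is for $\Re(s)>\frac{11}{2}+\epsilon$; letting $\epsilon\to0$ gives absolute and uniform convergence in $\Re(s)>\frac{11}{2}$, with $H_j^{*(6)}(s)\ll_\epsilon 1$ on $\Re(s)\ge\frac{11}{2}+\epsilon$.

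The main obstacle is the bookkeeping with the extra $p^5$ weights. One has to check that the expansion of $(1+A)(1+B)^{-1}$ has no $p^{-s}$ term. One also has to check that the $p^{-2s}$ coefficient is $O(p^{10+\epsilon})$ rather than larger, since a cross term like $p^{5}\cdot p^{5}$ is acceptable but anything larger would push the abscissa past $\frac{11}{2}$. Both follow from the uniform bound $|{\rm coef}(p^r)|\ll p^{r(5+\epsilon)}$ for each of $A$, $B$ and their powers.

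The prime $2$, where the sign in $l_6$ intervenes, affects only finitely many Euler factors and so cannot change the half-plane of convergence. The one thing I must confirm there is that the factor $(1+A_2)/(1+B_2)$ is nonvanishing and holomorphic for $\Re(s)>\frac{11}{2}$. That holds because $1+B_2$ is the local factor of $G_j^{*(6)}$ at $2$, a finite product of factors $(1-\gamma\,2^{5-s})^{-1}$ and $(1-\gamma\,2^{-s})^{-1}$ with $|\gamma|=1$, which is nonvanishing for $\Re(s)>5$.
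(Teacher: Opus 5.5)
Your proposal is correct and follows the route the paper intends. The paper omits this proof and defers to the local-factor comparison of Lemma~\ref{lem:F1}: match the prime coefficients via Lemma~\ref{lambdasquare} and $l_6(p)=1+p^5$, then show the quotient of Euler factors is $1+O\bigl(p^{-2(\sigma-5)+\epsilon}\bigr)$. Your explicit check that $l_6$ is multiplicative, including $l_6(2)=1+2^5$, is a useful addition the paper skips. To make your bound on $c(p^r)$ uniform in both $r$ and $p$, note that $(1+B)^{-1}$ is a polynomial of degree $2(j+1)^2$ in $p^{-s}$ whose $p^{-is}$-coefficient is at most $\binom{2(j+1)^2}{i}p^{5i}$; this gives $c(p^r)\ll_j (r+1)^{2j}p^{5r}$.
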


\begin{lemma} \label{lem:zetabound}
For $T \geq 2$, we have
\begin{align} \label{eq:zeta12mom}
\int_{1}^{T}\left|\zeta\left(\frac{1}{2}+\epsilon+it\right)\right|^{4}dt\ll_\epsilon T^{1+\epsilon},
\end{align}
and for any $\epsilon > 0$,
\begin{align} \label{eq:zetabound}
\zeta(\sigma+it)\ll_{\epsilon}(1+|t|)^{\max\{\frac{13}{42}(1-\sigma),0\}+\epsilon}
\end{align}
uniformly for $\frac{1}{2}\leq\sigma\leq 1+\epsilon$, $|t|\geq 1$.
\end{lemma}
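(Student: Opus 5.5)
Both assertions are classical facts about $\zeta(s)$ and do not involve $f$. I would prove each one by reducing it to a standard result on the critical line and then moving slightly to the right.

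For \eqref{eq:zeta12mom}, the starting point is the classical fourth moment (Ingham; Heath-Brown gives the sharper form with main term $T P_4(\log T)$):
\[
\int_{1}^{T}\left|\zeta\left(\tfrac12+it\right)\right|^{4}dt \ll T\log^{4}T .
\]
To pass from $\sigma=\frac12$ to $\sigma=\frac12+\epsilon$, I would use a convexity theorem for mean values of Dirichlet-type functions in vertical strips. This is Gabriel's theorem, equivalently Hardy--Ingham--P\'olya applied to $\zeta(s)^2$, or simply the Phragm\'en--Lindel\"of principle in mean. It says that $\log\int_1^T|\zeta(\sigma+it)|^4\,dt$ is, up to admissible error terms, a convex function of $\sigma$.

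The interpolation is between $\sigma=\frac12$, where the integral is $\ll T^{1+\epsilon}$, and $\sigma=1+\epsilon$, where $\zeta\ll_\epsilon 1$ and so the integral is $\ll T$. This gives $\ll_\epsilon T^{1+\epsilon}$ at $\sigma=\frac12+\epsilon$.

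Alternatively, one can expand $\zeta^2(\frac12+\epsilon+it)$ by an approximate functional equation with $d(n)n^{-1/2-\epsilon}$ coefficients. Applying the Montgomery--Vaughan mean value theorem then gives
\[
\ll \sum_{n\ll T}(T+n)\,d(n)^2 n^{-1-2\epsilon}\ll T^{1+\epsilon},
\]
and I would keep this second route as a fallback.

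For \eqref{eq:zetabound}, the input is Bourgain's subconvexity bound $\zeta(\frac12+it)\ll_\epsilon |t|^{13/84+\epsilon}$. The other endpoint is $\zeta(1+\epsilon+it)\ll_\epsilon 1$, which follows from absolute convergence. Since $\zeta(s)$ has finite order in the strip and no poles away from $s=1$, the Phragm\'en--Lindel\"of principle applies to $\zeta(s)(s-1)/s$ on $\frac12\le\sigma\le1+\epsilon$, $|t|\ge1$. Linear interpolation of the exponent gives
\[
\zeta(\sigma+it)\ll_\epsilon |t|^{\frac{13}{84}\cdot\frac{1+\epsilon-\sigma}{1/2+\epsilon}+\epsilon}.
\]
This exponent is $\le \frac{13}{42}(1-\sigma)+O(\epsilon)$ uniformly for $\frac12\le\sigma\le1+\epsilon$. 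Renaming $\epsilon$ and taking the maximum with $0$ (to cover $1<\sigma\le1+\epsilon$) yields the stated bound.

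The main obstacle is purely bookkeeping. The interpolation has to be made uniform in $\sigma$, and the small $\epsilon$-losses at the endpoints must be absorbed into the final $+\epsilon$. The deep inputs, the fourth moment and Bourgain's exponent $13/84$, are quoted rather than reproved.
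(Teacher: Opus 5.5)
Your proposal is correct. For the pointwise bound \eqref{eq:zetabound} it follows the paper's argument: Phragm\'en--Lindel\"of interpolation of Bourgain's exponent $\frac{13}{84}$ on $\sigma=\frac12$ against a trivial bound near $\sigma=1$. You interpolate to $\sigma=1+\epsilon$, where $\zeta\ll_\epsilon 1$; the paper interpolates to $\sigma=1$, using $\zeta(1+it)\ll|t|^{\delta}$. The $\epsilon$-bookkeeping is equivalent, and your exponent computation is right.

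For the fourth moment \eqref{eq:zeta12mom} your route genuinely differs from the paper's, and it is the sounder one. The paper claims, citing Hadamard's three-lines principle, the pointwise inequality $|\zeta(\frac12+\epsilon+it)|\ll|\zeta(\frac12+it)|^{1-2\epsilon}|\zeta(1+it)|^{2\epsilon}$ at each fixed $t$. It then applies H\"older to $\int_1^T|\zeta(\frac12+it)|^{4(1-2\epsilon)}\,dt$.

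The three-lines theorem controls suprema over whole vertical lines, not values at a common height. That pointwise inequality is in fact false: at the ordinate $\gamma$ of a zero on the critical line the right-hand side vanishes, while $\zeta(\frac12+\epsilon+i\gamma)\neq 0$ (e.g.\ for the first zero). The paper's second part is also written pointwise, but there it is only used through powers of $|t|$, where the Phragm\'en--Lindel\"of statement is legitimate.

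Your mean-value convexity theorem (Gabriel, or Hardy--Ingham--P\'olya applied to $\zeta^2$) is exactly the tool that legitimately moves a moment bound off the critical line. It costs a smoothing weight to pass from $\int_{-\infty}^{\infty}$ to $\int_1^T$, which you correctly flag as bookkeeping.

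Your fallback, the approximate functional equation for $\zeta^2$ plus Montgomery--Vaughan, is also sound and does not need the critical-line fourth moment at all. At $\sigma=\frac12+\epsilon$ the weights $d(n)^2n^{-1-2\epsilon}$ are summable, and the dual sum carries the factor $|\chi(s)|^2\asymp|t|^{-2\epsilon}$, so it is handled the same way. For fixed $\sigma>\frac12$ one even has the classical asymptotic $\int_1^T|\zeta(\sigma+it)|^4\,dt\sim \zeta(2\sigma)^4\zeta(4\sigma)^{-1}\,T$, which could simply be quoted.
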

\begin{proof}

By~\cite[p.~148]{Titch}, we have \begin{align} \label{eq:ZetaIntBound}
\int_{1}^{T}\left|\zeta\left(\frac{1}{2}+it\right)\right|^{4}dt\ll T^{1+\epsilon}
\end{align} for any $\epsilon > 0$. Also we have \begin{align} \label{eq:zetaLocalBound}
    |\zeta(1+it)| \ll_{\delta} |t|^{\delta},
\end{align}
for any $\delta > 0 $, and $|t| \geq 1$. Now, for a fixed $t \geq 1$ and by Hadamard's three-lines principle, we have \begin{align}
    |\zeta(\frac{1}{2}+\epsilon+it)| \ll |\zeta(\frac{1}{2}+it)|^{1-2\epsilon} |\zeta(1+it)|^{2\epsilon}.
\end{align}

If we take $\epsilon < \frac{1}{2}$, then $1-2\epsilon < 1$. Now using \eqref{eq:ZetaIntBound} and \eqref{eq:zetaLocalBound}, we have \begin{align}
    \int_{1}^{T}  |\zeta(\frac{1}{2}+\epsilon+it)|^4 &\ll T^{\epsilon_1}\int_{1}^{T}|\zeta(\frac{1}{2}+it)|^{4(1-2\epsilon)} \\
    & \ll T^{\epsilon_1 } \left(\int_{1}^{T}|\zeta(\frac{1}{2}+it)|^{4} \right)^{1-2\epsilon} \left( \int_{1}^{T} 1 dt \right)^{2\epsilon} \\
    & \ll T^{\epsilon_1} T^{(1+\epsilon)(1-2\epsilon)}T^{2\epsilon} \\
    & \ll T^{1+\epsilon}.
\end{align}

On the other hand, from~\cite{BourJe}, we have \begin{align} \label{eq:ZetaHalfBound}
    \zeta\left( \frac{1}{2}+it \right) \ll |t|^{\frac{13}{84}+\epsilon}.
\end{align}
For $\frac{1}{2} \leq \sigma \leq 1$, using Hadamard's three-lines principle, \eqref{eq:zetaLocalBound}, and \eqref{eq:ZetaHalfBound}, we have \begin{align}
    \zeta\left( \sigma +it \right) &\ll \zeta\left( \frac{1}{2}+it \right)^{\frac{1-\sigma}{1-\frac{1}{2}}}\zeta\left( 1+it \right)^{\frac{\sigma -\frac{1}{2}}{1-\frac{1}{2}}} \ll |t|^{\frac{13}{42}(1-\sigma)+\epsilon},
\end{align}
and for $1 < \sigma \leq 1+\epsilon $, we have $\zeta\left( \sigma +it \right) \ll_\epsilon 1$, which proves the second result.
\end{proof}

\begin{lemma}\label{lem:LSigmaChi}
    Let $\chi$ be any primitive character modulo $q$. Then for $q \ll T^2$,
    \begin{align}
        L(\sigma+iT,\chi) \ll (q(1+|T|))^{\mathrm{max}\{\frac{1}{3}(1-\sigma),0\}+\epsilon}
    \end{align}
    holds uniformly for $\frac{1}{2} \leq \sigma \leq 2$, and 
    \begin{align}
        \int_{1}^{T}|L(\frac{1}{2}+\epsilon+it,\chi)|^4 \ll_{q} T^{1+\epsilon}.
    \end{align}
\end{lemma}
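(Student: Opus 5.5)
The plan is to reduce both parts of the lemma to known estimates for primitive Dirichlet $L$-functions: the pointwise bound to a convexity-breaking subconvexity result, and the fourth moment to the large sieve together with the approximate functional equation.

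\textbf{Pointwise bound.} For $\sigma>1$, the Dirichlet series converges absolutely, so $L(\sigma+iT,\chi)\ll_\epsilon 1$ on $1+\epsilon\le\sigma\le 2$. The bound there follows trivially, except in the thin range $1\le\sigma\le 1+\epsilon$, where the standard estimate $L(1+it,\chi)\ll \log(q(|t|+2))$ suffices.

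On the critical line I would invoke a hybrid Weyl-strength subconvexity bound
\[
L\!\left(\tfrac12+iT,\chi\right)\ll \bigl(q(1+|T|)\bigr)^{\frac16+\epsilon},
\]
valid for $q\ll T^2$. This is the Heath-Brown hybrid bound, or the Petrow--Young Weyl bound in $q$ combined with the classical $t$-aspect bound.

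I would then apply the Phragm\'en--Lindel\"of principle, i.e.\ Hadamard three-lines, exactly as in the proof of Lemma~\ref{lem:zetabound}. Interpolating between $\sigma=\tfrac12$ (exponent $\tfrac16$) and $\sigma=1$ (exponent $\epsilon$) gives the exponent
\[
\tfrac16\cdot\frac{1-\sigma}{1/2}=\tfrac13(1-\sigma).
\]
Since $\chi$ is primitive and non-principal, $L(s,\chi)$ is entire of finite order, so no pole needs to be removed before applying three-lines.

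\textbf{Fourth moment.} I would first use the classical mean-value result
\[
\int_1^T\bigl|L(\tfrac12+it,\chi)\bigr|^4\,dt\ll_q T^{1+\epsilon}.
\]
This comes from the approximate functional equation writing $L^2$ as two Dirichlet polynomials of length about $\sqrt{q}\,T$, followed by the Montgomery--Vaughan mean value theorem. Alternatively one can cite the known fourth moment $\ll_q T\log^4 T$.

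Then I would transfer this to the line $\tfrac12+\epsilon$ verbatim as in Lemma~\ref{lem:zetabound}:
\begin{itemize}
\item three-lines gives $|L(\tfrac12+\epsilon+it,\chi)|\ll |L(\tfrac12+it,\chi)|^{1-2\epsilon}\,|L(1+it,\chi)|^{2\epsilon}$;
\item the factor $|L(1+it,\chi)|^{2\epsilon}$ is $\ll T^{\epsilon}$;
\item H\"older's inequality with exponents $\frac{1}{1-2\epsilon}$ and $\frac{1}{2\epsilon}$ finishes the bound.
\end{itemize}

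A cleaner alternative is to apply the mean value theorem directly on $\sigma=\tfrac12+\epsilon$ to the Dirichlet polynomial approximating $L^2$. This avoids interpolating moments.

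\textbf{Main obstacle.} The hard part is the hybrid subconvexity input on the critical line, which is uniform in $q$ and $T$ under $q\ll T^2$. This cannot be derived within the paper and must be cited. Everything else is a routine convexity interpolation.
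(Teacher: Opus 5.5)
Your plan is the paper's plan. You use a Weyl-strength bound on $\sigma=\tfrac12$ and an essentially trivial bound on $\sigma=1$, interpolated by Phragm\'en--Lindel\"of. You then transfer a critical-line fourth moment to $\sigma=\tfrac12+\epsilon$.

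On the pointwise estimate you are more careful than the paper. It quotes only the fixed-modulus bound $L(\tfrac12+it,\chi)\ll t^{1/6+\epsilon}$, and it asserts $L\ll 1$ on $1\le\sigma\le 2$, which near $\sigma=1$ holds only up to $(q|t|)^{\epsilon}$, as you observe. Its argument therefore gives the first estimate only with a $q$-dependent constant. That suffices for $\chi_4$, but it is not the stated $(q(1+|T|))$-uniform form. A genuinely hybrid input does give the statement, e.g.\ Petrow--Young's $L(\tfrac12+it,\chi)\ll (q(1+|t|))^{1/6+\epsilon}$. Cite that hybrid result directly: a $q$-aspect bound and a $t$-aspect bound whose constants depend on the other parameter cannot simply be combined.

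The step you copy ``verbatim'' from Lemma~\ref{lem:zetabound} for the fourth moment is not valid. Hadamard's three-lines theorem, or Phragm\'en--Lindel\"of, interpolates suprema or growth majorants along vertical lines; it does not compare values at the same ordinate. The pointwise inequality $|L(\tfrac12+\epsilon+it,\chi)|\ll |L(\tfrac12+it,\chi)|^{1-2\epsilon}|L(1+it,\chi)|^{2\epsilon}$ is false. It would force $L(\tfrac12+\epsilon+it_0,\chi)=0$ at every zero $\tfrac12+it_0$ on the critical line. But there are $\gg T$ such zeros up to height $T$, while zero-density estimates allow only $o(T)$ zeros on $\Re s=\tfrac12+\epsilon$. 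The paper's proof, here and in Lemma~\ref{lem:zetabound}, has exactly the same defect.

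Your ``cleaner alternative'' is the correct argument and should be the main one.
\begin{itemize}
\item On $\sigma=\tfrac12+\epsilon$, use the approximate functional equation to write $L(s,\chi)^2$ as two Dirichlet polynomials with coefficients $d(n)\chi(n)$ and $d(n)\bar\chi(n)$.
\item Their length is $\asymp q|t|$, not $\sqrt q\,T$, though this only affects the $q$-dependence. The dual polynomial carries a factor of size $(q|t|)^{-2\epsilon}$.
\item Localise $t$ dyadically and apply the Montgomery--Vaughan mean value theorem; this gives $\ll_q T^{1+\epsilon}$ directly.
\end{itemize}
If you prefer to start from the critical-line fourth moment, interpolate the \emph{integral means} rather than pointwise values. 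Gabriel's convexity theorem does this, applied to $L(s,\chi)$ times a Gaussian weight localising $t$ to $[T_1,2T_1]$.
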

\begin{proof}
    From~\cite{Heath}, for $|t| \geq 1$, we have \begin{align} \label{eq:LHalfbound}
        L(\frac{1}{2}+it, \chi) \ll t^{\frac{1}{6}+\epsilon}.
    \end{align} 
    Similarly, we have
    \begin{align} \label{eq:L1bound}
        L(1+it, \chi) \ll_\delta |t|^{\delta},
    \end{align}
    for $0 < \delta < 1$. 
    Using Hadamard's three-lines principle, \eqref{eq:LHalfbound}, and \eqref{eq:L1bound}, we have \begin{align}
        L(\sigma+it,\chi) &\ll L(\frac{1}{2}+it,\chi)^{2(1-\sigma)}L(1+it, \chi)^{2\sigma-1} 
        \ll_\delta|t|^{\frac{1}{3}(1-\sigma)+\delta}
    \end{align}
  for $\sigma\in [\frac{1}{2},1]$ and any $\delta>0$.
    For $1 \leq \sigma \leq 2$, we have $L(\sigma+it, \chi) \ll  1$. Thus the first result follows.

    Again, from~\cite{Berke2021}, we have \begin{align} \label{eq:LIntHalfBound}
        \int_{1}^{T}\left |L\left(\frac{1}{2}+it,\chi \right) \right|^4 \ll_{q,\epsilon} T^{1+\epsilon}.
    \end{align}
     We now use Hadamard's three-lines principle, \eqref{eq:L1bound}, and \eqref{eq:LIntHalfBound}. Proceeding as in the Lemma \ref{lem:zetabound}, we have \begin{align}
        L(\frac{1}{2}+\epsilon+it) &\ll L\left( \frac{1}{2}+it, \chi \right)^{1-2\epsilon}L \left( 1+it, \chi \right)^{2\epsilon}        \ll_\delta |t|^{\delta}  L\left( \frac{1}{2}+it, \chi \right)^{1-2\epsilon}.
    \end{align}
    Hence, we have \begin{align}
        \int_{1}^{T} |L(\frac{1}{2}+\epsilon+it, \chi)|^{4}dt &\ll T^{4 \delta}\int_{1}^{T}\left|L\left( \frac{1}{2}+it, \chi \right)\right|^{4(1-2\epsilon)}dt \\
        &\ll T^{4\delta} \left( \int_{1}^{T}\left|L\left( \frac{1}{2}+it, \chi \right)\right|^4 \right)^{1-2\epsilon}\left( \int_{1}^{T}1 dt \right)^{2\epsilon} \\
        &\ll T^{4\delta}T^{(1+\epsilon)(1-2\epsilon)+2\epsilon} \\
        & \ll T^{1+\epsilon},
    \end{align}
    from which the second result follows.
    \end{proof}

\begin{lemma} \label{lem:LSymFChi}
    Let $f \in H_k$, and $\chi$ be a primitive character modulo $q$. Then for $q \ll T^2$, we have 
    \begin{align} \label{eq:LSigmafbound}
        L(\sigma+iT,\mathrm{sym}^2f) \ll (1+|T|)^{\mathrm{max} \{\frac{6}{5}(1-\sigma),0\} + \epsilon},
    \end{align}
    and \begin{align}\label{eq:LSigmaSymChibound}
        L(\sigma+iT,\mathrm{sym}^2f \otimes\chi) \ll (q(1+|T|))^{\mathrm{max} \{\frac{67}{46}(1-\sigma),0\} + \epsilon}
    \end{align}
    uniformly for $\frac{1}{2} \leq \sigma \leq 2$ and $|T| \geq 1$. We also have
    \begin{align}\label{eq:intLSigmafChi}
        \int_{1}^{T}|L(\sigma+iT,\mathrm{sym}^2f \otimes\chi)|^4dt \ll (qT)^{6(1-\sigma)+\epsilon}
    \end{align}
    uniformly for $\frac{1}{2} \leq \sigma \leq 1+\epsilon$ and $T \geq 1$.
 \end{lemma}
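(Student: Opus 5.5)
The plan is to derive the three estimates of Lemma~\ref{lem:LSymFChi} from standard inputs about $L(s,\mathrm{sym}^2f)$ and its twists. We use that $\mathrm{sym}^2f$ is automorphic on $\mathrm{GL}_3$ (Gelbart--Jacquet), so $L(s,\mathrm{sym}^2f\otimes\chi)$ is entire for primitive $\chi$ modulo $q$. It satisfies a functional equation of degree $3$ with analytic conductor $\asymp (q(1+|t|))^{3}$, and the gamma-factor ratio obeys
\[
|\gamma(1-s)/\gamma(s)|\asymp (q(1+|t|))^{3(1/2-\sigma)}
\]
uniformly in $\tfrac12\le\sigma\le 2$.

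\textbf{Pointwise bounds.} For \eqref{eq:LSigmafbound} we take the known subconvex estimate on the critical line, $L(\tfrac12+iT,\mathrm{sym}^2f)\ll |T|^{3/5+\epsilon}$. For \eqref{eq:LSigmaSymChibound} we take the hybrid subconvex bound $L(\tfrac12+iT,\mathrm{sym}^2f\otimes\chi)\ll (q|T|)^{67/92+\epsilon}$, valid for $q\ll T^2$. We combine these with the trivial bound $\ll (q|T|)^\epsilon$ on $\sigma=1+\epsilon$ and apply the Phragm\'en--Lindel\"of principle to the entire function $L$ in the strip $\tfrac12\le\sigma\le1+\epsilon$; it has finite order, so the principle applies. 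Linear interpolation of the exponents gives $\frac{6}{5}(1-\sigma)$ and $\frac{67}{46}(1-\sigma)$ respectively, and for $1\le\sigma\le2$ the absolute convergence of the Euler product gives $\ll 1$. This part is routine and mirrors Lemmas~\ref{lem:zetabound} and~\ref{lem:LSigmaChi}.

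\textbf{Mean-value estimate.} We read \eqref{eq:intLSigmafChi} with integration variable $t$, i.e.\ as a bound for $\int_1^T|L(\sigma+it,\mathrm{sym}^2f\otimes\chi)|^4\,dt$, so the length of the interval must be accounted for. We apply an approximate functional equation to $L(s,\mathrm{sym}^2f\otimes\chi)^2$, which has degree $6$ and conductor $\asymp (qt)^6$, on dyadic ranges $t\in[T_0,2T_0]$. This writes $L^2$ as a smoothed Dirichlet polynomial $\sum_{n\le N}b(n)n^{-s}$ plus $\gamma$-ratio$^{2}$ times a dual polynomial $\sum_{n\le M}\overline{b}(n)n^{s-1}$, where $NM\asymp(qT_0)^6$ and $b(n)\ll n^\epsilon$. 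Squaring and integrating, the Montgomery--Vaughan mean value theorem gives
\[
\int_{T_0}^{2T_0}\Bigl|\sum_{n\le N}\frac{b(n)}{n^{\sigma+it}}\Bigr|^2dt\ll (T_0+N)\bigl(1+N^{2-2\sigma}\bigr)(qT_0)^\epsilon .
\]
The dual term is bounded analogously, with the extra factor $(qT_0)^{6(1-2\sigma)}$ and $N$, $\sigma$ replaced by $M$, $1-\sigma$. Taking the symmetric choice $N=M=(qT_0)^3$ and summing dyadically yields $(qT)^{6(1-\sigma)+\epsilon}$ in the main range. At $\sigma=\tfrac12$ this is the sharp-in-form bound $(qT)^{3+\epsilon}$.

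\textbf{Main obstacle.} The difficulty lies near the right end of the stated range. The diagonal contribution $T_0\cdot N^{\epsilon}$ from the first polynomial is always present, and indeed $\int_1^T|L(1+it)|^4\,dt\gg T$. So for $\tfrac56<\sigma\le1+\epsilon$ the argument only produces $T^{1+\epsilon}$, which exceeds $(qT)^{6(1-\sigma)+\epsilon}$ there. We therefore expect to prove, and use later, the bound
\[
\int_1^T|L(\sigma+it,\mathrm{sym}^2f\otimes\chi)|^4\,dt\ll (qT)^{\max\{6(1-\sigma),1\}+\epsilon},
\]
which coincides with the stated form for $\tfrac12\le\sigma\le\tfrac56$. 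Beyond that range we would first try an unbalanced choice $N\ne M$ to see whether any saving is possible, but we do not expect the literal exponent $6(1-\sigma)$ to survive for $\sigma$ near $1$. In the later applications, at $\sigma=\tfrac12+\epsilon$, only the $\tfrac12\le\sigma\le\tfrac56$ range is needed.
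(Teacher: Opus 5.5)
Your treatment of the two pointwise bounds matches the paper's. The paper quotes the critical-line bounds of Lin--Nunes--Qi ($|T|^{3/5+\epsilon}$) and Huang ($(q|T|)^{67/92+\epsilon}$) and interpolates by Phragm\'en--Lindel\"of against the trivial bound on $\sigma=1+\epsilon$. One small point: the hybrid input is only claimed for $q\ll t^2$, so the interpolation should be localised near height $T$ rather than run over the whole strip. For example, multiply by $e^{(s-iT)^2}$ and use convexity away from $T$; this is routine.

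For the fourth moment the routes genuinely differ. The paper's proof is just a citation to Perelli, which is short. You instead derive the bound from an approximate functional equation for $L^2$ and the mean value theorem for Dirichlet polynomials. That makes the argument self-contained, and it shows the estimate as literally stated is false near $\sigma=1$. At $\sigma=1+\epsilon$ the series $L(s)^2=\sum b(n)n^{-s}$ converges absolutely with $b(1)=1$, so its mean square over $[1,T]$ is $\gg T$. The stated right-hand side there is $(qT)^{-5\epsilon}$, so the citation cannot rescue the literal statement. Your corrected bound $(qT)^{\max\{6(1-\sigma),1\}+\epsilon}$ is the right statement. The paper only uses the estimate at $\sigma=\frac12+\epsilon$, where the maximum is $6(1-\sigma)$, so your version is all that is needed.

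One slip needs repairing. Your displayed mean-value bound $(T_0+N)(1+N^{2-2\sigma})$ has the wrong shape: with $N=(qT_0)^3$ at $\sigma=\frac12$ it gives $(qT_0)^6$, not $(qT_0)^3$. The Montgomery--Vaughan theorem gives $\sum_{n\le N}(T_0+O(n))|b(n)|^2n^{-2\sigma}\ll (T_0+N^{2-2\sigma})(qT_0)^{\epsilon}$ for the first polynomial. For the dual polynomial it gives $(qT_0)^{6(1-2\sigma)}\bigl(T_0M^{2\sigma-1}+M^{2\sigma}\bigr)(qT_0)^{\epsilon}$. With $N=M=(qT_0)^3$ and $\sigma\ge\frac12$, both are $\ll \bigl(T_0+(qT_0)^{6(1-\sigma)}\bigr)(qT_0)^{\epsilon}$. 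Summing dyadically then gives exactly your claimed maximum.
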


 \begin{proof}
     The estimates \eqref{eq:LSigmafbound} and \eqref{eq:LSigmaSymChibound} follow from the Phragm\'{en}-Lindel\"{o}f convexity principle and the works of Lin, Nunes, and Qi~\cite{Lin} and Huang~\cite{Huang}, respectively. The bound \eqref{eq:intLSigmafChi} follows from Perelli~\cite{Per}.
 \end{proof}

\begin{lemma}[{\cite{JiangLu}}]\label{lem:genLbound}
Let $\chi$ be a primitive character modulo $q$ and $\mathfrak{L}_{m,n}^{d}(s,\chi)$ be a general $L$-function of degree $2A$. For any $\epsilon>0$, we have
\begin{align} \label{eq:genL2mom}
\int_{T}^{2T}\left|\mathfrak{L}_{m,n}^{d}(\sigma+it,\chi)\right|^{2}dt\ll (qT)^{2A(1-\sigma)+\epsilon},
\end{align}
uniformly for $\frac{1}{2}\leq\sigma\leq 1+\epsilon$, and $T\geq 1$. Also,
\begin{align} \label{eq:genLbound}
\mathfrak{L}_{m,n}^{d}(\sigma+it,\chi)\ll (q(1+|t|))^{\max\{A(1-\sigma),0\}+\epsilon},
\end{align}
uniformly for $-\epsilon\leq\sigma\leq 1+\epsilon$.
\end{lemma}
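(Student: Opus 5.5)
The lemma is quoted from \cite{JiangLu}, but I would prove both parts from two standard inputs on $\mathfrak{L}^{d}_{m,n}(s,\chi)$: a functional equation with gamma factor of degree $2A$ and analytic conductor $\asymp (q(1+|t|))^{2A}$, and a Ramanujan-type bound $\ll n^{\epsilon}$ for its Dirichlet coefficients. The latter follows from Deligne's bounds \eqref{eq:deligne1}--\eqref{eq:deligne2}, exactly as in \eqref{eq:lambdaSymbound}. I would prove the pointwise bound \eqref{eq:genLbound} first and then the mean square \eqref{eq:genL2mom}.

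\textbf{Pointwise bound.} I would use the Phragm\'en--Lindel\"of convexity principle. On the line $\sigma = 1+\epsilon$, the Dirichlet series converges absolutely, so $\mathfrak{L}^{d}_{m,n}(1+\epsilon+it,\chi) \ll_\epsilon 1$. On the line $\sigma = -\epsilon$, the functional equation writes $\mathfrak{L}^{d}_{m,n}(s,\chi)$ as the value of the dual $L$-function at $1-s$ times a ratio of gamma factors. By Stirling's formula this ratio has size $(q(1+|t|))^{2A(\frac12-\sigma)}$, which gives $\mathfrak{L}^{d}_{m,n}(-\epsilon+it,\chi) \ll (q(1+|t|))^{A(1+2\epsilon)}$. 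The function has finite order in the strip, so Phragm\'en--Lindel\"of interpolates linearly in $\sigma$ between these two lines. This yields $(q(1+|t|))^{A(1-\sigma)+\epsilon}$ for $-\epsilon\le\sigma\le 1$ and $O(1)$ beyond, which is \eqref{eq:genLbound}. Any pole at $s=1$ from a $\zeta$-type factor is harmless after multiplying by $(s-1)$, since we only need $|t|\ge 1$ or a bounded neighbourhood of it.

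\textbf{Mean square.} For \eqref{eq:genL2mom} I would use the approximate functional equation. For $t\in[T,2T]$, write
\[
\mathfrak{L}^{d}_{m,n}(\sigma+it,\chi)=\sum_{k}\frac{a(k)}{k^{\sigma+it}}V\Bigl(\frac{k}{X}\Bigr)+\varepsilon(\sigma+it)\sum_{k}\frac{\overline{a(k)}}{k^{1-\sigma-it}}\widetilde V\Bigl(\frac{kX}{(qT)^{2A}}\Bigr),
\]
where $V,\widetilde V$ are smooth weights that decay rapidly beyond $1$, and $|\varepsilon(\sigma+it)|\asymp (qT)^{A(1-2\sigma)}$. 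I would choose the balanced length $X=(qT)^{A}$. I would then apply the Montgomery--Vaughan mean value theorem
\[
\int_T^{2T}\Bigl|\sum_k b_k k^{-it}\Bigr|^2dt \ll \sum_k (T+k)|b_k|^2 .
\]
With $|a(k)|\ll k^{\epsilon}$, the first sum contributes $\ll (T+X)X^{1-2\sigma+\epsilon}+T$. The second sum has effective length $(qT)^{2A}/X$, and together with the factor $|\varepsilon|^2$ it contributes the same order. Since $A\ge 1$, this totals $(qT)^{A(2-2\sigma)+\epsilon}$, which is exactly $(qT)^{2A(1-\sigma)+\epsilon}$.

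\textbf{Main obstacle.} The delicate step is making this estimate uniform over all $\sigma\in[\frac12,1+\epsilon]$, since the approximate functional equation introduces a $t$-dependent root factor. I would sidestep this as follows. First prove the two endpoint bounds: $(qT)^{A+\epsilon}$ at $\sigma=\frac12$, and $(qT)^{\epsilon}$ at $\sigma=1+\epsilon$, the latter trivially from absolute convergence. Then interpolate using Gabriel's convexity theorem for mean values. Apply it to $\mathfrak{L}^{d}_{m,n}$ multiplied by a smooth cutoff such as $e^{(s-i\tau)^2/T^2}$, which localises to $[T,2T]$ up to negligible tails. Because $\log$ of the mean square is then convex in $\sigma$, we obtain $(qT)^{2A(1-\sigma)+\epsilon}$ throughout the range.
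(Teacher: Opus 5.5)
Your pointwise bound \eqref{eq:genLbound} (Phragm\'en--Lindel\"of between $\sigma=-\epsilon$, via the functional equation and Stirling, and $\sigma=1+\epsilon$) is the standard convexity argument and is fine. The paper itself gives no proof and simply quotes Jiang--L\"u. The mean-square part, however, has a genuine problem. As printed, \eqref{eq:genL2mom} is false for $1-\frac{1}{2A}<\sigma\le 1+\epsilon$, and your argument reaches it only through two incorrect steps.

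\textbf{First step.} At $\sigma=1+\epsilon$, absolute convergence gives $\int_T^{2T}|\mathfrak{L}^{d}_{m,n}(1+\epsilon+it,\chi)|^2\,dt\ll T$, not $(qT)^{\epsilon}$. In fact the Montgomery--Vaughan theorem you invoke gives this integral as $T\sum_k|a(k)|^2k^{-2-2\epsilon}+O(1)\ge T+O(1)$, because $a(1)=1$. By contrast, the right-hand side of \eqref{eq:genL2mom} at $\sigma=1+\epsilon$ is $(qT)^{(1-2A)\epsilon}$, which tends to $0$.

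\textbf{Second step.} In your approximate functional equation computation, the diagonal contribution $T\sum_k|a(k)|^2k^{-2\sigma}$ is already $\ge T$ from $k=1$. It is not dominated by $(qT)^{2A(1-\sigma)}$ once $2A(1-\sigma)<1$. So the step ``since $A\ge 1$, this totals $(qT)^{A(2-2\sigma)+\epsilon}$'' is valid only for $\sigma\le 1-\frac{1}{2A}$.

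What your method actually proves, uniformly for $\frac12\le\sigma\le 1+\epsilon$, is
\[
\int_T^{2T}\bigl|\mathfrak{L}^{d}_{m,n}(\sigma+it,\chi)\bigr|^2\,dt\ll (qT)^{\epsilon}\bigl((qT)^{2A(1-\sigma)}+T\bigr).
\]
That is, the exponent should be $\max\{2A(1-\sigma),1\}+\epsilon$. This is the form in which this mean-value lemma (going back to Perelli) is normally stated, so the $\max$ was presumably lost in transcription.

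This does not affect the paper. There \eqref{eq:genL2mom} is applied only at $\sigma=\frac12+\epsilon$, to $L$-functions of degree $2A\ge 3$, where $2A(1-\sigma)>1$.

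Finally, the uniformity in $\sigma$ that you single out as the main obstacle is not really one. The approximate functional equation and the mean value theorem can be run directly at each $\sigma$ with uniform constants. That direct route is better than Gabriel interpolation: between the correct endpoints $(qT)^{A+\epsilon}$ at $\sigma=\frac12$ and $T^{1+\epsilon}$ at $\sigma=1+\epsilon$, interpolation only yields an exponent of about $2A(1-\sigma)+2\sigma-1$. When $A>1$ and $\frac12<\sigma<1$, this is strictly weaker than $\max\{2A(1-\sigma),1\}$.
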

\begin{lemma} \label{lem:int}
    Let $f:\mathbb{R} \mapsto \mathbb{R}$ be any function, and $T > 1$. Then \begin{align}
        \int_{1}^{T}\frac{|f(t)|}{t}dt \ll \log T \sup_{1 \leq T_1 \leq T} \frac{1}{T_1}\int_{T_1}^{2T_1}|f(t)|dt.
    \end{align}
\end{lemma}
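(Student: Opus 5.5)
Since $\frac{1}{t}\le \frac{1}{T_1}$ on any interval $[T_1,2T_1]$, I will prove the statement by a dyadic decomposition of $[1,T]$.

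Let $K=\lceil \log T/\log 2\rceil$, so $K\ll \log T$ for $T>1$. Set $T_k=2^{k}$ for $0\le k\le K-1$. The intervals $[T_k,2T_k]$ cover $[1,2^{K}]\supseteq[1,T]$, but the last one may stick out beyond $T$. To keep all left endpoints inside $[1,T]$ as the supremum requires, I will take the intervals $[2^k,2^{k+1}]$ for $2^{k+1}\le T$. The remaining piece $[2^{k_0},T]$, where $2^{k_0}\le T<2^{k_0+1}$, will be handled by the choice $T_1=2^{k_0}\le T$. Its doubled interval $[T_1,2T_1]$ contains $[2^{k_0},T]$. Since the integrand $|f(t)|$ is nonnegative, enlarging the range of integration only increases the integral.

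Then
\[
\int_{1}^{T}\frac{|f(t)|}{t}\,dt\le \sum_{k=0}^{k_0}\int_{2^k}^{\min(2^{k+1},T)}\frac{|f(t)|}{t}\,dt\le \sum_{k=0}^{k_0}\frac{1}{2^k}\int_{2^k}^{2^{k+1}}|f(t)|\,dt .
\]
Each summand is at most $\sup_{1\le T_1\le T}\frac{1}{T_1}\int_{T_1}^{2T_1}|f(t)|\,dt$, because $T_1=2^k\in[1,T]$. The number of summands is $k_0+1\le \log T/\log 2+1\ll \log T$, at least for $T\ge 2$. For $1<T<2$ there is only one summand, and $\log T$ may be small there. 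This is the only delicate point: the intended use is $T\to\infty$, so I will assume $T\ge 2$, or else read $\log T$ as $\log(T+2)$, as is implicit in the applications. With that, the bound $\ll \log T\cdot\sup(\cdots)$ follows.

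I will state the implicit assumption that $|f|$ is locally integrable, so that all the integrals make sense. If the supremum is infinite, there is nothing to prove. I do not expect any real obstacle beyond the bookkeeping at the endpoint $T$ and the small-$T$ convention for $\log T$.
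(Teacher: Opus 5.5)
Your proof is correct and is essentially the paper's argument: split $[1,T]$ dyadically at the points $2^k$ with $2^N\le T<2^{N+1}$, use $1/t\le 2^{-k}$ on $[2^k,2^{k+1}]$, bound each piece by the supremum because $2^k\le T$, and count $N+1\ll\log T$ pieces. Your caveat about $1<T<2$ is genuine, since the stated bound fails as $T\to1^{+}$ (take $f$ the indicator of $[1,T]$). The paper's step $(N+1)M\le 2NM$ silently assumes $N\ge1$, i.e.\ $T\ge2$, which is harmless because $T\ge10$ in every application.
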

\begin{proof}
    We note that there exists an $N$ such that $2^N \leq T < 2^{N+1}$. Now  \begin{align}
        \int_{1}^{T}\frac{|f(t)|}{t}dt &\leq \sum_{k=0}^{N}\int_{2^k}^{2^{k+1}}\frac{|f(t)|}{t}dt \leq \sum_{k=0}^{N} \frac{1}{2^k}\int_{2^k}^{2^{k+1}}|f(t)|dt.
    \end{align}
   Let
\begin{equation*}
    M = \sup_{1 \leq T_1 \leq T} \frac{1}{T_1}\int_{T_1}^{2T_1}|f(t)|\,dt,
\end{equation*}
so that
\begin{equation}
    \frac{1}{2^k}\int_{2^k}^{2^{k+1}}|f(t)|\,dt \leq M,
\end{equation}
and
\begin{equation}
    \int_{1}^{T}\frac{|f(t)|}{t}\,dt \leq \sum_{k=0}^{N}M = (N+1)M.
\end{equation}
Since $2^N \leq T < 2^{N+1}$, we have $N \ll \log T$. Thus 
    \begin{align}
        \int_{1}^{T}\frac{|f(t)|}{t}dt &\leq 2NM \ll M\log T 
         \ll \log T \sup_{1\leq T_1 \leq T}\frac{1}{T_1}\int_{T_1}^{2T_1}|f(t)|dt.
    \end{align}
\end{proof}

\begin{lemma}\label{dominating main term}
Let \(f: \mathbb{R} \to \mathbb{R}_{>0}\) be any function that satisfy
\[
f(x) = Dx^{A} + O(x^{B})
\]
as \(x \to \infty\), where \(A, B \in \mathbb{R}\), \( D>0\), and \(A > B\).  
Then there exists \(X_0 > 0\) such that for all \(x > X_0\),
\[
f(x) \ge \frac{D}{2} x^{A}.
\]
\end{lemma}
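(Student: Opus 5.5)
Since $f(x)=Dx^{A}+O(x^{B})$, by the definition of the $O$-symbol there are constants $C>0$ and $X_1>0$ such that
\[
|f(x)-Dx^{A}|\le Cx^{B}\qquad\text{for all } x>X_1 .
\]
For such $x$ this gives the one-sided bound $f(x)\ge Dx^{A}-Cx^{B}$, and the plan is to show that the error term $Cx^{B}$ eventually falls below $\frac{D}{2}x^{A}$.

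Dividing by $x^{A}>0$, it suffices to have $Cx^{B-A}\le \frac{D}{2}$. Since $A>B$, the exponent $B-A$ is negative, so $x^{B-A}\to 0$ as $x\to\infty$. Explicitly, $Cx^{B-A}\le D/2$ holds as soon as $x\ge (2C/D)^{1/(A-B)}$; this uses $D>0$ and $A-B>0$. We then take
\[
X_0:=\max\bigl\{X_1,\,(2C/D)^{1/(A-B)},\,1\bigr\}.
\]
For $x>X_0$ we obtain $f(x)\ge Dx^{A}-\frac{D}{2}x^{A}=\frac{D}{2}x^{A}$, which is the claim.

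There is no real obstacle here. The only care needed is to use just the one-sided inequality coming from the $O$-term, and to note that the explicit threshold requires $D>0$ and $A>B$. The positivity of $f$ is not actually needed in this argument.
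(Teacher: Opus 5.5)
Your proposal is correct and follows the paper's own proof: obtain $C, X_1$ from the $O$-bound, use the one-sided inequality $f(x)\ge Dx^{A}-Cx^{B}$, and absorb the error term via $x^{B-A}\to 0$. The only difference is that you write down the explicit threshold $(2C/D)^{1/(A-B)}$ where the paper simply chooses $X_0$ large enough, and you correctly observe that the positivity of $f$ is never used.
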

\begin{proof}
By hypothesis, there exist constants \(C > 0\) and \(X_1 > 0\) such that for all \(x > X_1\),
\[
|f(x) - Dx^{A}| \le C x^{B}.
\]
Hence,
\[
f(x) \ge Dx^{A} - C x^{B}.
\]
Factoring \(x^{A}\), we get
\[
f(x) \ge Dx^{A} \left( 1 - \frac{C}{D} x^{B-A} \right).
\]
Since \(B - A < 0\), we have \(x^{B-A} \to 0\) as \(x \to \infty\).  
We choose \(X_0 \ge X_1\) large enough such that
\[
\frac{C}{D} x^{B-A} \le \frac{1}{2} \quad \text{for all } x > X_0.
\]
Then for all \(x > X_0\),
\[
f(x) \ge Dx^{A} \left( 1 - \frac{1}{2} \right) = \frac{D}{2} x^{A}.
\]
\end{proof}

\section{Proof of Theorem \ref{thm:main(0)}} \label{sec:main0}
By Perron's formula and \eqref{lem:F0}, we have \begin{align}
    \sum_{n\leq x}\lambda_{\mathrm{sym}^{j}f}(n) & = \int_{1+\epsilon-iT}^{1+\epsilon+iT}F_{j}^{(0)}(s)\frac{x^s}{s}ds + O\left( \frac{x^{1+\epsilon}}{T}\right).
\end{align}
We move the line of integration to $\Re(s) = \frac{1}{2}+\epsilon$ and by the Cauchy residue theorem, we get that there exists no pole in the area of integration due to the Lemma \ref{lem:F0}.
\begin{align} \label{eq:sum_l0}
    \sum_{n\leq x}\lambda_{\mathrm{sym}^{j}f}(n) & = \frac{1}{2\pi i}\left\{\int_{\frac{1}{2}+\epsilon-iT}^{\frac{1}{2}+\epsilon+iT}+\int_{1+\epsilon-iT}^{\frac{1}{2}+\epsilon-iT}+\int_{\frac{1}{2}+\epsilon+iT}^{1+\epsilon+iT}\right\}F_{j}^{(0)}(s)\frac{x^{s}}{s}ds + O\left(\frac{x^{1+\epsilon}}{T}\right) \\
    &= \frac{1}{2\pi i}(J_{1}+J_{2}+J_{3})+O\left(\frac{x^{1+\epsilon}}{T}\right), \quad \text{(say)} .
\end{align}
Contribution of horizontal line integrals ($J_{2}$ and $J_{3}$) in absolute value (using Lemmas \ref{lem:F0} and \ref{lem:genLbound}) is given by
\begin{align*}
|J_2+J_3| &= \left| \left( \int_{1+\epsilon-iT}^{\frac{1}{2}+\epsilon-iT}+\int_{\frac{1}{2}+\epsilon+iT}^{1+\epsilon+iT} \right)F_j^{(0)}(s)\frac{x^s}{s}\right|\ll  \left( \int_{\frac{1}{2}+\epsilon}^{1+\epsilon} + \int_{\frac{1}{2}+\epsilon}^{1+\epsilon}\right) \frac{|L(\sigma+iT, \mathrm{sym}^j f)|}{T}x^\sigma d\sigma .
\end{align*}
Thus, 
\begin{align*}
    J_2+J_3 &\ll \int_{\frac{1}{2}+\epsilon}^{1+\epsilon} \frac{|L(\sigma+iT , \mathrm{sym}^j f)|}{T}x^{\sigma}d\sigma \\
&\ll \int_{\frac{1}{2}+\epsilon}^{1+\epsilon} \frac{|T|^{\frac{j+1}{2}(1-\sigma)+\epsilon}}{T}x^{\sigma}d\sigma \quad \text{ (by \ref{lem:genLbound})} \\
&\ll \frac{1}{T} \max_{\frac{1}{2}+\epsilon < \sigma < 1+\epsilon} \left( x^\sigma T^{\frac{j+1}{2}(1-\sigma)+\epsilon}\right) .
\end{align*}
Clearly, $x^\sigma T^{\frac{j+1}{2}(1-\sigma)+\epsilon}$ is a monotonic function, so the maximum occurs at the end points of the interval. We take values at both extreme points of the interval $[\frac{1}{2}+\epsilon, 1+\epsilon]$. So 
\begin{align*}
    J_2+J_3 &\ll \frac{1}{T}\left( x^{\frac{1}{2}+\epsilon}T^{\frac{j+1}{2} (1-\frac{1}{2}-\epsilon) +\epsilon} \right) +  \frac{1}{T}\left( x^{1+\epsilon}T^{\frac{j+1}{2} (1-1-\epsilon) +\epsilon}
    \right) \\
    &\ll \frac{x^{1+\epsilon}}{T}+x^{\frac{1}{2}+\epsilon}T^{\frac{j+1}{4}-1+\epsilon}.
\end{align*}

\begin{align*}
    J_1 &= \int_{\frac{1}{2}+\epsilon - iT}^{\frac{1}{2}+\epsilon+iT}F_j^{(0)}(s) \frac{x^{\frac{1}{2}+\epsilon+it}}{\frac{1}{2}+\epsilon+it}ds \\
    &= x^{\frac{1}{2}+\epsilon}\left( \int_{0\leq |t|\leq 1} + \int_{1 \leq |t| \leq T} \right)F_j^{(0)}\left(\frac{1}{2}+\epsilon+it\right) \frac{x^{it}}{\frac{1}{2}+\epsilon+it}idt\\
    &= I_1+I_2.
\end{align*}

Now \begin{align*}
    I_2 &\ll x^{\frac{1}{2}+\epsilon} \int_{1}^{T}|L(\frac{1}{2}+\epsilon+it, \mathrm{sym}^j f)| \frac{1}{t} dt \quad ( \text{ by Lemma } \ref{lem:F0})\\
    &\ll x^{\frac{1}{2}+\epsilon} \log T\text{ }\sup_{1 \leq T_1 \leq T}\frac{1}{T_1}\int_{T_1}^{2T_1}|L(\frac{1}{2}+\epsilon+it, \mathrm{sym}^j f)| dt ( \text{ by Lemma } \ref{lem:int}  \\
    & \ll x^{\frac{1}{2}+\epsilon}\log T\text{ }\sup_{1 \leq T_1 \leq T}\frac{1}{T_1} \left( \int_{T_1}^{2T_1}|L(\frac{1}{2}+\epsilon+it, \mathrm{sym}^j f)|^2 dt \right)^{\frac{1}{2}}\left( \int_{T_1}^{2T_1} 1 dt \right)^{\frac{1}{2}} \\
    & \ll x^{\frac{1}{2}+\epsilon} \sup_{1 \leq T_1 \leq T}\frac{1}{T_1}(T_1^{\max\{(j+1)(1-\frac{1}{2}-\epsilon), 0\} + \epsilon})^{\frac{1}{2}}T_1^\frac{1}{2} \quad \text{(using \ref{lem:genLbound})} \\
    & \ll x^{\frac{1}{2}+\epsilon}T^{\frac{j+1}{4}-\frac{1}{2}+\epsilon}.
\end{align*}

The first integral gives \begin{align*}
    I_1 &= x^{\frac{1}{2}+\epsilon}\int_{0 \leq |t| \leq 1}F_j^{(0)}\left(\frac{1}{2}+\epsilon+it\right) \frac{x^{it}}{\frac{1}{2}+\epsilon+it}dt.
    \end{align*}
    The above integration is finite. If not, then \eqref{eq:sum_l0} would be infinite. As the other integral is finite, this is a contradiction. So,
    \begin{align*}
        I_1 \ll x^{\frac{1}{2}+\epsilon}.
    \end{align*}
Combining $I_1$ and $I_2$, we have \begin{align}
    J_1 \ll x^{\frac{1}{2}+\epsilon} + x^{\frac{1}{2}+\epsilon} T^{\frac{j+1}{4}-\frac{1}{2}+\epsilon}.
\end{align}

Thus we have \begin{align}
    J_1+J_2+J_3 \ll \frac{x^{1+\epsilon}}{T}+x^{\frac{1}{2}+\epsilon} T^{\frac{j+1}{4}-\frac{1}{2}+\epsilon}.
\end{align}

Now put $T = x^{\frac{2}{j+3}}$, then we have \begin{align}
    \sum_{\substack{n\leq x}}\lambda_{\mathrm{sym}^jf}(n) &= O\left( x^{1-\frac{2}{j+3}+\epsilon} \right).
\end{align}

\section{Proof of Theorem \ref{thm:main}} \label{sec:main}

We begin by applying Perron's formula to $F_{j}^{(1)}(s)$ with $\eta=1+\epsilon$, and $10\leq T\leq x$. Thus we have,
\begin{align*}
\sum_{n\leq x}\lambda_{\mathrm{sym}^{j}f}(n)l_1(n)
&= \frac{1}{2\pi i}\int_{\eta-iT}^{\eta+iT}F_{j}^{(1)}(s)\frac{x^{s}}{s}ds + O\left(\frac{x^{1+\epsilon}}{T}\right).
\end{align*}

After moving the line of integration to $\Re(s)=\frac{1}{2}+\epsilon$, by Cauchy's residue theorem, there are no poles due to the Lemma \ref{lem:F1}.
So we obtain,

\begin{align} \label{eq:LambdaEq}
    \sum_{n\leq x}\lambda_{\mathrm{sym}^{j}f}(n)l_1(n) &= \frac{1}{2\pi i}\left\{\int_{\frac{1}{2}+\epsilon-iT}^{\frac{1}{2}+\epsilon+iT}+\int_{1+\epsilon-iT}^{\frac{1}{2}+\epsilon-iT}+\int_{\frac{1}{2}+\epsilon+iT}^{1+\epsilon+iT}\right\}F_{j}^{(1)}(s)\frac{x^{s}}{s}ds \\
    &\quad + O\left(\frac{x^{1+\epsilon}}{T}\right) \\
    &= \frac{1}{2\pi i}(J_{1}+J_{2}+J_{3})+O\left(\frac{x^{1+\epsilon}}{T}\right), \quad \text{(say)}.
\end{align}

Contribution of horizontal line integrals ($J_{2}$ and $J_{3}$) in absolute value (using Lemmas \ref{lem:F1} and \ref{lem:genLbound}) is

\begin{align}
    |J_2+J_3| &= \left| \left( \int_{1+\epsilon-iT}^{\frac{1}{2}+\epsilon-iT}+\int_{\frac{1}{2}+\epsilon+iT}^{1+\epsilon+iT} \right)F_j^{(1)}(s)\frac{x^s}{s}\right| \\
    &\ll  \left( \int_{\frac{1}{2}+\epsilon}^{1+\epsilon} + \int_{\frac{1}{2}+\epsilon}^{1+\epsilon}\right) \frac{|L(\sigma+iT, \mathrm{sym}^j f ) L(\sigma+iT, \mathrm{sym}^j f \otimes \chi_4)|}{T}x^\sigma d\sigma ,
\end{align}

\begin{align*}
    J_2+J_3 &\ll \int_{\frac{1}{2}+\epsilon}^{1+\epsilon} \frac{|T|^{\frac{j+1}{2}(1-\sigma)+\epsilon}|T|^{\frac{j+1}{2}(1-\sigma)+\epsilon}}{T}x^{\sigma}d\sigma \quad \text{ (using \ref{lem:genLbound})} \\
    &\ll \frac{1}{T} \max_{\frac{1}{2}+\epsilon < \sigma < 1+\epsilon} \left( x^\sigma T^{(j+1)(1-\sigma)+\epsilon}\right) .
\end{align*}

Clearly, $x^\sigma T^{(j+1)(1-\sigma)+\epsilon}$ is a monotonic function, so the maximum occurs at the end points of the interval. We take values at both extreme points of the interval $[\frac{1}{2}+\epsilon, 1+\epsilon]$. So 
\begin{align*}
    J_2+J_3 &\ll \frac{x}{T}\left( x^{\frac{1}{2}+\epsilon}T^{(j+1) (1-\frac{1}{2}-\epsilon) +\epsilon} \right) +  \frac{x}{T}\left( x^{1+\epsilon}T^{(j+1) (1-1-\epsilon) +\epsilon}
    \right) \\
    &\ll \frac{x^{2+\epsilon}}{T}+x^{\frac{3}{2}+\epsilon}T^{\frac{j+1}{2}-1+\epsilon}.
\end{align*}

Now contribution of vertical line integral $J_1$ in absolute value is 
\begin{align*}
    J_1 &= \int_{\frac{1}{2}+\epsilon - iT}^{\frac{1}{2}+\epsilon+iT}F_j^1(s) \frac{x^{\frac{1}{2}+\epsilon+it}}{\frac{1}{2}+\epsilon+it}ds \\
    &= x^{\frac{1}{2}+\epsilon}\left( \int_{0\leq |t|\leq 1} + \int_{1 \leq |t| \leq T} \right)F_j^{(1)}\left(\frac{1}{2}+\epsilon+it\right) \frac{x^{it}}{\frac{1}{2}+\epsilon+it}idt\\
    &= I_1+I_2.
\end{align*}

Now using the Lemma \ref{lem:genLbound}, we have
\begin{align*}
    I_2 &\ll x^{\frac{1}{2}+\epsilon} \int_{1}^{T}|L(\frac{1}{2}+\epsilon+it, \mathrm{sym}^j f) L(\frac{1}{2}+\epsilon+it, \mathrm{sym}^j f \otimes \chi_4)| \frac{1}{t} dt \\
    &\ll x^{\frac{1}{2}+\epsilon} \log T\text{ }\sup_{1 \leq T_1 \leq T}\frac{1}{T_1}\int_{T_1}^{2T_1}|L(\frac{1}{2}+\epsilon+it, \mathrm{sym}^j f \otimes ) L(\frac{1}{2}+\epsilon+it, \mathrm{sym}^j f \otimes \chi_4)| dt \\
    & \ll x^{\frac{1}{2}+\epsilon}\log T\text{ }\sup_{1 \leq T_1 \leq T}\frac{1}{T_1} \left( \int_{T_1}^{2T_1}|L(\frac{1}{2}+\epsilon+it, \mathrm{sym}^j f)|^2 dt \right)^{\frac{1}{2}} \\
    & \times\left( \int_{T_1}^{2T_1} |L(\frac{1}{2}+\epsilon+it, \mathrm{sym}^j f\otimes \chi_4)|^2 dt \right)^{\frac{1}{2}} \\
    & \ll x^{\frac{1}{2}+\epsilon} \sup_{1 \leq T_1 \leq T}\frac{1}{T_1}\left(T_1^{\max\{(j+1)(1-\frac{1}{2}-\epsilon), 0\} + \epsilon})^{\frac{1}{2}}\right)\left((T_1)^{\max\{(j+1)(1-\frac{1}{2}-\epsilon), 0\} + \epsilon})^{\frac{1}{2}}\right) \quad \text{(using \ref{lem:genLbound})} \\
    & \ll x^{\frac{1}{2}+\epsilon}T^{\frac{j+1}{2}-1+\epsilon}.
\end{align*}

The first integral gives \begin{align*}
    I_1 &= x^{\frac{1}{2}+\epsilon}\int_{0 \leq |t| \leq 1}F_j^{(1)}\left(\frac{1}{2}+\epsilon+it\right) \frac{x^{it}}{\frac{1}{2}+\epsilon+it}dt
    \end{align*}
    The above integration is finite. If not, then the left-hand side of \eqref{eq:LambdaEq} would be infinite. As the other integral is finite, this is a contradiction. So,
    \begin{align*}
        I_1 \ll x^{\frac{1}{2}+\epsilon}.
    \end{align*}
    
    Combining $I_1$ and $I_2$, we have \begin{align}
    J_1 \ll x^{\frac{1}{2}+\epsilon} + x^{\frac{1}{2}+\epsilon} T^{\frac{j+1}{2}-1+\epsilon}.
    \end{align}
    
    Thus we have \begin{align}
    J_1+J_2+J_3 \ll \frac{x^{1+\epsilon}}{T}+x^{\frac{1}{2}+\epsilon} T^{\frac{j+1}{2}-1+\epsilon}.
    \end{align}

So $\sum_{n\leq x}\lambda_{\mathrm{sym}^{j}f}(n)l_1(n) = O \left(\frac{x^{1+\epsilon}}{T}+x^{\frac{1}{2}+\epsilon} T^{\frac{j+1}{2}-1+\epsilon} \right)$.

Taking $T = x^{\frac{1}{j+1}}$, we get
\begin{align}
        \sum_{\substack{n\leq x }}\lambda_{\mathrm{sym}^jf}(n)l_1(n) = O \left(x^{1-\frac{1}{j+1}+\epsilon} \right),
    \end{align}
which together with \eqref{eq:r_2&l_1} completes the proof.

\section{Proof of Theorem \ref{thm:main(468)}} \label{sec:main(468)}

We first consider the sum of $4$ squares. Now By Perron's formula, we have \begin{align}
    \sum_{n\leq x}\lambda_{\mathrm{sym}^{j}f}(n)l_2(n) & = \int_{2+\epsilon-iT}^{2+\epsilon+iT}F_{j}^{(2)}(s)\frac{x^s}{s}ds + O\left( \frac{x^{2+\epsilon}}{T}\right).
\end{align}

We move the line of integration to $\Re(s) = \frac{3}{2}+\epsilon$ and by the Cauchy residue theorem, we get that there exists no pole in the area of integration due to the Lemma \ref{lem:F2}.

\begin{align} \label{eq:sum_l2}
    \sum_{n\leq x}\lambda_{\mathrm{sym}^{j}f}(n)l_2(n) & = \frac{1}{2\pi i}\left\{\int_{\frac{3}{2}+\epsilon-iT}^{\frac{3}{2}+\epsilon+iT}+\int_{2+\epsilon-iT}^{\frac{3}{2}+\epsilon-iT}+\int_{\frac{3}{2}+\epsilon+iT}^{2+\epsilon+iT}\right\}F_{j}^{(2)}(s)\frac{x^{s}}{s}ds \\
    &\quad + O\left(\frac{x^{2+\epsilon}}{T}\right) \\
    &= \frac{1}{2\pi i}(J_{1}+J_{2}+J_{3})+O\left(\frac{x^{2+\epsilon}}{T}\right), \quad \text{(say)}. 
\end{align}
Contribution of horizontal line integrals ($J_{2}$ and $J_{3}$) in absolute value (using Lemmas \ref{lem:F2} and \ref{lem:genLbound}) is
\begin{align*}
|J_2+J_3| &= \left| \left( \int_{2+\epsilon-iT}^{\frac{3}{2}+\epsilon-iT}+\int_{\frac{3}{2}+\epsilon+iT}^{2+\epsilon+iT} \right)F_j^{(2)}(s)\frac{x^s}{s}\right| \\
&\ll  \left( \int_{\frac{3}{2}+\epsilon}^{2+\epsilon} + \int_{\frac{3}{2}+\epsilon}^{2+\epsilon}\right) \frac{|L(\sigma+iT-1, \mathrm{sym}^j f)|}{T}x^\sigma d\sigma .
\end{align*}
The above inequality happens because $F_j^{(2)}(s)=G_j^{(2)}(s)H_j^{(2)}(s)$ and $H_j^{(2)}(s) \ll 1$ for $\Re(s) > \frac{3}{2}.$ So $F_j^{(2)}(s) \ll G_j^{(2)}(s) = L(s-1, \mathrm{sym}^j)L(s, \mathrm{sym}^j f )$. Now $L(s, \mathrm{sym}^j f)$ is absolutely convergent for $\Re(s) > 1$. So $L(s, \mathrm{sym}^j f) \ll 1$ in $\Re(s) > \frac{3}{2}$.
\begin{align*}
    J_2+J_3 &\ll \int_{\frac{3}{2}+\epsilon}^{2+\epsilon}\frac{|L(\sigma-1+iT , \mathrm{sym}^j f)|}{T} 
x^\sigma d\sigma \\
&\ll \int_{\frac{1}{2}+\epsilon}^{1+\epsilon} \frac{|L(\sigma+iT , \mathrm{sym}^j f)|}{T}x^{\sigma+1}d\sigma \\
&\ll \int_{\frac{1}{2}+\epsilon}^{1+\epsilon} \frac{|T|^{\frac{j+1}{2}(1-\sigma)+\epsilon}}{T}x^{\sigma+1}d\sigma \quad \text{ (using \ref{lem:genLbound})} \\
&\ll \frac{x}{T} \max_{\frac{1}{2}+\epsilon < \sigma < 1+\epsilon} \left( x^\sigma T^{\frac{j+1}{2}(1-\sigma)+\epsilon}\right) .
\end{align*}
Clearly, $x^\sigma T^{\frac{j+1}{2}(1-\sigma)+\epsilon}$ is a monotonic function, so the maximum occurs at the end points of the interval. We take values at both extreme points of the interval $[\frac{1}{2}+\epsilon, 1+\epsilon]$. So 
\begin{align*}
    J_2+J_3 &\ll \frac{x}{T}\left( x^{\frac{1}{2}+\epsilon}T^{\frac{j+1}{2} (1-\frac{1}{2}-\epsilon) +\epsilon} \right) +  \frac{x}{T}\left( x^{1+\epsilon}T^{\frac{j+1}{2} (1-1-\epsilon) +\epsilon}
    \right) \\
    &\ll \frac{x^{2+\epsilon}}{T}+x^{\frac{3}{2}+\epsilon}T^{\frac{j+1}{4}-1+\epsilon},
\end{align*}
and
\begin{align*}
    J_1 &= \int_{\frac{3}{2}+\epsilon - iT}^{\frac{3}{2}+\epsilon+iT}F_j^{(2)}(s) \frac{x^{\frac{3}{2}+\epsilon+it}}{\frac{3}{2}+\epsilon+it}ds \\
    &= x^{\frac{3}{2}+\epsilon}\left( \int_{0\leq |t|\leq 1} + \int_{1 \leq |t| \leq T} \right)F_j^{(2)}\left(\frac{3}{2}+\epsilon+it\right) \frac{x^{it}}{\frac{3}{2}+\epsilon+it}idt\\
    &= I_1+I_2.
\end{align*}

Now \begin{align*}
    I_2 &\ll x^{\frac{3}{2}+\epsilon} \int_{1}^{T}|L(\frac{1}{2}+\epsilon+it, \mathrm{sym}^j f)| \frac{1}{t} dt \quad ( \text{ by Lemma } \ref{lem:F2})\\
    &\ll x^{\frac{3}{2}+\epsilon} \log T\text{ }\sup_{1 \leq T_1 \leq T}\frac{1}{T_1}\int_{T_1}^{2T_1}|L(\frac{1}{2}+\epsilon+it, \mathrm{sym}^j f)| dt ( \text{ by Lemma } \ref{lem:int}  \\
    & \ll x^{\frac{3}{2}+\epsilon}\log T\text{ }\sup_{1 \leq T_1 \leq T}\frac{1}{T_1} \left( \int_{T_1}^{2T_1}|L(\frac{1}{2}+\epsilon+it, \mathrm{sym}^j f)|^2 dt \right)^{\frac{1}{2}}\left( \int_{T_1}^{2T_1} 1 dt \right)^{\frac{1}{2}} \\
    & \ll x^{\frac{3}{2}+\epsilon} \sup_{1 \leq T_1 \leq T}\frac{1}{T_1}(T_1^{\max\{(j+1)(1-\frac{1}{2}-\epsilon), 0\} + \epsilon})^{\frac{1}{2}}T_1^\frac{1}{2} \quad \text{(using \ref{lem:genLbound})} \\
    & \ll x^{\frac{3}{2}+\epsilon}T^{\frac{j+1}{4}-\frac{1}{2}+\epsilon}.
\end{align*}

The first integral gives \begin{align*}
    I_1 &= x^{\frac{3}{2}+\epsilon}\int_{0 \leq |t| \leq 1}F_j^{(2)}\left(\frac{3}{2}+\epsilon+it\right) \frac{x^{it}}{\frac{3}{2}+\epsilon+it}dt
    \end{align*}
    The above integration is finite. If not, then \eqref{eq:sum_l2} would be infinite. As the other integral is finite, this is a contradiction. So,
    \begin{align*}
        I_1 \ll x^{\frac{3}{2}+\epsilon}.
    \end{align*}
Combining $I_1$ and $I_2$, we have \begin{align}
    J_1 \ll x^{\frac{3}{2}+\epsilon} + x^{\frac{3}{2}+\epsilon} T^{\frac{j+1}{4}-\frac{1}{2}+\epsilon}.
\end{align}

Thus we have \begin{align}
    J_1+J_2+J_3 \ll \frac{x^{2+\epsilon}}{T}+x^{\frac{3}{2}+\epsilon} T^{\frac{j+1}{4}-\frac{1}{2}+\epsilon}.
\end{align}

Now put $T = x^{\frac{2}{j+3}}$, then using \eqref{eq:r_4&l_2} we have \begin{align}
    \sum_{\substack{n\leq x}}\lambda_{\mathrm{sym}^jf}(n)r_4(n) &= O\left( x^{2-\frac{2}{j+3}+\epsilon} \right).
\end{align}

For the sum of $6$ squares, proceeding similarly, we have \begin{align}
    \sum_{n\leq x}\lambda_{\mathrm{sym}^{j}f }(n)l_3(n) = \frac{x^{3+\epsilon}}{T}+x^{\frac{5}{2}+\epsilon} T^{\frac{j+1}{4}-\frac{1}{2}+\epsilon},
\end{align}
and also 
\begin{align}
    \sum_{n\leq x}\lambda_{\mathrm{sym}^{j}f }(n)v_3(n) = \frac{x^{3+\epsilon}}{T}+x^{\frac{5}{2}+\epsilon} T^{\frac{j+1}{4}-\frac{1}{2}+\epsilon},
\end{align}
In both equation, if we put $T = x^{\frac{2}{j+3}}$, then using \eqref{eq:r_6&l_3&v_3}, we have \begin{align}
    \sum_{\substack{n\leq x}}\lambda_{\mathrm{sym}^jf}(n)r_6(n) &= O\left( x^{3-\frac{2}{j+3}+\epsilon} \right).
\end{align}

In the case of the sum of $8$ squares, we will again get the same result as in previous cases. Thus, we will have in general \begin{align}
    \sum_{\substack{n\leq x}}\lambda_{\mathrm{sym}^jf}(n)r_m(n) &= O\left( x^{\frac{m}{2}-\frac{2}{j+3}+\epsilon} \right),
\end{align}
for $m=4,6,8$.

\section{Proof of Theorem \ref{thm:main1(0)}} \label{sec:main1(0)}

We first calculate $\sum_{n\leq x}\lambda_{\mathrm{sym}^{j}f}^2(n)$. We begin by applying Perron's formula to $F_{j}^{*(0)}(s)$ with $\eta=1+\epsilon$, and $10\leq T\leq x$. Thus we have,
\begin{align*}
\sum_{n\leq x}\lambda_{\mathrm{sym}^{j}f}^2(n)
&= \frac{1}{2\pi i}\int_{\eta-iT}^{\eta+iT}F_{j}^{*(0)}(s)\frac{x^{s}}{s}ds + O\left(\frac{x^{1+\epsilon}}{T}\right).
\end{align*}

We move the line of integration to $\Re(s)=\frac{1}{2}+\epsilon$, and by Cauchy's residue theorem, there is only one simple pole at $s=1$ due to the factor $\zeta(s)$, we get from $F^{*(0)}_{j}(s)$ in the Lemma \ref{lem:F0*}. Therefore, we have

\begin{align}
    \sum_{n\leq x}\lambda_{\mathrm{sym}^{j}f}^2(n)l_2(n) & = a_{j,f,0}(1)x+\frac{1}{2\pi i}\left\{\int_{\frac{1}{2}+\epsilon-iT}^{\frac{1}{2}+\epsilon+iT}+\int_{1+\epsilon-iT}^{\frac{1}{2}+\epsilon-iT}+\int_{\frac{1}{2}+\epsilon+iT}^{1+\epsilon+iT}\right\}F_{j}^{*(0)}(s)\frac{x^{s}}{s}ds \\
    &\quad + O\left(\frac{x^{1+\epsilon}}{T}\right) \\
    &= a_{j,f,0}(1)x^+\frac{1}{2\pi i}(J_{1}+J_{2}+J_{3})+O\left(\frac{x^{2+\epsilon}}{T}\right) \quad \text{(say)} ,
\end{align}

where \begin{align} \label{ajf0}
    a_{j,f,0}(1) &= \frac{1}{2}\prod_{n=1}^{j}L(1,\mathrm{sym}^{2n}f)H_j^{(2)}(1).
\end{align}

Now using the lemmas \ref{lem:F0*}, \ref{lem:zetabound}, \ref{lem:LSymFChi}, \ref{lem:genLbound}, we have

\begin{align}
    J_2+J_3 &\ll \left| \left( \int_{1+\epsilon-iT}^{\frac{1}{2}+\epsilon-iT}+\int_{\frac{1}{2}+\epsilon+iT}^{1+\epsilon+iT} \right)F_j^{*(0)}(s)\frac{x^s}{s}\right| \\
    &= \int_{\frac{1}{2}+\epsilon}^{1+\epsilon} \left|\zeta(\sigma+iT)\prod_{n=1}^{j}L(\sigma +iT, \mathrm{sym}^{2n} f)\right|\frac{x^{\sigma}}{T}d\sigma \\
    &\ll \frac{1}{T}\max_{\frac{1}{2}+\epsilon \leq \sigma \leq 1+\epsilon} x^\sigma T^{(\frac{13}{42}+\frac{6}{5}+\sum_{2\leq n\leq j}\frac{2n+1}{2})(1-\sigma)+\epsilon}\\
    &\ll \frac{1}{T}\max_{\frac{1}{2}+\epsilon \leq \sigma \leq 1+\epsilon}x^\sigma T^{(\frac{(j+1)^2}{2}-\frac{103}{210})(1-\sigma)+\epsilon}.
\end{align}

The above function involving $\sigma$ is monotonic, so the maximum happens at the extreme points. We treat both boundary points as upper bounds.

\begin{align*}
    J_2+J_3 &\ll \frac{1}{T}\left[ x^{(1+\epsilon)} T^{(\frac{(j+1)^2}{2}-\frac{103}{210})(\epsilon)+\epsilon} +x^{(\frac{1}{2}+\epsilon)} T^{(\frac{(j+1)^2}{2}-\frac{103}{210})(\frac{1}{2}-\epsilon)+\epsilon}\right] \\
    &\ll \frac{x^{1+\epsilon}}{T} + x^{\frac{1}{2}+\epsilon}T^{\frac{(j+1)^2}{4}-\frac{523}{420}+\epsilon} .
\end{align*}

Contribution of the left vertical line integral ($J_{1}$) in absolute value (using Lemmas \ref{lem:F0*}, \ref{lem:LSigmaChi}, \ref{lem:zetabound}, \ref{lem:LSymFChi}, \ref{lem:genLbound} and H\"older's inequality) is
\begin{align*}
J_1 &\ll \int_{\frac{1}{2}+\epsilon-iT}^{\frac{1}{2}+\epsilon+iT}\left|\zeta(\tfrac{1}{2}+\epsilon+it)\prod_{n=1}^{j}L(\tfrac{1}{2}+\epsilon+it,\mathrm{sym}^{2n}f)\right|\frac{x^{\frac{1}{2}+\epsilon}}{|t|}dt \\
&\ll x^{\frac{1}{2}+\epsilon} + x^{\frac{1}{2}+\epsilon}\int_{1 \leq |t| \leq T}\left|\zeta(\tfrac{1}{2}+\epsilon+it)\prod_{n=1}^{j}L(\tfrac{1}{2}+\epsilon+it,\mathrm{sym}^{2n}f)\right|\frac{1}{|t|}dt \\
&\ll  x^{\frac{1}{2}+\epsilon} + x^{\frac{1}{2}+\epsilon} \log T \text{ } \sup_{1 \leq T_1 \leq T}\frac{1}{T_1} \int_{T_1}^{2T_1}\left|\zeta(\tfrac{1}{2}+\epsilon+it)\prod_{n=1}^{j}L(\tfrac{1}{2}+\epsilon+it,\mathrm{sym}^{2n}f)\right| dt \\
&= x^{\frac{1}{2}+\epsilon} + x^{\frac{1}{2}+\epsilon}I_2,
\end{align*}

where the bounds of $I_2$ are given as follows.

\begin{align}
    I_2 &=\log T \sup_{1 \leq T_1 \leq T} \frac{1}{T_1} \int_{T_1}^{2T_1}\left|\zeta(\tfrac{1}{2}+\epsilon+it)\prod_{n=1}^{j}L(\tfrac{1}{2}+\epsilon+it,\mathrm{sym}^{2n}f)\right| dt \\
    &\ll T^\epsilon \sup_{1 \leq T_1 \leq T}\frac{1}{T_1}\left(  \int_{T_1}^{2T_1}|\zeta(\frac{1}{2}+\epsilon+it)|^{4}dt \right)^\frac{1}{4}\left(\int_{T_1}^{2T_1}|L(\frac{1}{2}+\epsilon+it, \mathrm{sym}^{2}f)|^4dt\right)^\frac{1}{4} \\
    & \times \left( \int_{T_1}^{2T_1}\left| \prod_{n=2}^{j}L(\frac{1}{2}+\epsilon+it, \mathrm{sym}^{2n} f) \right|^{2}dt\right)^{\frac{1}{2}}\\
    &\ll T^{\frac{1}{4}+\epsilon + 6(\frac{1}{2}+\epsilon)\frac{1}{4}+ ((j+1)^2-4)(\frac{1}{2}-\epsilon)\frac{1}{2}-1} = T^{\frac{(j+1)^2}{4}-1+\epsilon}.
\end{align}

Thus, we have \begin{align}
    J_1+J_2+J_3 \ll \frac{x^{1+\epsilon}}{T} + x^{\frac{1}{2}+\epsilon}T^{\frac{(j+1)^2}{4}-1+\epsilon}.
\end{align}

Thus \begin{align}
    \sum_{n\leq x}\lambda_{\mathrm{sym}^{j}f}^2(n) = a_{j,f,0}(1)x + O \left( \frac{x^{1+\epsilon}}{T}+x^{\frac{1}{2}+\epsilon} T^{\frac{(j+1)^2}{4}-1+\epsilon} \right).
\end{align}

Now put $T=x^{\frac{2}{(j+1)^2}}$, then we have \begin{align}
    \sum_{\substack{n\leq x}}\lambda_{\mathrm{sym}^jf}^2(n) &= c_{0,j,f}(1)x + O \left(x^{1-\frac{2}{(j+1)^2}+\epsilon}\right),
\end{align}

where $a_{j,f,0}(1)$ is given by \eqref{ajf0}.

\section{Proof of Theorem \ref{thm:main1}} \label{sec:main1}

We consider the sum $\sum_{n\leq x}\lambda_{\mathrm{sym}^{j}f}^2(n)l_1(n)$. We begin by applying Perron's formula to $F_{j}^{*(1)}(s)$ with $\eta=1+\epsilon$, and $10\leq T\leq x$. Thus we have,
\begin{align*}
\sum_{n\leq x}\lambda_{\mathrm{sym}^{j}f}^2(n)l_1(n)
&= \frac{1}{2\pi i}\int_{\eta-iT}^{\eta+iT}F_{j}^{*(1)}(s)\frac{x^{s}}{s}ds + O\left(\frac{x^{1+\epsilon}}{T}\right).
\end{align*}

We move the line of integration to $\Re(s)=\frac{1}{2}+\epsilon$, and by Cauchy's residue theorem, there is only one simple pole at $s=1$ due to the factor $\zeta(s)$, we get from $F^{*(1)}_{j}(s)$ in the Lemma \ref{lem:F1*}.

This contributes a residue, which is $c_{j,f}(1)x$, where 
\begin{align} \label{eq:cjf}
c_{j,f}(1) &= L(1,\chi_4)\prod_{n=1}^{j}L(1,\mathrm{sym}^{2n}f)L(1,\mathrm{sym}^{2n}f\otimes\chi_4)H_{j}^{*(1)}(1).
\end{align}

So, we obtain
\begin{align*}
\sum_{n\leq x}\lambda_{\mathrm{sym}^{j}f}^2(n)l_1(n) &= c_{j,f}(1)x + \frac{1}{2\pi i}\left\{\int_{\frac{1}{2}+\epsilon-iT}^{\frac{1}{2}+\epsilon+iT}+\int_{1+\epsilon-iT}^{\frac{1}{2}+\epsilon-iT}+\int_{\frac{1}{2}+\epsilon+iT}^{1+\epsilon+iT}\right\}F_{j}^{*(1)}(s)\frac{x^{s}}{s}ds \\
&\quad + O\left(\frac{x^{1+\epsilon}}{T}\right) \\
&= c_{j,f}(1)x + \frac{1}{2\pi i}(J_{1}+J_{2}+J_{3})+O\left(\frac{x^{1+\epsilon}}{T}\right), \quad \text{(say)}.
\end{align*}

Contribution of horizontal line integrals ($J_{2}$ and $J_{3}$) in absolute value (using Lemmas \ref{lem:F1*}, \ref{lem:zetabound}, \ref{lem:LSymFChi}, \ref{lem:LSigmaChi} and \ref{lem:genLbound}) is

\begin{align*}
    J_2+J_3 &\ll \left| \left( \int_{1+\epsilon-iT}^{\frac{1}{2}+\epsilon-iT}+\int_{\frac{1}{2}+\epsilon+iT}^{1+\epsilon+iT} \right)F_j^{*(1)}(s)\frac{x^s}{s} ds \right| \\
    &\ll  \int_{\frac{1}{2}+\epsilon}^{1+\epsilon} \left|\zeta(\sigma+iT)L(\sigma+iT, \chi_4)\prod_{n=1}^{j}L(\sigma+iT, \mathrm{sym}^{2n} f)L(\sigma+iT, \mathrm{sym}^{2n} f \otimes \chi_4 )\right|\frac{1}{T}x^\sigma d\sigma \\
    &\ll \frac{1}{T}\max_{\frac{1}{2}+\epsilon \leq \sigma \leq 1+\epsilon} x^\sigma T^{(\frac{13}{42}+\frac{1}{3}+\frac{6}{5}+\frac{67}{46}+2\sum_{2\leq n\leq j}\frac{2n+1}{2})(1-\sigma)+\epsilon}\\
    &\ll \frac{1}{T}\max_{\frac{1}{2}+\epsilon \leq \sigma \leq 1+\epsilon}x^\sigma T^{\left((j+1)^2-\frac{564}{805} \right )(1-\sigma)+\epsilon}.
\end{align*}

The above function involving $\sigma$ is monotonic, so the maximum happens at the end points. We treat both boundary points as upper bounds. Thus, we have 
\begin{align*}
    J_2+J_3 &\ll \frac{x^{1+\epsilon}}{T} + x^{\frac{1}{2}+\epsilon}T^{\frac{(j+1)^2}{2}-\frac{1087}{805}+\epsilon} .
\end{align*}

Contribution of the left vertical line integral ($J_{1}$) in absolute value (using Lemmas \ref{lem:F1*}, \ref{lem:LSigmaChi}, \ref{eq:intLSigmafChi} \ref{lem:zetabound}, \ref{lem:genLbound} and H\"older's inequality) is

\begin{align*}
J_1 &\ll \int_{\frac{1}{2}+\epsilon-iT}^{\frac{1}{2}+\epsilon+iT}\left|\zeta(\tfrac{1}{2}+\epsilon+it)L(\tfrac{1}{2}+\epsilon+it, \chi_4)\right| \\
& \times \left|\prod_{n=1}^{j}L(\tfrac{1}{2}+\epsilon+it,\mathrm{sym}^{2n}f )L(\tfrac{1}{2}+\epsilon+it,\mathrm{sym}^{2n}f \otimes \chi_4 )\right|\frac{x^{\frac{1}{2}+\epsilon}}{|t|}dt \\
&\ll x^{\frac{1}{2}+\epsilon} + x^{\frac{1}{2}+\epsilon}\int_{1 \leq |t| \leq T}\left|\zeta(\tfrac{1}{2}+\epsilon+it)L(\tfrac{1}{2}+\epsilon+it, \chi_4) \right| \\ 
& \times \left|\prod_{n=1}^{j}L(\tfrac{1}{2}+\epsilon+it,\mathrm{sym}^{2n}f)L(\tfrac{1}{2}+\epsilon+it,\mathrm{sym}^{2n}f \otimes \chi_4 )\right|\frac{1}{|t|}dt \\
&\ll  x^{\frac{1}{2}+\epsilon} + x^{\frac{1}{2}+\epsilon} \log T \text{ } \sup_{1 \leq T_1 \leq T}\frac{1}{T_1} \int_{T_1}^{2T_1}\left|\zeta(\tfrac{1}{2}+\epsilon+it)L(\tfrac{1}{2}+\epsilon+it, \chi_4) \right| \\ 
& \times \left|\prod_{n=1}^{j}L(\tfrac{1}{2}+\epsilon+it,\mathrm{sym}^{2n}f)L(\tfrac{1}{2}+\epsilon+it,\mathrm{sym}^{2n}f \otimes \chi_4 )\right| dt \\
&= x^{\frac{1}{2}+\epsilon} + x^{\frac{1}{2}+\epsilon}I_2,
\end{align*}
where an upper bounds for $I_2$ is given as follows.

\begin{align}
    I_2 &=\log T \text{ } \sup_{1 \leq T_1 \leq T}\frac{1}{T_1} \int_{T_1}^{2T_1}\left|\zeta(\tfrac{1}{2}+\epsilon+it)L(\tfrac{1}{2}+\epsilon+it, \chi_4) \right| \\ 
    & \times \left|\prod_{n=1}^{j}L(\tfrac{1}{2}+\epsilon+it,\mathrm{sym}^{2n}f)L(\tfrac{1}{2}+\epsilon+it,\mathrm{sym}^{2n}f \otimes \chi_4 )\right| dt \\
    &\ll T^\epsilon \sup_{1 \leq T_1 \leq T}\frac{1}{T_1}\left(  \int_{T_1}^{2T_1}|\zeta(\frac{1}{2}+\epsilon+it)|^{4}dt \right)^\frac{1}{4}\left(\int_{T_1}^{2T_1}|L(\tfrac{1}{2}+\epsilon+it, \chi_4)|^4dt\right)^\frac{1}{4} \\
    & \times \left( \int_{T_1}^{2T_1}\left| \prod_{n=1}^{j}L(\tfrac{1}{2}+\epsilon+it,\mathrm{sym}^{2n}f)L(\tfrac{1}{2}+\epsilon+it,\mathrm{sym}^{2n}f \otimes \chi_4 ) \right|^{2}dt\right)^{\frac{1}{2}}\\
    &\ll \frac{1}{T_1}T_1^{(\frac{1}{4}+\epsilon) + (\frac{1}{4}+\epsilon)+ 2 \sum_{1 \leq n \leq j}(2n+1)(\frac{1}{2}-\epsilon)\frac{1}{2}}\\
    &=  T^{\frac{(j+1)^2}{2}-1+\epsilon}.
\end{align}

So we have \begin{align}
    J_1 \ll x^{\frac{1}{2}+\epsilon} + x^{\frac{1}{2}+\epsilon}T^{\frac{(j+1)^2}{2}-1+\epsilon}
\end{align}

and \begin{align}
    J_1+J_2+J_3 \ll \frac{x^{1+\epsilon}}{T}+ x^{\frac{1}{2}+\epsilon} T^{\frac{(j+1)^2}{2}-1+\epsilon}.
\end{align}

Thus \begin{align}
    \sum_{n\leq x}\lambda_{\mathrm{sym}^{j}f}^2(n)l_1(n) = c_{j,f}(1)x + O \left( \frac{x^{1+\epsilon}}{T}+ x^{\frac{1}{2}+\epsilon} T^{\frac{(j+1)^2}{2}-1+\epsilon} \right).
\end{align}

After we put $T = x^{\frac{1}{(j+1)^2}}$, we have \begin{align}
    \sum_{n\leq x}\lambda_{\mathrm{sym}^{j}f}^2(n)l_1(n) = c_{j,f}(1)x + O \left( x^{1-\frac{1}{(j+1)^2}+\epsilon} \right).
\end{align}

Now from \eqref{eq:r_2&l_1}, we have our result.

\section{Proof of Theorem \ref{thm:main1(468)}} \label{sec:main1(468)}

We first calculate $\sum_{n\leq x}\lambda_{\mathrm{sym}^{j}f}^2(n)l_2(n)$. We begin by applying Perron's formula to $F_{j}^{*(2)}(s)$ with $\eta=2+\epsilon$, and $10\leq T\leq x$. Thus we have,
\begin{align*}
\sum_{n\leq x}\lambda_{\mathrm{sym}^{j}f}^2(n)l_2(n)
&= \frac{1}{2\pi i}\int_{\eta-iT}^{\eta+iT}F_{j}^{*(2)}(s)\frac{x^{s}}{s}ds + O\left(\frac{x^{2+\epsilon}}{T}\right).
\end{align*}

We move the line of integration to $\Re(s)=\frac{3}{2}+\epsilon$, and by Cauchy's residue theorem, there is only one simple pole at $s=2$ due to the factor $\zeta(s)$, we get from $F^{*2}_{j}(s)$ in the Lemma \ref{lem:F2*}. We have,

\begin{align}
    \sum_{n\leq x}\lambda_{\mathrm{sym}^{j}f}^2(n)l_2(n) & = a_{j,f,2}(2)x^2+\frac{1}{2\pi i}\left\{\int_{\frac{3}{2}+\epsilon-iT}^{\frac{3}{2}+\epsilon+iT}+\int_{2+\epsilon-iT}^{\frac{3}{2}+\epsilon-iT}+\int_{\frac{3}{2}+\epsilon+iT}^{2+\epsilon+iT}\right\}F_{j}^{*(2)}(s)\frac{x^{s}}{s}ds \\
    &\quad + O\left(\frac{x^{2+\epsilon}}{T}\right) \\
    &= a_{j,f,2}(2)x^2+\frac{1}{2\pi i}(J_{1}+J_{2}+J_{3})+O\left(\frac{x^{2+\epsilon}}{T}\right) \quad \text{(say)} ,
\end{align}

where \begin{align} \label{ajf2}
    a_{j,f,2}(2) &= \frac{1}{2}\zeta(2)\prod_{n=1}^{j}L(2,\mathrm{sym}^{2n}f)L(1,\mathrm{sym}^{2n}f)H_j^2(2).
\end{align}

Now using the lemmas \ref{lem:F2*}, \ref{lem:zetabound}, \ref{lem:LSymFChi}, \ref{lem:genLbound}, we have

\begin{align}
    J_2+J_3 &\ll \left| \left( \int_{2+\epsilon-iT}^{\frac{3}{2}+\epsilon-iT}+\int_{\frac{3}{2}+\epsilon+iT}^{2+\epsilon+iT} \right)F_j^{*(2)}(s)\frac{x^s}{s}\right| \\
    &\ll  \int_{\frac{3}{2}+\epsilon}^{2+\epsilon} \left|\zeta(\sigma-1+iT)\prod_{n=1}^{j}L(\sigma+iT-1, \mathrm{sym}^{2n} f)\right|\frac{1}{T}x^\sigma d\sigma \\
    &= \int_{\frac{1}{2}+\epsilon}^{1+\epsilon} \left|\zeta(\sigma+iT)\prod_{n=1}^{j}L(\sigma +iT, \mathrm{sym}^{2n} f)\right|\frac{x^{\sigma+1}}{T}d\sigma \\
    &\ll \frac{x}{T}\max_{\frac{1}{2}+\epsilon \leq \sigma \leq 1+\epsilon} x^\sigma T^{(\frac{13}{42}+\frac{6}{5}+\sum_{2\leq n\leq j}\frac{2n+1}{2})(1-\sigma)+\epsilon}\\
    &\ll \frac{x}{T}\max_{\frac{1}{2}+\epsilon \leq \sigma \leq 1+\epsilon}x^\sigma T^{(\frac{(j+1)^2}{2}-\frac{103}{210})(1-\sigma)+\epsilon}.
\end{align}

The above function involving $\sigma$ is monotonic, so the maximum happens at the extreme points. We treat both boundary points as upper bounds.

\begin{align*}
    J_2+J_3 &\ll \frac{x}{T}\left[ x^{(1+\epsilon)} T^{(\frac{(j+1)^2}{2}-\frac{103}{210})(\epsilon)+\epsilon} +x^{(\frac{1}{2}+\epsilon)} T^{(\frac{(j+1)^2}{2}-\frac{103}{210})(\frac{1}{2}-\epsilon)+\epsilon}\right] \\
    &\ll \frac{x^{2+\epsilon}}{T} + x^{\frac{3}{2}+\epsilon}T^{\frac{(j+1)^2}{4}-\frac{523}{420}+\epsilon} .
\end{align*}

Contribution of the left vertical line integral ($J_{1}$) in absolute value (using Lemmas \ref{lem:F2*}, \ref{lem:LSigmaChi}, \ref{lem:zetabound}, \ref{lem:LSymFChi}, \ref{lem:genLbound} and H\"older's inequality) is
\begin{align*}
J_1 &\ll \int_{\frac{3}{2}+\epsilon-iT}^{\frac{3}{2}+\epsilon+iT}\left|\zeta(\tfrac{1}{2}+\epsilon+it)\prod_{n=1}^{j}L(\tfrac{1}{2}+\epsilon+it,\mathrm{sym}^{2n}f)\right|\frac{x^{\frac{3}{2}+\epsilon}}{|t|}dt \\
&\ll x^{\frac{3}{2}+\epsilon} + x^{\frac{3}{2}+\epsilon}\int_{1 \leq |t| \leq T}\left|\zeta(\tfrac{1}{2}+\epsilon+it)\prod_{n=1}^{j}L(\tfrac{1}{2}+\epsilon+it,\mathrm{sym}^{2n}f)\right|\frac{1}{|t|}dt \\
&\ll  x^{\frac{3}{2}+\epsilon} + x^{\frac{3}{2}+\epsilon} \log T \text{ } \sup_{1 \leq T_1 \leq T}\frac{1}{T_1} \int_{T_1}^{2T_1}\left|\zeta(\tfrac{1}{2}+\epsilon+it)\prod_{n=1}^{j}L(\tfrac{1}{2}+\epsilon+it,\mathrm{sym}^{2n}f)\right| dt \\
&= x^{\frac{3}{2}+\epsilon} + x^{\frac{3}{2}+\epsilon}I_2,
\end{align*}

where the bounds of $I_2$ is given by as follows.

\begin{align}
    I_2 &=\log T \sup_{1 \leq T_1 \leq T} \frac{1}{T_1} \int_{T_1}^{2T_1}\left|\zeta(\tfrac{1}{2}+\epsilon+it)\prod_{n=1}^{j}L(\tfrac{1}{2}+\epsilon+it,\mathrm{sym}^{2n}f)\right| dt \\
    &\ll T^\epsilon \sup_{1 \leq T_1 \leq T}\frac{1}{T_1}\left(  \int_{T_1}^{2T_1}|\zeta(\frac{1}{2}+\epsilon+it)|^{4}dt \right)^\frac{1}{4}\left(\int_{T_1}^{2T_1}|L(\frac{1}{2}+\epsilon+it, \mathrm{sym}^{2}f)|^4dt\right)^\frac{1}{4} \\
    & \times \left( \int_{T_1}^{2T_1}\left| \prod_{n=2}^{j}L(\frac{1}{2}+\epsilon+it, \mathrm{sym}^{2n} f) \right|^{2}dt\right)^{\frac{1}{2}}\\
    &\ll T^{\frac{1}{4}+\epsilon + 6(\frac{1}{2}+\epsilon)\frac{1}{4}+ ((j+1)^2-4)(\frac{1}{2}-\epsilon)\frac{1}{2}-1} = T^{\frac{(j+1)^2}{4}-1+\epsilon}.
\end{align}

Thus, we have \begin{align}
    J_1+J_2+J_3 \ll \frac{x^{2+\epsilon}}{T} + x^{\frac{3}{2}+\epsilon}T^{\frac{(j+1)^2}{4}-1+\epsilon}.
\end{align}

Thus \begin{align}
    \sum_{n\leq x}\lambda_{\mathrm{sym}^{j}f}^2(n)l_2(n) = a_{j,f,2}(2)x^2 + O \left( \frac{x^{2+\epsilon}}{T}+x^{\frac{3}{2}+\epsilon} T^{\frac{(j+1)^2}{4}-1+\epsilon} \right).
\end{align}

Now put $T=x^{\frac{2}{(j+1)^2}}$, then we have \begin{align}
    \sum_{\substack{n\leq x}}\lambda_{\mathrm{sym}^jf}^2(n)l_2(n) &= c_{j,f,2}(2)x^{2} + O \left(x^{2-\frac{2}{(j+1)^2}+\epsilon}\right),
\end{align}

where $c_{j,f,2}(2)$ is given by \eqref{ajf2}.

Now we will consider the sum of $6$ squares, and we will calculate the parts $\sum_{\substack{n\leq x}}\lambda_{\mathrm{sym}^jf}^2(n)l_3(n)$ and $\sum_{\substack{n\leq x}}\lambda_{\mathrm{sym}^jf}^2(n)v_3(n)$. Using Perron's formula to $F_{j_1}^{*(3)}(s)$ with $\eta = 3+\epsilon$ and after moving the line of integration to $\eta = \frac{5}{2}+\epsilon$, we have  
\begin{align}
    \sum_{n\leq x}\lambda_{\mathrm{sym}^{j}f}^2(n)l_3(n) & = a_{j,f,3}(3)x^3+\frac{1}{2\pi i}\left\{\int_{\frac{5}{2}+\epsilon-iT}^{\frac{5}{2}+\epsilon+iT}+\int_{3+\epsilon-iT}^{\frac{5}{2}+\epsilon-iT}+\int_{\frac{5}{2}+\epsilon+iT}^{3+\epsilon+iT}\right\}F_{j_1}^{*(3)}(s)\frac{x^{s}}{s}ds \\
    &\quad + O\left(\frac{x^{3+\epsilon}}{T}\right) \\
    &= a_{j,f,3}(3)x^2+\frac{1}{2\pi i}(J_{1}+J_{2}+J_{3})+O\left(\frac{x^{3+\epsilon}}{T}\right) \quad \text{(say)} ,
\end{align}

where \begin{align} \label{ajf3}
    a_{j_1,f,3}(3) &= \frac{1}{3}L(3,\chi_4)\prod_{n=1}^{j}L(3,\mathrm{sym}^{2n}f \otimes \chi_4)L(1,\mathrm{sym}^{2n}f)H_j^3(3).
\end{align}

Now, using Lemmas\ref{lem:zetabound}, \ref{lem:LSymFChi}, \ref{lem:genLbound} and proceeding as before, we have 
\begin{align} \label{eq:convSuml3}
    \sum_{\substack{n\leq x}}\lambda_{\mathrm{sym}^jf}^2(n)l_3(n) &= a_{j_1,f,3}(3)x^3 + O \left( \frac{x^{3+\epsilon}}{T}+ {x^{\frac{5}{2}+\epsilon}T^{\frac{(j+1)^2}{4}-1+\epsilon}} \right).
\end{align}

Again using Perron's formula to $F_{j_2}^{*(3)}(s)$ with $\eta = 3+\epsilon$ and after moving the line of integration to $\eta = \frac{5}{2}+\epsilon$, we have  
\begin{align}
    \sum_{n\leq x}\lambda_{\mathrm{sym}^{j}f}^2(n)v_3(n) & = \frac{1}{2\pi i}\left\{\int_{\frac{5}{2}+\epsilon-iT}^{\frac{5}{2}+\epsilon+iT}+\int_{3+\epsilon-iT}^{\frac{5}{2}+\epsilon-iT}+\int_{\frac{5}{2}+\epsilon+iT}^{3+\epsilon+iT}\right\}F_{j_2}^{*(3)}(s)\frac{x^{s}}{s}ds \\
    &\quad + O\left(\frac{x^{3+\epsilon}}{T}\right) \\
    &= \frac{1}{2\pi i}(J_{1}+J_{2}+J_{3})+O\left(\frac{x^{3+\epsilon}}{T}\right) \quad \text{(say)}.
\end{align}

Now using Lemmas\ref{lem:LSigmaChi}, \ref{lem:LSymFChi}, \ref{lem:genLbound} and proceeding as before, we have 
\begin{align} \label{eq:convSumv3}
    \sum_{\substack{n\leq x}}\lambda_{\mathrm{sym}^jf}^2(n)v_3(n) &= O \left( \frac{x^{3+\epsilon}}{T}+ {x^{\frac{5}{2}+\epsilon}T^{\frac{(j+1)^2}{4}-1+\epsilon}} \right).
\end{align}

Combining \eqref{eq:convSuml3}, \eqref{eq:convSumv3} and \eqref{eq:r_6&l_3&v_3}, we have \begin{align} 
    \sum_{\substack{n\leq x}}\lambda_{\mathrm{sym}^jf}^2(n)r_6(n) &= 16a_{j_1,f,3}(3)x^3 + O \left( \frac{x^{3+\epsilon}}{T}+ {x^{\frac{5}{2}+\epsilon}T^{\frac{(j+1)^2}{4}-1+\epsilon}} \right).
\end{align}

Now,  if we put $T = x^{\frac{2}{(j+1)^2}}$, then we have \begin{align}
    \sum_{n\leq x}\lambda_{\mathrm{sym}^{j}f}^2(n)r_{6}(n) = 16a_{j_1,f,3}(3)x^3 + O\left( x^{3+\epsilon-\frac{2}{(j+1)^2}} \right).
\end{align}

In the case of a sum of $8$ squares, we also proceed as before, and we have 
\begin{align}
    \sum_{\substack{n\leq x }}\lambda_{\mathrm{sym}^jf}^2(n)r_8(n) &= 16a_{j,f,4}(4)x^{4} + O \left( x^{4-\frac{2}{(j+1)^2}+\epsilon}\right).
\end{align}

Combinig everything, for $m=4,6,8$, we have 
\begin{align}
    \sum_{\substack{n\leq x }}\lambda_{\mathrm{sym}^jf}^2(n)r_m(n) &= C_{j,f,m}\left(\frac{m}{2}\right)x^{\frac{m}{2}} + O \left( x^{\frac{m}{2}-\frac{2}{(j+1)^2}+\epsilon} \right).
\end{align}

\section{Proof of Theorem \ref{thm:main(10,12)}} \label{sec:main(10,12)}

For $m=10$, we first consider the terms $\sum_{\substack{n\leq x}}\lambda_{\mathrm{sym}^jf}(n)l_5(n)$ and $\sum_{\substack{n\leq x}}\lambda_{\mathrm{sym}^jf}(n)v_5(n)$. Proceeding as in the case of the sum of $6$ squares, we will have \begin{align}
    \sum_{\substack{n\leq x}}\lambda_{\mathrm{sym}^jf}^2(n)l_5(n) &= a_{j_1,f,5}(5)x^5 + O\left( x^{5+\epsilon-\frac{2}{(j+1)^2}} \right),
\end{align}

where $a_{j_1,f,5}(5) = \frac{1}{5}L(5,\chi_4)\prod_{n=1}^{j}L(1,\mathrm{sym}^{2n} f)L(5,\mathrm{sym}^{2n}f\otimes\chi_4)H_{j}^{(5)}(5)$ and 

\begin{align}
    \sum_{\substack{n\leq x}}\lambda_{\mathrm{sym}^jf}^2(n)v_5(n) &= O\left( x^{5+\epsilon-\frac{2}{(j+1)^2}} \right).
\end{align}
We know $a_n = O(n^3)$ (see in~\cite{Grosswald}). Let $$\overline{F}_{j}^{(5)}(s) = \sum_{n=1}^{\infty}\frac{\lambda_{\mathrm{sym}^jf}(n)a_n}{n^s}.$$

we consider the sum $\sum_{n \leq x} \lambda_{\mathrm{sym}^j f}(n)a_n$, where we know that $a_n = O(n^3).$ There exists a $G \in \mathbb{N}$ such that $a_n \ll n^3$ for all $n > G$. We have 
\begin{align*}
    \sum_{n \leq x} \lambda_{\mathrm{sym}^j f}(n)a_n &= \left( \sum_{n \leq G} + \sum_{G < n \leq x} \right) \lambda_{\mathrm{sym}^j f}(n)a_n = O(1) + \sum_{G < n \leq x}\lambda_{\mathrm{sym}^j f}(n)n^3 .
\end{align*} 

Since $\sum_{G <n \leq x} \lambda_{\mathrm{sym}^j f}(n)n^3 = \left( \sum_{n \leq x} - \sum_{n \leq G} \right)\lambda_{\mathrm{sym}^j f}(n)n^3$, we have
\begin{align*}
    |\sum_{n \leq x}\lambda_{\mathrm{sym}^j f}(n)a_n| &\ll 1 + |\sum_{n\leq x}\lambda_{\mathrm{sym}^j f}(n)n^3| +|\sum_{n\leq G}\lambda_{\mathrm{sym}^j f}(n)n^3|.
\end{align*}
Next, we will evaluate the sum $\sum_{n\leq x}\lambda_{\mathrm{sym}^j f}(n)n^3$ using Perron's formula as above. Take $\eta = 4+\epsilon$ and $10 \leq T \leq x$. Then \begin{align*}
    \sum_{n\leq x}\lambda_{\mathrm{sym}^j f}(n)n^3 &= \frac{1}{2 \pi i}\int_{\eta - iT}^{\eta +iT}L(s-3, \mathrm{sym}^j f)\frac{x^s}{s}ds + O\left(\frac{x^{4+\epsilon}}{T}\right)\\
    &= \frac{1}{2 \pi i}\left\{ \int_{\frac{7}{2}+\epsilon-iT}^{\frac{7}{2}+\epsilon+iT} + \int_{4+\epsilon -iT}^{\frac{7}{2}+\epsilon-iT} + \int_{\frac{7}{2}+\epsilon+iT}^{4+\epsilon +iT}\right\}L(s-3, \mathrm{sym}^j f)\frac{x^s}{s}ds + O\left(\frac{x^{4+\epsilon}}{T}\right)\\
    &= \frac{1}{2 \pi i} \left( J_1+J_2+J_3 \right) + O\left(\frac{x^{4+\epsilon}}{T}\right).
\end{align*} 

After evaluating the integrals as above, we have 
\begin{align}
    \sum_{n\leq x}\lambda_{\mathrm{sym}^j f}(n)n^3 &= O\left(\frac{x^{4+\epsilon}}{T}+x^{\frac{7}{2}+\epsilon} T^{\frac{j+1}{4}-\frac{1}{2}+\epsilon} \right) .
\end{align}

Now putting $T= x^\frac{2}{j+3}$, we have 
\begin{align}
    \sum_{n\leq x}\lambda_{\mathrm{sym}^j f}(n)n^3 &= O\left(x^{4-\frac{2}{j+3}+\epsilon} \right) .
\end{align}
Combining everything, we have 
\begin{align}
    \sum_{n\leq x}\lambda_{\mathrm{sym}^j f}(n)r_{10}(n) = O\left( x^{5+\epsilon - \frac{2}{j+3}} \right).
\end{align}

Again, note that $b_n = O(n^3\log\log n)$ (see~\cite{Grosswald}). Thus, proceeding as above, we have \begin{align}
    \sum_{n\leq x}\lambda_{\mathrm{sym}^j f}(n)r_{12}(n) = O\left( x^{6+\epsilon - \frac{2}{j+3}} \right).
\end{align}

\section{Proof of Theorem \ref{thm:main1(10,12)}} \label{sec:main1(10,12)}

As we have seen in the previous theorem, we only have to calculate the terms $\sum_{\substack{n\leq x}}\lambda_{\mathrm{sym}^jf}^2(n)l_5(n)$ and $\sum_{\substack{n\leq x}}\lambda_{\mathrm{sym}^jf}^2(n)v_5(n)$ for $m=5$. Proceeding as in the case of the sum of $6$ squares, we have 
\begin{align}
    \sum_{n\leq x}\lambda_{\mathrm{sym}^{j}f}^2(n)r_{10}(n) = \frac{64}{5}c_{j,f,5}(5)x^5 + O\left( x^{5+\epsilon-\frac{2}{(j+1)^2}} \right),
\end{align}
where $c_{j,f,5}(5) = \frac{1}{5}L(5,\chi_4)\prod_{n=1}^{j}L(1,\mathrm{sym}^{2n}f)L(5,\mathrm{sym}^{2n}f \otimes\chi_4)$.

Similarly, for $m=12$, we only have to calculate $\sum_{\substack{n\leq x}}\lambda_{\mathrm{sym}^jf}^2(n)l_6(n)$ and we have 
\begin{align}
    \sum_{n\leq x}\lambda_{\mathrm{sym}^{j}f}^2(n)r_{12}(n) = 8c_{j,f,6}(6)x^6 + O\left( x^{6+\epsilon-\frac{2}{(j+1)^2}} \right),
\end{align}
where $c_{j,f,6}(6) = \frac{1}{6}\zeta(6)\prod_{n=1}^{j}L(1,\mathrm{sym}^{2n}f)L(6,\mathrm{sym}^{2n}f)H_{j}^{(6)}(6)$. This proves our result.

\section{Proof of Theorem \ref{thm:mainsign}} \label{sec:mainsign}
Let $S(x) = \sum_{n \leq x} \lambda_{\mathrm{sym}^jf}(n)r_2(n)$ and  $h=x^{\delta_j}$ with $A(j)\coloneqq 1-\frac{1}{(j+1)^2}  < \delta_j < 1$. Now suppose that $\{ \lambda_{\mathrm{sym}^jf}(n) | n = a_1^2 + a_2^2, a_i \in \mathbb{Z} \}$ does not change any sign in the interval $n \in (x,x+h]$ and without loss of generality suppose that the sequence stays positive in the given interval.

Using the Theorem \ref{thm:main}, we have
\begin{align} \label{eq:uppbound}
    \sum_{x < n \leq x+h}\lambda_{\mathrm{sym}^jf}^2(n)r_2(n) &= \sum_{x < n \leq x+h}\lambda_{\mathrm{sym}^jf}(n)\lambda_{\mathrm{sym}^jf}(n)r_2(n) \\
    & \ll (x+h)^{\epsilon} \sum_{x < n \leq x+h}\lambda_{\mathrm{sym}^jf}(n)r_2(n) \quad (\text{as } \lambda_{\mathrm{sym}^jf}(n) \ll n^\epsilon \text{ for any } \epsilon > 0) \\
    &\ll x^\epsilon (|S(x+h)|+|S(x)|) \\
    & \ll x^{1-\frac{1}{j+1}+\epsilon},
\end{align}
for any $\epsilon>0$.

Now using the Theorem \ref{thm:main1}, we have 
\begin{align}
    \sum_{x < n \leq x+h}\lambda_{\mathrm{sym}^jf}^2(n)r_2(n) & = Ch + O\left( x^{1-\frac{1}{(j+1)^2}+\epsilon}\right) 
    = Cx^{\delta_j} + O\left( x^{1-\frac{1}{(j+1)^2}+\epsilon}\right).
\end{align}

Lemma \ref{dominating main term} ensures that 
\begin{align}\label{lower bound}
    \sum_{x < n \leq x+h}\lambda_{\mathrm{sym}^jf}^2(n)r_2(n)  \gg  x^{\delta_j}.
\end{align}
Combining \eqref{eq:uppbound} and \eqref{lower bound}, we obtain
$$x^{\delta_j }\ll x^{1-\frac{1}{j+1}+\epsilon}$$
as $x\to \infty$ for any $\epsilon >0.$ That is, $x^{\delta_j -A(j)-\epsilon}\ll 1$ as $x\to \infty$ for any $\epsilon >0.$ In particular, choosing $\epsilon=\frac{1}{2}(\delta_j -A(j))>0$, we obtain that $x^{\epsilon} \ll 1$ as $x \to \infty$, which is a contradiction.

This implies that there exists at least one sign change in the interval $(x, x+x^{\delta_j}]$, where $x$ is sufficiently large. Similarly, we can prove that there exists at least one sign change in $(x+x^{\delta_j}, x+2x^{\delta_j}]$ and so on.

Note that $2x = x+ x^{1-\delta_j}x^{\delta_j}$, and we have that there exists at least $x^{1-\delta_j}$ number of sign changes in the interval $(x,2x]$.

\section{Proof of Theorem \ref{thm:main1(468)sign}} \label{sec:main1(468)sign}

Let $S_m(x) = \sum_{n \leq x} \lambda_{\mathrm{sym}^jf}(n)r_m(n)$ and  $h=x^{\delta_j}$ with $1-\frac{2}{(j+1)^2}  < \delta_j < 1-\frac{1}{(j+1)^2}$. Now suppose that $\{ \lambda_{\mathrm{sym}^jf}(n) | n = \sum_{i=1}^{m}a_i^2, a_i \in \mathbb{Z} \}$ does not change any sign in the interval $n \in (x,x+h]$ and without loss of generality suppose that the sequence stays positive in the given interval.

Using Theorem \ref{thm:main(468)}, we have
\begin{align} \label{eq:uppbound1}
    \sum_{x < n \leq x+h}\lambda_{\mathrm{sym}^jf}^2(n)r_m(n) &= \sum_{x < n \leq x+h}\lambda_{\mathrm{sym}^jf}(n)\lambda_{\mathrm{sym}^jf}(n)r_m(n) \\
    & \ll (x+h)^{\epsilon} \sum_{x < n \leq x+h}\lambda_{\mathrm{sym}^jf}(n)r_m(n) \quad (\text{ as } \lambda_{\mathrm{sym}^jf}(n) \ll n^\epsilon, \text{ for all } \epsilon > 0) \\
    &\ll x^\epsilon (|S_m(x+h)|+|S_m(x)|) \\
    & \ll x^{\frac{m}{2}-\frac{2}{j+3}+\epsilon}.
\end{align}

Now using the Theorem \ref{thm:main1(468)}, we have 
\begin{align}
    \sum_{x < n \leq x+h}\lambda_{\mathrm{sym}^jf}^2(n)r_m(n) & = c_{j,f,m}(x+h)^\frac{m}{2}-c_{m,j,f}x^{\frac{m}{2}} + O\left( x^{\frac{m}{2}-\frac{2}{(j+1)^2}+\epsilon}\right) \\
    &= c_{j,f,m}(c_1x^{\frac{m}{2}-1+\delta_j}+c_2x^{\frac{m}{2}-2+2\delta_j} + \cdots +c_{\frac{m}{2}}x^{\frac{m}{2}\delta_j})+ O\left( x^{\frac{m}{2}-\frac{2}{(j+1)^2}+\epsilon}\right)\\
    &= c_{m,j,f}'x^{\frac{m}{2}-1+\delta_j} + O\left( x^{\frac{m}{2}-\frac{2}{(j+1)^2}+\epsilon}\right) \quad  \left(\text{ as } \delta_j < 1-\frac{1}{(j+1)^2} \right ),
\end{align}
where $c_{j,f,m}'$ is a constant, depending on $j,f$ and $m$.

Thus, by the Lemma \ref{dominating main term},  we have \begin{align} \label{eq:lowbound1}
    \sum_{x < n \leq x+h}\lambda_{\mathrm{sym}^jf}^2(n)r_m(n) \gg x^{\frac{m}{2}-1+\delta_j}.
\end{align}

Combining \eqref{eq:uppbound1} and \eqref{eq:lowbound1}, we have \begin{align}
    x^{\frac{m}{2}-1+\delta_j} \ll x^{\frac{m}{2}-\frac{2}{j+3}+\epsilon}.
\end{align}
This gives \begin{align}
    x^{\frac{2}{j+3}-\frac{2}{(j+1)^2}+\epsilon} \leq x^{\frac{m}{2}-1+\delta_j-\frac{m}{2}+\frac{2}{j+3}+\epsilon} \ll 1,
\end{align}
for any $\epsilon > 0$. This leads to a contradiction as $x \to +\infty$.

This implies that there exists at least one sign change in the interval $(x, x+x^{\delta_j}]$, where $x$ is sufficiently large. Similarly, we can prove that there exists at least one sign change in the interval $(x+x^{\delta_j}, x+2x^{\delta_j}]$ and so on. Thus, we have that there exists at least $x^{1-\delta_j}$ number of sign changes in the interval $(x,2x]$.

\bibliographystyle{plain}
\bibliography{references}

\end{document}